\newcounter{thm}
\newtheorem{theorem}[thm]{Theorem}
\newtheorem*{theorem*}{Theorem}
\newtheorem{lemma}[thm]{Lemma}
\newtheorem{techlemma}[thm]{Technical Lemma}
\newtheorem{proposition}[thm]{Proposition}
\newtheorem*{conjecture*}{Conjecture}
\newtheorem{corollary}[thm]{Corollary}
\newtheorem{remark}[thm]{Remark}
\theoremstyle{definition}
\newtheorem{definition}[thm]{Definition}
\renewcommand{\Re}{\mathop{\mathrm{Re}}}
\renewcommand{\Im}{\mathop{\mathrm{Im}}}
\newcommand{\dist}{\operatorname{dist}}
\newcommand{\eps}{\varepsilon}
\newcommand{\bbC}{\mathbb C}
\newcommand{\CC}{\mathbb C}
\newcommand{\NN}{\mathbb N}
\newcommand{\RR}{\mathbb R}
\newcommand{\TT}{{\mathbb R/\mathbb Z}}
\newcommand{\ZZ}{\mathbb Z}
\newcommand{\bbR}{\mathbb R}
\newcommand{\bbQ}{\mathbb Q}
\newcommand{\bbD}{\mathbb D}
\newcommand{\bbZ}{\mathbb Z}
\newcommand{\bbA}{\mathbb A}
\newcommand{\WW}{\mathbb W}
\newcommand{\cB}{\mathcal B}
\newcommand{\cD}{\mathcal D}
\newcommand{\cR}{\mathcal R}
\newcommand{\cU}{\mathcal U}
\newcommand{\cV}{\mathcal V}
\newcommand{\cT}{\mathcal T}
\newcommand{\rot}{\rho}
\renewcommand{\dist}{\mathrm{dist}\,}
\renewcommand{\mod}{\operatorname{mod}}
\numberwithin{thm}{section}
\author{Nataliya Goncharuk, Michael Yampolsky}
\title[Analytic linearization]{Analytic linearization of conformal maps of the annulus}
\begin{document}
\begin{abstract}
We consider holomorphic  maps defined in an annulus around $\bbR/\bbZ$ in $\bbC/\bbZ$.
E.~Risler proved that in a generic analytic family of such maps $f_\zeta$ that contains a Brjuno rotation $f_0(z)=z+\alpha$, all maps that are conjugate to this rotation form a codimension-1 analytic submanifold near $f_0$.

 In this paper, we obtain the Risler's result as a  corollary of the following construction. We introduce a renormalization operator on the space of univalent maps in a neighborhood of $\bbR/\bbZ$. We prove that this operator is hyperbolic, with one unstable direction corresponding to translations. We further use a holomorphic motions argument and Yoccoz's theorem to show that its stable foliation consists of diffeomorphisms that are conjugate to rotations.
\end{abstract}

\maketitle

\section{Introduction}

\subsection{Linearization of analytic diffeomorphisms close to rotations}
We denote $\{x\}$, $[x]$ the fractional and the integer parts of a real number $x$ respectively.
For $\alpha\in(0,1)$, let $$G(\alpha)=\left\{\frac{1}{\alpha} \right\}$$
denote the Gauss map. We set
$$\alpha_{-1}\equiv 1,\;\alpha_0\equiv\alpha,\ldots,\alpha_n=G(\alpha_{n-1}),\ldots;$$
this sequence is infinite if and only if $\alpha\notin \bbQ$, otherwise, we will end it at the last non-zero term.
Note that numbers $a_n=[1/\alpha_{n}]$, $n\geq 0$ are the coefficients of a continued fraction expansion of $\alpha$ with positive terms (which is unique  if $\alpha\notin \bbQ$).
To save space, we will abbreviate this continued fraction as
$$\alpha=[a_0,a_1,\ldots].$$
As usual, $p_n/q_n$ will denote the $n$-th convergent of the continued fraction of $\alpha$:
$$\frac{p_n}{q_n}=[a_0,\ldots,a_{n-1}].$$
An irrational number $\alpha\in(0,1)$ is a Brjuno number if the following sum converges:
\begin{equation}
  \label{BY-function}
  \Phi(\alpha)=\sum_{n\geq 0}\alpha_{-1}\alpha_0\cdots\alpha_{n-1}\log\frac{1}{\alpha_n}.
  \end{equation}
This sum is known as Yoccoz-Brjuno function \cite{Yoc}. Its convergence is equivalent to that of the original Brjuno function \cite{Bru}:
\begin{equation}
  \label{B-function}
  \Phi_0(\alpha)=\sum_{n\geq 0}\frac{\log{q_{n+1}}}{q_n},
  \end{equation}
  and the difference $|\Phi(\alpha)-\Phi_0(\alpha)|$ is bounded by a universal constant.

We will denote the collection of Brjuno numbers by $\cB\subset(0,1)$.

Let $\mathcal B_C = \{\alpha\in \mathcal B \mid \Phi(\alpha)<C\}$.
  For a positive decreasing sequence $\{s_k\}$ that tends to zero, let $\mathcal B_{\{s_k\}}$ be the set of Brjuno numbers such that for all $k$, $$\Phi^k(\alpha) = \sum_{n\geq k}\alpha_{-1}\alpha_0\cdots\alpha_{n-1}\log\frac{1}{\alpha_n}<s_k.$$ Then $\mathcal B$ is a union of $\mathcal B_{\{s_k\}}$ over all decreasing sequences $\{s_k\}$ that tend to zero.

We will denote by $\rho(f)$ the rotation number of  a circle homeomorphism $f:\bbR/\bbZ\to\bbR/\bbZ$.
We let
$$R_\alpha(z)\equiv (z+\alpha)\mod \bbZ,$$
where $\alpha\in\bbC$.
Let  $$\Pi_{\eps}:=\{z\in \bbC/\bbZ \mid |\Im z|<\eps\}$$ be  an annulus in $\bbC/\bbZ$ around $\bbR/\bbZ$ of width $2\eps$.
%We consider univalent analytic maps $f\colon \Pi_\eps \to \bbC/\bbZ$ that preserve the homotopy class of $\bbR/\bbZ$. We do not assume that $f$ preserves the real line.

The following theorem was proved by E.~Risler in \cite{Risler}:
  \begin{theorem}[{\bf Risler}]
  \label{th-Ris}
  Let $\alpha\in\cB$. Let $f_\mu$, $\mu\in \bbD^n_r(0)\subset \bbC^n$ be a generic analytic family of univalent maps
  $$f_\mu:\Pi_\eps\to\CC/\ZZ$$
  satisfying the condition $f_0=R_\alpha$. Then there exists $\kappa>0$ and an analytic submanifold $M\ni 0$ of $\bbD_r^n(0)$ whose codimension is equal to $1$ (i.e. $\dim M=n-1$) such that for all $\mu\in \bbD^n_\kappa(0)\cap M$ there exists an analytic homeomorphism
  $h_\mu:\Pi_{\frac{\eps}{2}}\to\CC/\ZZ$ with the property
  $$h_\mu\circ f_\mu\circ h_\mu^{-1}=R_\alpha.$$
 Moreover, if $f_\mu$ is analytically conjugate to $R_\alpha$ in a substrip of $\Pi_{\eps/2}$ and $\mu$ is sufficiently small, then $f_\mu \in M$.
  \end{theorem}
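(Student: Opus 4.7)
The plan is to deduce Theorem~\ref{th-Ris} from the hyperbolic renormalization picture promised in the abstract. The first step is to define a renormalization operator $\cR$ on a Banach space $\cU$ of univalent maps $f\colon\Pi_\eps\to\CC/\ZZ$ that are close to a rotation. In analogy with Yoccoz's renormalization for circle diffeomorphisms, the construction uses a first-return map: for $f$ whose rotation number is near $\alpha=[a_0,a_1,\ldots]$, one restricts $f$ to a fundamental domain adapted to the first term of the continued fraction, glues its sides by iterates of $f$ to obtain an annulus biholomorphic to some $\Pi_{\eps'}$, and normalizes the resulting dynamics back into $\cU$. By construction, $\cR$ sends $R_\alpha$ to $R_{G(\alpha)}$, so the rigid rotations $\{R_{\alpha_n}\}$ form a renormalization orbit rather than a fixed point.

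The key analytic step is uniform hyperbolicity of $\cR$ along this orbit. The one unstable direction comes from infinitesimal translations $z\mapsto z+c$: since the $n$-th renormalization step rescales a fundamental domain of size $\sim\alpha_n$ back to unit size, such a perturbation is expanded by a factor $\sim 1/\alpha_n$ at each step. All other infinitesimal perturbations should be uniformly contracted; the heuristic is that after one $\cR$-step the non-translation part of $Df$ is evaluated on a strip of vanishing width and loses a definite factor via a Schwarz-type estimate. This produces a codimension-$1$ analytic stable manifold $W^s\subset\cU$ whose forward $\cR$-orbit shadows $\{R_{\alpha_n}\}$ exponentially.

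Next I would show that $W^s$ coincides locally with the set of maps that are analytically conjugate to $R_\alpha$ on a substrip of $\Pi_{\eps/2}$. For the inclusion $W^s\subseteq\{\text{conjugate to }R_\alpha\}$, take $f\in W^s$; the iterates $\cR^n f$ converge exponentially to $R_{\alpha_n}$, and a holomorphic motions argument based on Yoccoz's theorem---which linearizes $R_\alpha$ itself on some $\Pi_{\eps/2}$ and extends to nearby $f$ via the analytic dependence of $\cR$---telescopes these approximate conjugacies into a genuine analytic $h_f$ conjugating $f$ to $R_\alpha$ on a uniform substrip. For the reverse inclusion, if $f$ is analytically conjugate to $R_\alpha$ on a substrip, then by naturality of $\cR$ with respect to conjugacies, $\cR^n f$ is conjugate to $R_{\alpha_n}$ on corresponding substrips, and the hyperbolicity in the previous step forces $f$ to lie on $W^s$.

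Theorem~\ref{th-Ris} now follows by pulling back to parameter space. Genericity of the family $f_\mu$ is exactly the transversality at $\mu=0$ of $\mu\mapsto f_\mu$ to $W^s$; then $M:=\{\mu\in\bbD^n_r(0)\mid f_\mu\in W^s\}$ is an analytic codimension-$1$ submanifold through $0$, for $\mu$ in $\bbD^n_\kappa(0)\cap M$ with small $\kappa$ the previous step produces the required analytic conjugacy $h_\mu$ on $\Pi_{\eps/2}$, and the converse ``analytically conjugate $\Rightarrow\mu\in M$'' follows pointwise from the second inclusion in the previous paragraph. The main obstacle I anticipate is the uniform hyperbolicity along the Gauss orbit: one must quantify both the expansion on the translation direction and the contraction on its complement across strips whose widths depend delicately on $n$, and control the loss of conformal modulus at each renormalization step so that the construction still makes sense on a uniform substrip $\Pi_{\eps/2}$.
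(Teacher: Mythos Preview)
Your proposal is correct and follows essentially the same architecture as the paper: construct a first-return renormalization $\cR$, prove hyperbolicity with the single unstable direction given by translations, identify the local stable manifold with the conjugacy class of $R_\alpha$ via a holomorphic-motions argument combined with Yoccoz's theorem, and then pull back by transversality to the finite-dimensional family. The only point where the paper's mechanism is more specific than your sketch is the ``$W^s\subseteq\{\text{conjugate}\}$'' step: rather than telescoping approximate conjugacies, the paper runs the holomorphic motion on the orbit closure $\overline{\{f^k(0)\}}$ to produce an invariant quasicircle $\gamma_f$, uniformizes the two complementary cylinders, applies Yoccoz to the resulting real-analytic circle maps, and glues across $\gamma_f$ using conformal removability of quasicircles.
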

  The proof in \cite{Risler} is deeply motivated by the results of Yoccoz on analytic linearization of circle diffeomorphisms \cite{Yoccoz2002}. It  involves constructing a sequence of Yoccoz renormalizations of $f_\mu$; the renormalized maps are defined in progressively taller  cylinders,
  whose lifts to $\Pi_\eps$ are shown to contain a sub-annulus around $\RR/\ZZ$. This enables the construction of  an analytic chart that uniformizes $f_\mu$.
  Yoccoz \cite{Yoccoz2002} originally used this approach to get the following result:

  \begin{theorem}[{\bf Yoccoz}]
   \label{th-Yoc1}
   For every positive sequence  $\{s_k\}, \, s_k\to 0$, $\eps>0$ there exists $\kappa>0$ such that the following holds. Let $\alpha\in\cB_{\{s_k\}}$, and let $f$ be an analytic circle diffeomorphism with rotation number $\alpha$. Let $f$ extend conformally to $\Pi_\eps$, with
   $$|f(z)-R_\alpha(z)|<\kappa \quad \text{ for any }  z\in\Pi_\eps.$$
   Then there exists a conformal real-symmetric change of coordinates $h$ defined in $\Pi_{\frac{\eps}{2}}$ such that
   $$h\circ f\circ h^{-1}=R_\alpha.$$
  \end{theorem}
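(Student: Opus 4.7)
The plan is to iterate Yoccoz's renormalization of $f$ and extract the linearization $h$ as the limit of the uniformizing changes of coordinates at each level. Starting from $f_0 = f$ on $\Pi_{\eps_0}$ with $\eps_0 = \eps$ and $\rho(f_0) = \alpha$, I inductively construct $f_{n+1}$ as follows. Given $f_n$ on $\Pi_{\eps_n}$ with rotation number $\alpha_n$, I choose a fundamental crescent $C_n$ bounded by an almost-vertical arc $\gamma_n$ joining $-i\eps_n$ and $i\eps_n$ together with its image $f_n(\gamma_n)$; the arc is chosen using the $a_n$-th iterate of $f_n$ so that $C_n$ has horizontal width essentially $\alpha_n$. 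The quotient of $C_n$ under the gluing $z \sim f_n(z)$ for $z \in \gamma_n$ is a conformal cylinder, and the uniformizing map $\psi_n \colon C_n \to \Pi_{\eps_{n+1}}$ conjugates the first-return map of $f_n$ on $C_n$ to the next renormalization $f_{n+1}$, which has rotation number $\alpha_{n+1} = G(\alpha_n)$.

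Next, I would establish the quantitative estimate
$$\eps_{n+1} \ge \frac{\eps_n}{\alpha_n} - \log\frac{1}{\alpha_n} - \delta_n,$$
where $\delta_n$ is controlled by $\|f_n - R_{\alpha_n}\|_{\Pi_{\eps_n}}$. Rescaling by $\tilde\eps_n := \alpha_{-1}\alpha_0 \cdots \alpha_{n-1}\,\eps_n$ telescopes this to
$$\tilde\eps_n \ge \eps \;-\; \sum_{k=0}^{n-1} \alpha_{-1}\alpha_0\cdots\alpha_{k-1} \log\frac{1}{\alpha_k} \;-\; \sum_{k=0}^{n-1} \alpha_{-1}\alpha_0\cdots\alpha_{k-1}\, \delta_k,$$
whose arithmetic part is exactly the Yoccoz-Brjuno sum $\Phi(\alpha) \le C$. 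By contraction of the renormalization operator near a pure rotation, the errors $\delta_k$ are controlled by $\kappa$ and decay geometrically in $k$, so choosing $\kappa$ small in terms of $C$ and $\eps$ forces $\tilde\eps_n \ge \eps/2$ uniformly in $n$ and keeps each $f_n$ close to $R_{\alpha_n}$.

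With the tower in hand, let $\Psi_n = \psi_0 \circ \psi_1 \circ \cdots \circ \psi_{n-1}$, a real-symmetric conformal map that conjugates $f_n$ to a restriction of $f$. The uniform lower bound $\tilde\eps_n \ge \eps/2$ means that, after pulling back to the original scale, the domains of $\Psi_n^{-1}$ exhaust $\Pi_{\eps/2}$. A normal-family argument, combined with the fact that $f_n \to R_{\alpha_n}$ and with the uniqueness of analytic conjugacies to rotations, produces a limit $h$ on $\Pi_{\eps/2}$ satisfying $h \circ f \circ h^{-1} = R_\alpha$; real symmetry of $h$ is automatic since every $f_n$ and every $\psi_n$ is real-symmetric.

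The main obstacle is the sharp width estimate in the second step. Comparing the cylindrical quotient of $C_n$ to the model rectangle $(0,\alpha_n) \times (-\eps_n, \eps_n)$ is a standard extremal-length computation, but obtaining the explicit $\log(1/\alpha_n)$ defect with an additive error that is linear in $\|f_n - R_{\alpha_n}\|$ requires careful Koebe-type distortion bounds on $f_n$ near the crescent boundary, together with control on the almost-invariant curves $\gamma_n$ used to cut out $C_n$. Once this inequality is established, the summation reproducing $\Phi(\alpha)$ is routine, and the remainder of the proof is a matter of assembling the limit.
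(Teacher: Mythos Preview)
The paper does not supply its own proof of this theorem: it is quoted as a result of Yoccoz \cite{Yoccoz2002}. The only argument the paper offers is the Remark immediately following Theorem~\ref{th-Yoc2}, which explains how the uniformity of $\kappa$ over $\{\alpha:\Phi(\alpha)<C\}$ follows from the quantitative version. Namely, one performs a \emph{finite} number of renormalizations---the number depending only on $\eps$ and $C$---until the renormalized map lives on a strip whose width exceeds $\tfrac{1}{2\pi}\Phi(\alpha)+C_0$ by a large factor, and then applies Theorem~\ref{th-Yoc2} as a black box. So the paper's route is a short reduction, not a direct construction of $h$.

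Your proposal instead outlines Yoccoz's original argument in full: build the entire renormalization tower, control the strip widths via the recursion $\eps_{n+1}\gtrsim \eps_n/\alpha_n-\log(1/\alpha_n)$, telescope to recover $\Phi(\alpha)$, and assemble $h$ as a limit of the uniformizing charts. This is essentially the proof of Theorem~\ref{th-Yoc2} itself, so you are reproving the underlying result rather than deducing Theorem~\ref{th-Yoc1} from it. The outline is correct in spirit, but two points deserve care. First, your recursion is missing the factor $\tfrac{1}{2\pi}$ in front of the logarithm; with your normalization the telescoped arithmetic sum is $2\pi\Phi(\alpha)$, not $\Phi(\alpha)$. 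Second, the sentence ``by contraction of the renormalization operator near a pure rotation, the errors $\delta_k$ \dots\ decay geometrically'' is circular as written: in Yoccoz's scheme that contraction is not an external input but a \emph{consequence} of the growing strip width (a Schwarz-type bound on a near-identity map of a tall cylinder), so the induction must establish the width lower bound and the smallness of $\|f_n-R_{\alpha_n}\|$ simultaneously. Once that coupling is made explicit, the rest of your plan goes through.
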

  In fact, Yoccoz also proved a quantitative version of this theorem, which shows more clearly the role of the Yoccoz-Brjuno function:
  \begin{theorem}[{\bf Yoccoz}]
    \label{th-Yoc2}
    There exists a universal constant $C_0>0$ such that the following holds.
   Let $f$ be an analytic diffeomorphism of the circle, with rotation number $\alpha=\rho(f)\in\cB$. Suppose that $f$ is an analytic and univalent function in $\Pi_\eps$ with
   $$\eps>\frac{1}{2\pi}\Phi(\alpha)+C_0.$$
   Then, there exists a conformal real-symmetric change of coordinates
   $$h:\Pi_{\eps'}\to\CC/\ZZ\text{ where }\eps'=\eps-\frac{1}{2\pi}\Phi(\alpha)-C_0$$ such that
   $$h\circ f\circ h^{-1}=R_\alpha.$$
   \end{theorem}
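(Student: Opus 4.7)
I would follow Yoccoz's renormalization scheme and inductively construct real-symmetric univalent renormalizations $f_n:\Pi_{\eps_n}\to\CC/\ZZ$ with $\rho(f_n)=\alpha_n$ (the $n$-th Gauss iterate), keeping careful track of the widths $\eps_n$.

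\textbf{Single renormalization step.} Given a univalent $g:\Pi_\delta\to\CC/\ZZ$ with rotation number $\beta\in(0,1)$, I pick a real basepoint $x_0$ and a nearly-vertical arc $\gamma\subset\Pi_\delta$ through $x_0$, so that the region $D_g\subset\Pi_\delta$ bounded between $\gamma$ and $g(\gamma)$ is a fundamental domain for $g$. Gluing $\gamma$ to $g(\gamma)$ via $g$ produces a Riemann surface biholomorphic to a cylinder; a real-symmetric uniformization $\psi_g:D_g\to\CC/\ZZ$ then defines the renormalization $\cR g:=\psi_g\circ F\circ\psi_g^{-1}$, where $F$ is the first return map of $g$ to $D_g$. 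The map $\cR g$ is univalent on a strip $\Pi_{\delta'}$ with $\rho(\cR g)=G(\beta)$.

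\textbf{Width estimate and telescoping.} The key quantitative input is an extremal-length estimate
\begin{equation*}
\delta'\;\geq\;\frac{1}{\beta}\left(\delta-\frac{1}{2\pi}\log\frac{1}{\beta}-C'\right)
\end{equation*}
for some universal $C'>0$: the factor $1/\beta$ absorbs the horizontal rescaling that normalizes the new cylinder to $\CC/\ZZ$, and the subtracted term bounds the modulus lost to the conformal surgery along $\gamma\sim g(\gamma)$. Iterating with $(g,\beta,\delta)=(f_n,\alpha_n,\eps_n)$, multiplying the $n$-th inequality by $\alpha_{-1}\alpha_0\cdots\alpha_{n-1}$ and telescoping gives
\begin{equation*}
\alpha_{-1}\alpha_0\cdots\alpha_{n-1}\,\eps_n\;\geq\;\eps-\frac{1}{2\pi}\sum_{k=0}^{n-1}\alpha_{-1}\alpha_0\cdots\alpha_{k-1}\log\frac{1}{\alpha_k}\;-\;C'\sum_{k=0}^{n-1}\alpha_{-1}\alpha_0\cdots\alpha_{k-1}.
\end{equation*}
The first sum converges to $\Phi(\alpha)$; the second is bounded by a universal $K$, since $\alpha_{-1}\alpha_0\cdots\alpha_{k-1}\leq 1/q_k$ and the denominators $q_k$ grow geometrically. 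Setting $C_0:=C'K$, the hypothesis $\eps>\tfrac{1}{2\pi}\Phi(\alpha)+C_0$ keeps the left-hand side bounded away from zero, so each $f_n$ is defined on a strip whose height tends to infinity.

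\textbf{Assembling the linearization and main obstacle.} Since $\eps_n\to\infty$, the renormalizations $f_n$ converge to rigid rotations on progressively taller strips, which means the next renormalization chart $\psi_{f_n}$ tends to the trivial embedding of a fundamental domain into $\CC/\ZZ$. The composition $\Psi_n:=\psi_{f_0}\circ\psi_{f_1}\circ\cdots\circ\psi_{f_{n-1}}$, extended equivariantly by iterates of $f$, produces an analytic chart of $\Pi_\eps$ into $\CC/\ZZ$ conjugating $f$ to $R_\alpha$ on a sub-strip of width converging to $\eps-\tfrac{1}{2\pi}\Phi(\alpha)-C_0$; a standard normal-family argument supplies the limit $h:\Pi_{\eps'}\to\CC/\ZZ$. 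The central technical difficulty is the single-step width estimate: a sharp extremal-length argument is required to show that the modulus lost by the conformal surgery is at most $\tfrac{1}{2\pi}\log(1/\beta)+C'$, which is the conceptual origin of the Yoccoz-Brjuno weights $\alpha_{-1}\alpha_0\cdots\alpha_{k-1}\log(1/\alpha_k)$ in $\Phi(\alpha)$.
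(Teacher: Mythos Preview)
The paper does not prove this theorem. Theorem~\ref{th-Yoc2} is stated as a result of Yoccoz \cite{Yoccoz2002} and is used as a black box (notably inside the proof of Theorem~\ref{th:ourconj} and Lemma~\ref{lem-charts}); the only discussion the paper offers is the Remark following the statement, which explains how Theorem~\ref{th-Yoc1} can be deduced from Theorem~\ref{th-Yoc2} by renormalizing a bounded number of times. So there is no ``paper's own proof'' to compare your proposal against.

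That said, your sketch is a faithful outline of Yoccoz's original argument: the single-step modulus estimate, the telescoping that produces the Yoccoz--Brjuno sum, and the passage to a limiting chart are exactly the ingredients in \cite{Yoccoz2002}. One small point to tighten: the composition you write as $\Psi_n=\psi_{f_0}\circ\psi_{f_1}\circ\cdots\circ\psi_{f_{n-1}}$ is not literally well-typed, since each $\psi_{f_k}$ maps a fundamental domain of $f_k$ \emph{into} the cylinder where $f_{k+1}$ lives; the linearizing chart $h$ is obtained by composing the \emph{inverses} $\psi_{f_{n-1}}^{-1}\circ\cdots\circ\psi_{f_0}^{-1}$ (suitably extended by the dynamics) and then inverting, or equivalently by pulling back a linearization of a deep renormalization through the tower of charts. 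This is only a notational slip and does not affect the substance of the outline.
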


   \begin{remark}
     In \cite{Yoccoz2002}, Theorem \ref{th-Yoc1} is formulated in a different way. Namely, Yoccoz states this theorem for an arbitrary Brjuno number $\alpha$, with the number $\kappa$ depending on $\alpha$. His proof, however, shows that we may choose  $\kappa$ uniformly for all $\alpha\in\cB_{\{s_k\}}$, see Proposition \ref{prop-Yoc} below and the discussion after it. This is proved implicitly in \cite[Sec.4.3]{Yoccoz2002}.
    \end{remark}

%      Indeed, Yoccoz defines annuli maps that are renormalizations of $f$, and their domains are annuli with increasing moduli. To derive Theorem \ref{th-Yoc1} from Theorem \ref{th-Yoc2}, it suffices to renormalize $f$ several times so that the renormalized map is defined in the strip of width $\eps$ that is greater than $\frac{1}{2\pi}\Phi(\alpha)+C_0$ multiplied by some large constant,
     %\fixme{rev X rem 3},
%      and to apply Theorem \ref{th-Yoc2}. The number of renormalizations which is required for this only depends on $\eps$ and $\Phi(\alpha)$.

%     Moreover, similar arguments prove that for fixed $\eps, C$ and $\alpha \in \mathcal B_C$, for each $a$, there exists $b$ such that $\|f-id\|_{C(\Pi_\eps)}<b$ implies $\|h'-1\|_{C(\Pi_{\eps/2})}<a$. Indeed,  if $\eps$ is large enough, $\eps>\eps_0$, then  the fact that the conjugacy $h$ in Theorem \ref{th-Yoc2} is defined in a thick strip $\Pi_{\eps'}$ and $h(\Pi_{\eps'}) \subset \Pi_{\eps}$ implies $|h'-1|<a$ in $\Pi_{0.6\eps'}$. Since for $\Psi(\alpha)<C$ and $\eps$ fixed, we need a fixed amount of renormalizations to expand the domain of $f$ to $\Pi_{\eps_0}$, and the corresponding rescalings are close to linear when $f$ is close to rotation, we conclude that there exists $b$ such that $\dist (f, R_\alpha)<b$ implies  $|h'-1|<a$ in $\Pi_{0.5\eps}$.
%    \end{remark}

 We note that, although the methods of the proof in \cite{Yoccoz2002} and \cite{Risler} are similar, it was by no means clear what the connection is between Yoccoz's theorems and
 the result of Risler.

\subsection{Main results}
Our principal motivation was to  re-prove Theorem \ref{th-Ris}  using a different approach. In fact, we prove the following, stronger statement.
Let  $\mathcal D_{\eps}$ be the affine Banach space formed by bounded analytic maps $$f\colon \Pi_\eps\mapsto\bbC/\bbZ$$ that are defined in $\Pi_\eps$, and extend continuously to the boundary,  with the $\sup$-norm in this annulus.

  \begin{theorem}
    \label{th-submanif}
    For every $\{s_k\}\to 0$ and $\eps>0$ there exists $\kappa>0$ such that the following holds.
  Let $\alpha\in\cB_{\{s_k\}}$.% be a Brjuno number with $\Phi_k(\alpha)<s_k$.
  Then the set of maps $f\in \mathcal D_{\eps}$ such that $f$ is analytically conjugate to $R_\alpha$ in  $\Pi_{\eps/2}$ and whose distance  to $R_\alpha$ is bounded by $\kappa$ forms an embedded analytic submanifold of $\mathcal D_\eps$ at $R_\alpha$ of codimension 1.
  %\fixme{I rewrote this. Should discuss.}
\end{theorem}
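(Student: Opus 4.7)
The plan is to prove Theorem~\ref{th-submanif} by constructing a Yoccoz-type renormalization operator $\mathcal R$ on a neighborhood of $R_\alpha$ in $\mathcal D_\eps$, showing that it is analytically hyperbolic at rotations with a one-dimensional unstable direction along translations, and identifying its local stable manifold at $R_\alpha$ with the set of maps analytically conjugate to $R_\alpha$.

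First I would define $\mathcal R$ as follows. For a univalent $f\in\mathcal D_\eps$ close to $R_\alpha$, take the region between $\bbR/\bbZ$ and $f(\bbR/\bbZ)$, glue its two boundary arcs via $f$, and conformally uniformize the resulting abstract annulus to a round strip. The first-return dynamics descends to a univalent map $\mathcal R(f)$ defined in a strip of width $2\eps_1>2\eps$, with rotation number $G(\rho(f))$; in particular $\mathcal R(R_\alpha)=R_{G(\alpha)}$. I would verify that $\mathcal R$ is complex-analytic between appropriate Banach manifolds, and that its derivative at $R_\alpha$ splits into a $1$-dimensional unstable subspace tangent to the curve $t\mapsto R_{\alpha+t}$ and a codimension-$1$ strongly contracting stable subspace. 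The contraction is uniform over $\alpha\in\cB$ with $\Phi(\alpha)<C$ because $\mathcal R$ effectively compresses the first-return times of $f$, whose growth is controlled by $\Phi$; the translation direction is expanded because the derivative of the Gauss map is large.

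Since $R_\alpha$ is not a fixed point---its orbit runs $R_\alpha\to R_{G(\alpha)}\to R_{G^2(\alpha)}\to\cdots$---I would invoke a non-autonomous, Banach-analytic stable-manifold theorem along this orbit. Thanks to the Yoccoz--Brjuno bound $\Phi(\alpha)<C$, one obtains an analytic codimension-$1$ submanifold $W^s\subset\mathcal D_\eps$ through $R_\alpha$ consisting of those $f$ for which $\mathcal R^n(f)$ remains close to $R_{G^n(\alpha)}$ for all $n\geq 0$. I would then identify $W^s$ with the set of maps analytically conjugate to $R_\alpha$ in $\Pi_{\eps/2}$. The inclusion ``conjugate $\Rightarrow$ stable'' is direct: any such conjugacy descends through one step of renormalization, keeping orbits in the stable set. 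For the converse, given $f\in W^s$, after $n$ renormalizations $\mathcal R^n(f)$ lies close enough to $R_{G^n(\alpha)}$ in a tall strip that Theorem~\ref{th-Yoc1} (whose constant $\kappa$ depends only on $C$ and $\eps$) supplies an analytic conjugacy; pulling it back through the renormalization tower yields a conjugacy between $f$ and $R_\alpha$ on $\Pi_{\eps/2}$. A holomorphic motions argument, applied to the family of these conjugacies over $f\in W^s$, guarantees analyticity in $f$ and furnishes the claimed embedded analytic submanifold structure.

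The main obstacle will be carrying out the renormalization construction with uniform analytic and hyperbolic estimates independent of the particular Brjuno $\alpha$ satisfying $\Phi(\alpha)<C$. The operator must be well-defined and contracting even when a continued-fraction partial quotient $a_n$ is very large (so $\alpha_n$ is tiny), and the spectral gap at every rotation along the orbit must be bounded below uniformly. The Yoccoz--Brjuno condition precisely controls the cumulative loss of strip width under renormalization, but converting this into a clean Banach-analytic hyperbolic picture, and then bridging the resulting stable manifold to the conjugacy class via holomorphic motions and Theorem~\ref{th-Yoc1}, is the heart of the argument.
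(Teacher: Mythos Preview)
Your overall architecture---build a renormalization operator, prove hyperbolicity with one unstable direction along translations, and identify the stable manifold with the conjugacy class via Yoccoz---is exactly the paper's strategy. But there are two genuine departures and one real gap.

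\textbf{Fundamental domain.} The region ``between $\bbR/\bbZ$ and $f(\bbR/\bbZ)$'' is not a fundamental domain for $f$ (and is empty when $f$ is real-symmetric). The paper, like Yoccoz, uses a \emph{vertical} fundamental domain: a nearly vertical segment $I\ni 0$ together with its image $f^n(I)$, where $n=q_m$ is chosen so that $\{q_m\alpha\}<0.01$. This choice makes the rescaling factor at least $10^4$, giving a uniform expansion bound on the translation direction and guaranteeing that the renormalized strip is genuinely taller.

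\textbf{Stable manifold construction.} You invoke a non-autonomous stable-manifold theorem along the orbit $R_\alpha, R_{G(\alpha)},\ldots$. The paper avoids this: it approximates $\alpha$ by numbers $\alpha_n$ with periodic continued fraction (and $\Phi(\alpha_n)<2\Phi(\alpha)$), so that each $R_{\alpha_n}$ is a hyperbolic \emph{periodic} point of $\cR$; applies the standard Hadamard--Perron theorem there; proves the resulting local stable manifolds are uniform graphs over a fixed ball in the zero-average hyperplane $V_0$; and passes to a limit as $\alpha_n\to\alpha$.

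\textbf{The gap: stable $\Rightarrow$ conjugate.} This step, as you wrote it, does not go through. Theorem~\ref{th-Yoc1} applies only to analytic \emph{circle diffeomorphisms}, i.e.\ real-symmetric maps preserving $\bbR/\bbZ$. For a generic complex $f\in W^s(R_\alpha)$, neither $f$ nor any iterate $\cR^n(f)$ preserves a real circle, so there is nothing to feed into Yoccoz's theorem and then pull back. The paper fixes this by reversing the order of your last two ingredients. It \emph{first} uses a holomorphic-motions argument---not on conjugacies, but on the orbit $\{f^k(0):k\in\bbZ\}$ viewed as a function of $f$ over the analytic submanifold $\cV\subset W^s$---to conclude that the orbit closure $\gamma_f$ is an $f$-invariant quasicircle homotopic to $\bbR/\bbZ$, on which $f$ has rotation number $\alpha$. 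It then uniformizes the two complementary cylinders of $\gamma_f$ in $\Pi_\eps$, obtaining via Schwarz reflection two genuine analytic circle diffeomorphisms $g_\pm$ with rotation number $\alpha$; applies Theorem~\ref{th-Yoc1} to each of those; and finally glues the two conjugacies along $\gamma_f$ using conformal removability of quasicircles. Your proposed use of holomorphic motions (analytic dependence of the conjugacy on $f$) is not what is needed and does not close this gap.
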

  Theorem~\ref{th-submanif} clearly implies Risler's Theorem \ref{th-Ris}.
    %\fixme{rev Y comm 2... or mention that the ``size'' of this submanifold is univformly estimated for all families?}

This result is a corollary of the properties of a {\it renormalization operator} which we construct in this paper. Before describing them,
let us give a few useful definitions.
Suppose, $\mathbf B$ is a complex Banach space whose elements are functions of a complex variable. Following the
notation of \cite{Ya3}, let us say that
the {\it real slice} of $\mathbf B$ is the real Banach space $\mathbf B^\RR$ consisting of the real-symmetric elements of $\mathbf B$.
If $\mathbf X$ is a Banach manifold modelled on $\mathbf B$ with the atlas $\{\Psi_\gamma\}$
we shall say that $\mathbf X$ is {\it real-symmetric} if $\Psi_{\gamma_1}\circ\Psi_{\gamma_2}^{-1}(U)\subset \mathbf B^\RR$ for any pair of indices $\gamma_1$, $\gamma_2$ and any open set $U\subset\mathbf B^\RR$, for which the above composition is defined. The {\it real slice of $\mathbf X$} is then defined as the real
Banach manifold $\mathbf X^\RR= \cup_\gamma\Psi_\gamma^{-1}(\mathbf B^\RR) \subset \mathbf X$
with an atlas $\{\Psi_\gamma\}$.
An operator $A$ defined on a {subset} $Y\subset\mathbf X$ is {\it real-symmetric} if $A(Y\cap\mathbf X^\RR)\subset \mathbf X^\RR$.

% Let $\mathcal B_C = \{\alpha\in \mathcal B \mid \Phi(\alpha)<C\}$. We remark that this set is not invariant under the Gauss map $G$.

\begin{theorem}[{\bf Main Theorem}]
\label{th-main}
For sufficiently large $\eps$, there exists a renormalization operator $\mathcal R\colon \mathcal D_{\eps}\to \mathcal D_{\eps}$ with the following properties.
\begin{enumerate}
\item $\mathcal R$ is defined in a neighborhood of the set $$\mathcal T=\{R_\alpha \mid \alpha\in (\bbR/\bbZ)\setminus K\}$$ in $\mathcal D_{\eps}$ where $K\subset\bbQ/\bbZ$ is a countable set that accumulates only at rational numbers $p/q$ with $q\le 100$;
 \item $\mathcal R$ is a real-symmetric complex-analytic operator with compact differential at each $R_\alpha\in \mathcal T$;
 \item  \label{it-hyp} For each $C$, for  $\eps>c_1 C + c_2$ where $c_1, c_2$ are universal constants, the operator $\mathcal R$ is hyperbolic  with 1-dimensional unstable direction on the set
 $$
 \{R_\alpha \mid \alpha\in \mathcal B_{C}\}.
 $$
 At each point of this set, the following properties hold:
 \begin{itemize}
  \item  $\mathcal R$ has an unstable manifold with uniform expansion;
 \item   There exists a local codimension 1 analytic submanifold $\mathcal V_\alpha$ such that powers of $\mathcal R$ contract on it, with a uniform rate of contraction;

 \item $\mathcal V_\alpha$ only contains   diffeomorphisms $f$ that are analytically conjugate to $R_\alpha$:  $f = \xi R_\alpha \xi^{-1}$, $\xi(0)=0$, where $\xi$ is defined in $\Pi_{0.4\eps}$. Moreover, if $f$ is analytically conjugate to $R_\alpha$ in a substrip of $\Pi_{\eps/3}$ and sufficiently (depending on $\alpha$) close to $R_\alpha$, then  $f \in \cV_\alpha$.

 \end{itemize}

%  coincides with the set of diffeomorphisms $f$ that are analytically conjugate to $R_\alpha$ in some open strip in $\Pi_\eps$ that contains $0$. This strip always contains the strip $\Pi_{\eps/2}$.
\end{enumerate}

\end{theorem}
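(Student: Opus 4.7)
The strategy is to build $\mathcal R$ as a Yoccoz-style cylinder renormalization, to read off its differential at each rotation $R_\alpha$ by a direct calculation, and then to identify the stable leaf $\mathcal V_\alpha$ with the set of conjugates of $R_\alpha$ by combining Hadamard--Perron in Banach spaces with Yoccoz's Theorem~\ref{th-Yoc2}.

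\textbf{Construction.} For $f \in \mathcal D_\eps$ sufficiently close to $R_\alpha$ with $\alpha \notin K$, I would choose a nearly-vertical analytic arc $\gamma\subset\Pi_\eps$ through $0$ so that the strip $S_f$ bounded by $\gamma$ and $f(\gamma)$ is a topological fundamental domain for $\langle f\rangle$ near $\RR/\ZZ$. Identifying $\gamma\sim f(\gamma)$ via $f$ yields a Riemann surface $\Sigma_f$ of cylinder type, and a real-symmetric uniformization $\Sigma_f\simeq \CC/\ZZ$, normalized so that the seam becomes vertical and $0\mapsto 0$, pushes the first-return dynamics to an analytic map $\mathcal R(f)\in\mathcal D_{\eps'}$. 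The exceptional set $K$ records the small-denominator rationals together with the pre-images under the induced Gauss-type map where this picture degenerates. All three ingredients (selection of $\gamma$, gluing equation, uniformization) depend analytically on $f$ and are real-symmetric, giving (1)--(2); a modulus comparison shows that, for $\eps$ large, $\eps' > \eps+\delta$ with $\delta>0$ uniform, so $\mathcal R$ factors through $\mathcal D_{\eps+\delta}$ and $D\mathcal R$ is compact by Cauchy estimates.

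\textbf{Hyperbolicity.} The renormalization acts on rotations as $\mathcal R(R_\alpha) = R_{\beta(\alpha)}$ where $\beta$ is conjugate to the Gauss map $G$. I would compute $D\mathcal R(R_\alpha)$ by perturbing $f=R_\alpha+tv$ and tracking $v$. The constant perturbation $v\equiv c$ (tangent to $\{R_a\}$) is mapped to a constant of size $|G'(\alpha)|\cdot|c|=|c|/\alpha^2$, producing expansion along the translation direction. On the complementary subspace of mean-zero vector fields, the contribution to $\mathcal R(f)-R_{\beta(\alpha)}$ is controlled, after gluing and uniformization, by a uniform factor $C\cdot e^{-2\pi\delta}$, because nonconstant Fourier modes $e^{2\pi inz}$ decay exponentially when transported from $\Pi_{\eps+\delta}$ back to $\Pi_\eps$. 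The Brjuno bound $\Phi(\alpha)<C$ governs the accumulation of geometric loss along the Gauss orbit via the partial sums appearing in \eqref{BY-function}, making both estimates uniform over $\alpha\in\mathcal B_C$; the codimension-$1$ real-analytic stable leaves $\mathcal V_\alpha$ then follow from the Hadamard--Perron theorem in the real-symmetric Banach setting with compact linearization.

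\textbf{Identifying $\mathcal V_\alpha$ and main obstacle.} If $f\in \mathcal V_\alpha$, then $\mathcal R^n(f)\to R_{\alpha_n}$ exponentially along the Gauss orbit, and composing the successive uniformization charts telescopes into an analytic conjugacy $\xi$ between $f$ and $R_\alpha$; the required domain $\Pi_{0.4\eps}$ is recovered from Theorem~\ref{th-Yoc2}, which quantifies the per-step loss of modulus by a term summing to at most $\frac{1}{2\pi}\Phi(\alpha)+C_0$. Conversely, a map analytically conjugate to $R_\alpha$ in a substrip of $\Pi_{\eps/3}$ produces a bounded $\mathcal R$-orbit, and uniform hyperbolicity at $R_\alpha$ forces any such point into the local stable leaf. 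The main technical difficulty is precisely the uniform hyperbolicity on $\mathcal B_C$: the expansion on translations must remain bounded below and the contraction on mean-zero directions bounded above, with rates independent of $\alpha$, even as $\alpha$ approaches $K$ or as its continued-fraction coefficients grow. The Brjuno hypothesis is the quantitative input that makes these estimates uniform; every other ingredient (analyticity, compact differential, Hadamard--Perron, and the synthesis of $\xi$) is standard once the hyperbolic estimate is secured.
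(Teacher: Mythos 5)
Your proposal reproduces the general architecture (cylinder renormalization, hyperbolicity at rotations, Hadamard--Perron, Yoccoz as the linearization input), but two of its load-bearing steps have genuine gaps. First, the contraction estimate on the mean-zero subspace is asserted by an incorrect mechanism: the differential of cylinder renormalization at $R_\alpha$ is not ``transport the perturbation to a wider strip $\Pi_{\eps+\delta}$ and restrict back,'' so a uniform factor $Ce^{-2\pi\delta}$ from Fourier decay does not follow. Written in linearizing coordinates the differential is a composition $\mathcal L_\beta\,\mathcal Q\,\mathcal M_\alpha$, where $\mathcal M_\alpha$ solves the cohomological equation $h(z+\alpha)-h(z)=v(z)$; its norm involves the small divisors $e^{2\pi i k\alpha}-1$, which is exactly where the arithmetic hypothesis enters (Lemma~\ref{lem-tangent} requires $\log q_{k+1}/q_k<0.1\pi\eps$), and the resulting bound carries a constant depending on $\alpha$ (Theorem~\ref{th:infcont}); the actual contraction comes from the rescaling acting on the nonlinearity norm $\sup|v''|$, not from strip-width bookkeeping. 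Moreover, with your single-step (Gauss-map) renormalization the unstable multiplier at $R_\alpha$ is only $1/\alpha^2$, which is not uniformly bounded away from $1$ (take $a_0=1$, $\alpha$ near $1$), so uniform hyperbolicity on $\mathcal B_C$ fails as stated. The paper avoids both problems by renormalizing the return map of $f^{n(\alpha)}$, $n(\alpha)=q_m$ chosen so that $l=\{n\alpha\}<0.01$: then the unstable multiplier is $l^{-2}\ge 10^4$ (Lemma~\ref{lem-expand}) and the rescaling by $1/l$ gives a definite per-step factor $0.1$ on $V_0$, with the chart $\Psi$ controlled quantitatively via a Beltrami-equation construction (Technical Lemma~\ref{lem-Psi-estim}).

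Second, the identification of the stable leaf with the conjugacy class is not proved by ``telescoping the uniformization charts.'' A point $f$ of $W^s(R_\alpha)$ is a complex annulus map with no invariant circle given a priori, so Theorem~\ref{th-Yoc2} does not apply to it directly, and making the composition of charts converge on a definite substrip is precisely the delicate domain-control argument of Risler that the paper is structured to avoid. The paper instead uses a holomorphic-motion argument over the stable manifold: $(f,k\alpha)\mapsto f^k(0)$ is a holomorphic motion, the $\lambda$-lemma produces an invariant quasicircle $\gamma_f$ on which $f$ has rotation number $\alpha$, one uniformizes the two complementary annuli, applies Yoccoz's Theorem~\ref{th-Yoc1} to the two induced analytic circle maps, and glues the two linearizations using conformal removability of quasicircles (Theorem~\ref{th:ourconj}); the definite thickness $\Pi_{0.4\eps}$ then requires the separate estimate on $\dist(\gamma_f,\RR/\ZZ)$ of Lemma~\ref{lem-curves} and Lemma~\ref{lem-charts}. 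Finally, the converse inclusion (conjugacy in a thin substrip of $\Pi_{\eps/3}$ forces $f\in\cV_\alpha$) does not follow from ``bounded $\mathcal R$-orbit plus hyperbolicity'': you have no a priori control that the renormalization orbit stays in the domain of $\mathcal R$, and the closeness threshold in the statement must not depend on the substrip. The paper proves it by exploiting that $\cV_\alpha$ is a graph over $V_0$ and analyzing the one-parameter family $f+t$ in the linearizing chart of a nearby leaf point (Lemma~\ref{lem-conj}).
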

 Theorem \ref{th-submanif} is a direct corollary of the above theorem for sufficiently large values of $\eps$. For small $\eps$, one needs to renormalize several times to increase the domain where the maps are defined, and then refer to Theorem \ref{th-main}. See \S~\ref{sec-conclusion} for more details.

The construction of the renormalization operators below is inspired by the cylinder renormalization transformation defined in \cite{Ya3}, and its adaptation in \cite{GorYa}. The
difference with Yoccoz's renormalization approach \cite{Yoccoz2002} is that we employ renormalization not only as a geometric tool, but as a dynamical system acting on a functional space. This allows us to study the dynamics of this operator, which is hyperbolic, with a one-dimensional unstable direction. This ensures the existence of the stable foliation of unit codimension from the general theory of hyperbolic dynamics.

The proof of hyperbolicity of the renormalization operator is surprisingly straightforward.
However, the existence of the conjugacy to $R_\alpha$ for $f$ in the stable leaf of $R_\alpha$ is, of course, not an automatic consequence of the general theory. It is here that we use Yoccoz's Theorem~\ref{th-Yoc1} -- and thus establish a direct connection between Yoccoz's and Risler's results.

Since the operator $\cR$ is real-symmetric, the same properties hold for its restriction to the real slice $\cD_\eps^\RR$, so, in particular:
\begin{corollary}
\label{cor-submanif1}
  Let $\alpha$ be a Brjuno number.
  For any positive $\eps$, the set of circle homeomorphisms $f\in \mathcal D^\RR_{\eps}$ such that $\rho(f)=\alpha$
  forms a local analytic submanifold
  of $\mathcal D^\RR_\eps$ at $R_\alpha$ of codimension 1.
\end{corollary}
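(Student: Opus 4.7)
The plan is to deduce the corollary by restricting Theorem~\ref{th-main} to the real slice and invoking Yoccoz's Theorem~\ref{th-Yoc1} to identify the stable leaf of $R_\alpha$ with the level set $\{\rho=\alpha\}$. Fix $\alpha\in\cB$, set $C:=\Phi(\alpha)+1$, and begin with the case when $\eps$ is large enough for Theorem~\ref{th-main} to apply to $\alpha\in\cB_C$. Let $\cV_\alpha\subset\cD_\eps$ be the local stable leaf through $R_\alpha$, a codimension-$1$ complex-analytic submanifold whose points are analytically conjugate to $R_\alpha$. Because $\cR$ is real-symmetric and $R_\alpha\in\cD_\eps^\RR$, the leaf $\cV_\alpha$ is itself real-symmetric, and hence $\cV_\alpha\cap\cD_\eps^\RR$ is a real-analytic codimension-$1$ submanifold of $\cD_\eps^\RR$ at $R_\alpha$.

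The core step is to match, in some sufficiently small neighborhood $U$ of $R_\alpha$ in $\cD_\eps^\RR$, the two sets $\cV_\alpha\cap U$ and $\{f\in U\colon \rho(f)=\alpha\}$. For the forward direction, every $f\in\cV_\alpha\cap U$ is analytically conjugate to $R_\alpha$ by Theorem~\ref{th-main}(3), and (shrinking $U$ so that $f'$ stays close to $1$ on $\bbR/\bbZ$) $f$ is a real-analytic circle diffeomorphism whose rotation number is therefore $\alpha$. For the converse, given a real-symmetric $f\in U$ with $\rho(f)=\alpha$, I apply Theorem~\ref{th-Yoc1} with the chosen constant $C$ (shrinking $U$ once more if needed) to produce a real-symmetric analytic conjugacy $h\circ f\circ h^{-1}=R_\alpha$ defined on $\Pi_{\eps/2}$. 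In particular this conjugacy is defined on a substrip of $\Pi_{\eps/3}$, and the last assertion of Theorem~\ref{th-main}(3) then places $f$ in $\cV_\alpha$.

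For arbitrary $\eps>0$, I would first push $f$ into a strip of sufficient width by a finite number $N=N(\eps,C)$ of iterations of the renormalization operator, following the scheme outlined just after Theorem~\ref{th-main}. The previous step, applied at $R_{\alpha^{(N)}}$ in $\cD_{\eps'}^\RR$ with $\eps'$ large enough, yields a codimension-$1$ real-analytic stable leaf there; its preimage under $\cR^N$ is again a codimension-$1$ real-analytic submanifold of $\cD_\eps^\RR$ at $R_\alpha$, because $\cR$ is a real-symmetric complex-analytic operator whose differential at each $R_\alpha$ is a compact perturbation that is hyperbolic on the model space, with a one-dimensional unstable direction (translations) transverse to $\cV_\alpha$. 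I expect the real obstacle to lie not in this formal deduction but in the ``moreover'' clause of Theorem~\ref{th-main}(3) itself: it is precisely this clause, combined with Yoccoz's Theorem~\ref{th-Yoc1}, that closes the loop between the dynamical stable leaf of $\cR$ and the level set of the rotation number, and establishing it in the paper should require a careful geometric comparison between the linearizing chart produced by Yoccoz and the holonomy of the stable foliation.
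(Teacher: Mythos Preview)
Your argument is correct, and the forward inclusion $\cV_\alpha^\RR\subset\{\rho=\alpha\}$ is handled exactly as in the paper. The converse, however, is obtained differently. You invoke Yoccoz's Theorem~\ref{th-Yoc1} to linearize any real-symmetric $f$ with $\rho(f)=\alpha$, and then feed the resulting conjugacy into the ``moreover'' clause of Theorem~\ref{th-main}(3) to conclude $f\in\cV_\alpha$. The paper bypasses this and argues more directly: the level set $\{\rho=\alpha\}$ is automatically a (continuous) graph over the hyperplane $V_0$, because the rotation number is strictly monotone along the translation direction $f\mapsto f+a$; since $\cV_\alpha^\RR$ is also a graph over $V_0$ (Theorem~\ref{th:leafs}) and is contained in $\{\rho=\alpha\}$, the two germs must coincide. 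Your route is perfectly valid but re-enters Yoccoz's theorem and the ``moreover'' clause (whose proof, Lemma~\ref{lem-conj}, is itself a graph argument of the same flavor); the paper's route extracts that graph idea once and applies it directly, avoiding the detour through linearization. Your handling of small~$\eps$ by pulling back under finitely many renormalizations is the same scheme the paper uses in Section~\ref{sec-conclusion}.
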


\noindent
We can apply our results to specific families of maps which are transverse to the stable foliation of $\cR$.

%Moreover, armed with the quantitave version of Yoccoz's Theorem, we can derive global versions of the above statement for maps which are defined in a large enough annulus.
For instance, consider the Arnold family
$$F_{\mu,a}(z)=z+\mu-\frac{a}{2\pi}\sin(2\pi z)\mod \bbZ,\;\mu\in[0,1),\; a\in[0,1).$$
    Recall that the Arnold's tongue of rotation number $\alpha$ is defined as
    $$\bbA_\alpha=\{(\mu,a)\;|\;\rho(F_{\mu,a})=\alpha\}.$$
\begin{corollary}
\label{cor-tongue}
For each $\{s_k\}\to 0$ there exists $\delta_{\{s_k\}}>0$, such that the set
$\{\bbA_\alpha,\;\alpha\in\cB_{\{s_k\}},a\in[0,\delta_{\{s_k\}})\}$ is a foliation by real-analytic curves over $a\in[0,\delta_{\{s_k\}})$.
\end{corollary}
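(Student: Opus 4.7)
The plan is to apply Theorem~\ref{th-main} to the Arnold family $(\mu,a)\mapsto F_{\mu,a}$, viewed as a real-analytic two-parameter map into $\cD_\eps^\RR$ for $\eps$ chosen large enough that Theorem~\ref{th-main} applies. Each $F_{\mu,a}$ is entire and thus lies in $\cD_\eps$ for every $\eps>0$, and the slice $\{a=0\}$ sends $\mu\mapsto R_\mu$ into the set $\cT$ of translations.

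First I would verify that the family is transverse to every stable leaf $\cV_\alpha$, $\alpha\in\cB_C$, at $R_\alpha$, uniformly in $\alpha$. By item~(3) of Theorem~\ref{th-main} the unstable direction at $R_\alpha$ is spanned by the tangent $\partial_a R_a|_{a=0}\equiv 1$ to the translation curve, and $\cV_\alpha$ is a codimension-one complementary analytic submanifold. A direct computation gives $\partial_\mu F_{\mu,0}|_{\mu=\alpha}\equiv 1$, so the $\mu$-curve $\{a=0\}$ is transverse to each $\cV_\alpha$ at $R_\alpha$; the uniform hyperbolicity of $\cR$ on $\{R_\alpha : \alpha\in\cB_C\}$ provides uniform lower bounds on the transversality angle and on the size of the local stable manifolds.

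A quantitative implicit function theorem applied to the condition ``$F_{\mu,a}\in\cV_\alpha$'' then yields a single $\delta_C>0$, independent of $\alpha\in\cB_C$, such that for each $\alpha\in\cB_C$ and each $a\in[0,\delta_C)$ there is a unique $\mu=\mu_\alpha(a)$, real-analytic in $a$, with $F_{\mu_\alpha(a),a}\in\cV_\alpha$. By the penultimate bullet of item~(3), $F_{\mu_\alpha(a),a}$ is analytically conjugate to $R_\alpha$ in $\Pi_{0.4\eps}$, so its rotation number equals $\alpha$, placing this graph inside $\bbA_\alpha$. Conversely, if $(\mu,a)\in\bbA_\alpha$ with $a\in[0,\delta_C)$ and $F_{\mu,a}$ is close to $R_\alpha$ in $\cD_\eps$, then $F_{\mu,a}$ is an analytic circle diffeomorphism with Brjuno rotation number $\alpha$; Theorem~\ref{th-Yoc1} (after iterating $\cR$ finitely many times to widen the strip, cf.\ Theorem~\ref{th-Yoc2}) furnishes an analytic conjugacy to $R_\alpha$ on a substrip of $\Pi_{\eps/3}$, and the ``moreover'' clause of item~(3) then forces $F_{\mu,a}\in\cV_\alpha$, hence $\mu=\mu_\alpha(a)$. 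Thus the graphs coincide with the tongues $\bbA_\alpha$ inside the slab $\{a\in[0,\delta_C)\}$.

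These graphs are pairwise disjoint (rotation number is a conjugacy invariant) and each is a real-analytic curve transverse to $\{a=\mathrm{const}\}$, so together they give the required foliation by real-analytic curves over $a\in[0,\delta_C)$. The main obstacle is securing a single $\delta_C$ that works uniformly for every $\alpha\in\cB_C$; this is exactly the payoff of the \emph{uniform} hyperbolicity asserted in item~(3) of Theorem~\ref{th-main}, since pointwise hyperbolicity at each $R_\alpha$ would give only a $\delta$ depending on $\alpha$, with no positive infimum a priori.
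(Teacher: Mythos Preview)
Your proposal is correct and follows essentially the same route as the paper: identify $\bbA_\alpha$ (for small $a$) with the intersection of the two-parameter family $\{F_{\mu,a}\}$ with the local stable leaf $\cV_\alpha$, check transversality in the $\mu$-direction, and use Yoccoz's linearization for the reverse inclusion. The paper is terser---it invokes Theorem~\ref{th-Yoc2} directly (since $F_{\mu,a}$ is entire, one may take $\eps$ large enough once and for all) to get the uniform $\delta_C$ and then appeals to Theorem~\ref{th:contraction} to conclude $\{F_{\mu,a}:(\mu,a)\in\bbA_\alpha\}=W^s(R_\alpha)\cap\{F_{\mu,a}\}$, leaving the transversality step implicit---whereas you spell out the implicit-function argument and attribute uniformity to the uniform hyperbolicity in item~\eqref{it-hyp} of Theorem~\ref{th-main}; but both routes give the same $\delta_C$ and the same proof structure.
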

Compare this with the discussion in the introduction of \cite{Risler}. A similar statement can be obtained for the complex Arnold family. See  \cite{FaGe} for a global result on the analytic parametrization of Arnold's tongues in terms of the moduli of Herman rings.

Another example is the family of Blaschke fractions
\begin{equation}
  \label{eq:blfr}
f_{\mu,a}(z)=\mu z^2\frac{az+1}{z+a},\;\mu\in \CC^*,a\in\CC.
  \end{equation}
This family has been studied in the literature as an example of Shishikura's quasiconformal surgery \cite{Shisurg} turning quadratic Siegel disks into Herman rings
(see  \cite{bfgh} and the discussion therein). In particular, an Arnold disk $\WW_\alpha\ni 0$ is defined in \cite{bfgh} as
the set of $(\mu,a)\in\CC^*\times \CC$ such that $f_{\mu,a}$ has a fixed Herman ring with rotation number $\alpha$ (we include the case $f_{\alpha,0}\equiv R_\alpha$).
Then we can derive:
\begin{corollary}
  \label{cor-disk}
  For each  $\{s_k\}\to 0$ there exists $r_{\{s_k\}}>0$ such that the set of Arnold disks $\{W_\alpha,\;\alpha\in\cB_{\{s_k\}}\}$ forms a foliation by complex analytic graphs over $|a|<r_{\{s_k\}}$.
\end{corollary}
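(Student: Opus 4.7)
My strategy is to obtain Corollary~\ref{cor-disk} as a direct consequence of Theorem~\ref{th-main} by lifting the Blaschke family to the cylinder. The plan is to transport $f_{\mu,a}$ through the covering $z=e^{2\pi iw}$ to a complex-analytic family $(\mu,a)\mapsto\tilde f_{\mu,a}\in\cD_\eps$ with $\tilde f_{e^{2\pi i\alpha},0}=R_\alpha$, show that the $\mu$-direction is transverse to the stable leaf $\cV_\alpha$, and finish by the holomorphic implicit function theorem. For $|a|$ small and $\mu$ near $e^{2\pi i\alpha}$, the pole $z=-a$ and the zero $z=-1/a$ of $f_{\mu,a}$ avoid the annulus $\{e^{-2\pi\eps}<|z|<e^{2\pi\eps}\}$ for any $\eps$ large enough for Theorem~\ref{th-main} to apply to $\cB_C$. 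On this annulus $f_{\mu,a}$ is univalent and close to $\mu z$, so it lifts to the desired $\tilde f_{\mu,a}\colon\Pi_\eps\to\CC/\ZZ$, and $(\mu,a)\mapsto\tilde f_{\mu,a}$ is complex-analytic into $\cD_\eps$. Since $f_{\mu,0}(z)=\mu z$ lifts to $w\mapsto w+\tfrac{1}{2\pi i}\log\mu$, one has $\tilde f_{e^{2\pi i\alpha},0}=R_\alpha$ and $\partial\tilde f/\partial\mu$ at this point is a nonzero constant function of $w$, hence lies along the unstable direction of $\cR$ at $R_\alpha$ identified in Theorem~\ref{th-main}(3).

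Next I would match the condition $(\mu,a)\in W_\alpha$ with $\tilde f_{\mu,a}\in\cV_\alpha$ in a sufficiently small neighborhood of $(e^{2\pi i\alpha},0)$. A Herman ring for $f_{\mu,a}$ of rotation number $\alpha$ lifts through the exponential to an analytic conjugacy between $\tilde f_{\mu,a}$ and $R_\alpha$ on a substrip of $\Pi_{\eps/3}$, provided the Herman ring is wide enough; this wideness follows from a continuity argument rooted in the fact that $f_{e^{2\pi i\alpha},0}(z)=e^{2\pi i\alpha}\cdot z$ linearizes on all of $\CC^*$. Theorem~\ref{th-main}(3) then places $\tilde f_{\mu,a}$ in $\cV_\alpha$. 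Conversely, a conjugacy $\tilde f_{\mu,a}=\xi R_\alpha\xi^{-1}$ in $\Pi_{0.4\eps}$ furnished by $\cV_\alpha$ projects to a Herman ring of $f_{\mu,a}$ of rotation number $\alpha$.

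Since $\cV_\alpha$ is a codimension-$1$ analytic submanifold and the $\mu$-partial of $\tilde f$ is transverse to it, the holomorphic implicit function theorem represents $W_\alpha$ locally as a graph $\mu=\mu_\alpha(a)$. The uniform hyperbolicity of $\cR$ over $\{R_\alpha\mid\alpha\in\cB_C\}$ asserted in Theorem~\ref{th-main}(3), combined with uniform bounds on the derivatives of $(\mu,a)\mapsto\tilde f_{\mu,a}$ in a fixed polydisk (independent of $\alpha\in\cB_C$), produces a common lower bound $r_C$ on the radius on which each $\mu_\alpha$ is defined. Disjointness of $\{W_\alpha\}_{\alpha\in\cB_C}$ is automatic because the rotation number of a Herman ring is a dynamical invariant. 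The main technical step I expect is the wideness verification in the previous paragraph: ensuring that for $(\mu,a)$ sufficiently close to $(e^{2\pi i\alpha},0)$ the Herman ring of $f_{\mu,a}$ is large enough to contain the exponential image of a substrip of $\Pi_{\eps/3}$, so that Theorem~\ref{th-main}(3) applies in the forward direction uniformly over $\alpha\in\cB_C$. The complex-analytic setting here parallels the real-analytic argument that yields Corollary~\ref{cor-tongue} for Arnold's tongues.
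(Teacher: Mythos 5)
Your overall architecture is the same as the paper's: the paper disposes of Corollary~\ref{cor-disk} by the same scheme it uses for Corollary~\ref{cor-tongue}, namely identifying the Arnold disk (near the rotation locus) with the intersection of the lifted analytic family with the stable leaf/manifold of $R_\alpha$, using that the $\mu$-direction is along translations (the unstable direction) and that the leaves $\cV_\alpha$ are analytic graphs with a radius $\kappa$ uniform over $\cB_C$, which is what produces $r_C$. Your lifting through $z=e^{2\pi i w}$, the computation that $\partial_\mu\tilde f$ is a nonzero constant vector field, the implicit-function-theorem step, and the uniformity argument are all consistent with that.

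The genuine weak point is the forward inclusion, which you yourself flag: you claim that for $(\mu,a)\in W_\alpha$ close to $(e^{2\pi i\alpha},0)$ the Herman ring is wide enough to contain the exponential image of a substrip of $\Pi_{\eps/3}$, and you justify this by ``a continuity argument rooted in the fact that $f_{e^{2\pi i\alpha},0}$ linearizes on all of $\CC^*$.'' Continuity of linearization domains under perturbation is false in general (Siegel disks and Herman rings can implode under arbitrarily small perturbations), so this cannot be invoked as stated, and a soft continuity argument would in any case not give the uniformity in $\alpha\in\cB_C$ that the corollary requires. The paper's mechanism for exactly this step is quantitative rather than soft: for the Arnold family it is Yoccoz's Theorem~\ref{th-Yoc2} (which yields a conjugacy on a strip of definite width depending only on $C$ and $\eps$) followed by Theorem~\ref{th:contraction}, and in the renormalization picture the relevant tool is Lemma~\ref{lem-conj} (equivalently the last bullet of Theorem~\ref{th-main}(3)), which only requires a conjugacy on an arbitrarily \emph{thin} substrip of $\Pi_{\eps/3}$ --- so ``wideness'' is not the issue; what you must actually establish is that the fixed Herman ring meets the annulus corresponding to $\Pi_{\eps/3}$ essentially, i.e.\ a localization statement for the ring when $(\mu,a)$ is near $(e^{2\pi i\alpha},0)$, with bounds uniform over $\cB_C$. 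That localization needs an argument (e.g.\ via the quantitative Yoccoz estimate once an invariant essential curve of rotation number $\alpha$ is produced inside the annulus of univalence, or via the geometry of $f_{\mu,a}$ and the modulus estimates of \cite{bfgh}); replacing your continuity claim by such an argument, or by the paper's Theorem~\ref{th:contraction}/Lemma~\ref{lem-conj} route, closes the gap. The rest of your proposal (backward inclusion, transversality, disjointness via invariance of the rotation number, uniform radius) is sound.
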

Compare this to \cite[Theorem A, Theorem B]{bfgh}. For Blaschke products, Theorem A  shows that each Arnold disc is a codimension-1 analytic submanifold of $\bbC^2$, and provides a parametrization of this manifold in terms of the geometric properties of the Herman ring. Theorem B shows that locally, near rotations, the Arnold disc can be parametrized by $a$.  Corollary \ref{cor-disk} above provides a uniform estimate on the projection of this disc to $a$ for $a\in \mathcal B_{\{s_k\}}$.

As the reader will see in the next section, our definition of renormalization largely parallels that of Yoccoz \cite{Yoccoz2002}, and is based on a carefully selected conformal rescaling of a first return map of an analytic map which is close to a rotation. We have to be more careful with the choice of the rescaling, so that the renormalization transformation becomes a well-defined complex-analytic operator in a neighborhood of the rotations. There are multiple advantages to viewing renormalization as an operator in a functional space, which go beyond the streamlined proof of Risler's theorem. For instance, since the preprint version of this paper appeared, the second author was able to extend the hyperbolic action of the renormalization operator to dissipative maps of two  complex variables in \cite{KAM-yam}, and proved a result on persistence of Herman rings for small 2D perturbations of one-dimensional maps.

\section{Defining renormalization}
\label{sec-def}
\subsection{Family of renormalization transformations}
 Here we define a family of renormalization transformations $\mathcal R_{w,n}$; later we will fix the choice of $w$ and $n$ to define the renormalization operator $\mathcal R$.

Fix any $\eps\in \bbR^+$. Let $\{p_l/q_l\}$ be the sequence of the continued fraction convergents of $\alpha$, and let $n=q_{m}$ for some $m$. Consider a map $f\in \mathcal D_\eps$ that is close to a  rotation $R_\alpha$, $\alpha\in\bbR/\bbZ$, with $\alpha\notin \bbQ$  (so that, in particular, $f$ is univalent).
Fix $w\in \bbR^+$;  $w$ stands for width. Let $l=\{n\alpha\}$ and $L = f^n(0)-0$; note that $L\approx l$ if $f$ is close to $R_\alpha$.

Assume $l\cdot w<\eps$. Consider a segment $I = [-i w L, i w L] \subset \Pi_\eps$; this segment is close to vertical. Let $R$ be the curvilinear rectangle bounded by $I$, $f^n(I)$, and two straight segments joining their endpoints. If  $f\in \mathcal D_{\eps}$ is sufficiently close to a rotation, these four curves are simple and bound a domain in $\Pi_{\eps}$. We will assume that $R$ includes $I$, $f^n(I)$ and does not include the straight segments that join endpoints of $I, f^n(I)$;
%\fixme{rev Y comm 9}
then  $f^n(R)\cap R = f^n(I)$.

Consider a biholomorphic map $\Psi \colon R \to \bbC$, $\Psi(0)=0$, that conjugates $f^n$ to the shift by one and is defined on $R$. We will see in Proposition \ref{prop-Psi-estim}  below that $\Psi$ can be chosen so that it depends analytically on $f$ and is close to the linear expansion $z\mapsto \frac{z}{l}$ whenever $f$ is close to $R_\alpha$. The map $\Psi$ descends to the map $\widetilde \Psi\colon R/f^n \mapsto \bbC/\bbZ$.

Let $P$ be the first-return map to $R$ under the iterates of $f$.

\begin{definition}
\textbf{Renormalization transformation} of $f\in \mathcal D_{\eps}$ is
$$\mathcal R_{w, n} f \equiv \widetilde \Psi P \widetilde \Psi^{-1}.$$
\end{definition}

Since the first-return  map $P$ descends to the  continuous, analytic, and univalent map on a subdomain of the annulus $A=R/f^n$, the map $\Psi P \Psi^{-1}$ induces an analytic univalent map  on a subset of $\bbC/\bbZ$.
For $f$ close to the rotation $R_\alpha$, $\Psi$ is close to $z\mapsto z/l$, thus the domain of $\mathcal R_{w,n} f$ is close to the strip $\Pi_w$.

The following proposition describes the construction and properties of the chart $\Psi$.
\begin{proposition}
\label{prop-Psi-estim}
Let $\eps, w, n$ be fixed, $l \cdot w <\eps$.

There exists a neighborhood  $\mathcal U_\alpha \subset \mathcal D_\eps$ of $R_\alpha$ such that for each $f\in \mathcal U_\alpha\subset \mathcal D_\eps$, there exists a biholomorphic map $\Psi$ with the following properties.
\begin{enumerate}

% \item There exists $c$ such that whenever $\dist (f, R_\alpha)< c\{\alpha\}$ for some $\alpha$, $\Psi$ is well-defined (for the iterate $f^n$ of $f$, where $n=n(\alpha)$ is constructed above).

\item \label{it-unif-domain} $\Psi$ conjugates $f^n$ to the unit translation $z\mapsto z+1$ and is defined on the quadrilateral $R$.

\item \label{it-real} If $f$ preserves $\bbR/\bbZ$, then so does $\Psi$.

\item \label{it-lin} If $f=R_\alpha$, then $\Psi(z)=z/l$. If $f$ is sufficiently close to $R_\alpha$ in $C(\Pi_{\eps})$, then $\Psi$ is close to $z\mapsto z/l$ in $C(R)$.

 \item \label{it-an-depend} Consider a family $\zeta\mapsto f_\zeta\in \mathcal D_\eps$, $\zeta\in \bbC^m$, that depends analytically on $\zeta$ with $f_0=R_\alpha$.
%   Put $v=\frac{d}{d\zeta}f|_{\zeta=0}$.
 Then $\Psi$ depends analytically on $\zeta$ for $\zeta$ in some neighborhood of zero.%\fixme{rev Y comm 10}.

% Moreover,
%  $$\sup_{\tilde R} |\Psi''/\Psi'| \le c(\eps) \dist_{C(\Pi_\eps)}(\xi, id)$$, where $c(\eps)\to 0$ as $\eps\to \infty$. The function $c(\cdot)$ depends on $\alpha$.

%  \item \label{it-deriv} Consider a family $f_\zeta \in \mathcal D_\eps$ such that  $f = \xi_\zeta^{-1}R_{\alpha} \xi_\zeta$ in $\Pi_{\eps/2}$ and $\xi_\zeta$  depends analytically on $\zeta$, $\xi_0=id$.
%   Put $h=\frac{d}{d\zeta}\xi|_{\zeta=0}$.

%  Then $\Psi$ depends analytically on $\zeta$ and
%  $$\|\Psi'_{\zeta}|_{\zeta=0}\|_{C(\tilde R)}<c(\eps) \|v\|_{C(\Pi_{\eps})};$$

%  $$\|\Psi'''_{zz\zeta}|_{\zeta=0}\|_{C(\tilde R)}<C(\eps)l^{-2} \|h\|_{C(\Pi_{\eps})}, \quad C(\eps)\to 0 \text{ as } \eps\to \infty,$$
%  where $C(\eps)$  depends on $\eps$ only.

\end{enumerate}

\end{proposition}
\begin{proof}

 $\;$ \\\noindent
  \textbf{Step 1. Construction of $\Psi$ via solutions of the Beltrami equation;  item \eqref{it-unif-domain}}

  \smallskip

\noindent
{\sl Step 1.1. Stretching the fundamental domain. }
\\
The map $\Psi$ will be constructed as $\Psi = \Theta (z/L)$ where $\Theta$ is close to identity. Note that $\Theta$ should be a holomorphic map that conjugates $g(z):=f^n(Lz)/L$ to the shift by $1$.
Clearly, if $f$ is close to $R_\alpha$, then $g$ is close to $z\mapsto z+1$.

\begin{figure}[h]
\begin{center}
  \begin{tikzpicture}
 \begin{scope}[rotate=10]
  \draw (0, -0.5 ) -- (0, 0.5);
 \draw plot [smooth] coordinates { (1, -0.6) (1.1, 0) (0.95, 0.25) (1, 0.6) };
\draw (0, 0.5)  --(1, 0.6);
\draw (0, -0.5)  --(1, -0.6);

\draw (0,0) node [left] {$0$} -- (1.1, 0) node [right]{$L$};

 \draw [-> ] plot [smooth] coordinates { (-0.1, 0.3) (-0.3, 0.4) (-0.2, 0.7) (0.5, 0.8)  (1.2, 0.7) (1.3, 0.4) (1.1, 0.3)};
 \draw  (0.5, 0.8) node [above]{$f^n$};

 \end{scope}

 \draw [->, line width = 1](0.5, -0.7) -- (0.5, -1.7);
\draw (0.5, -1) node [left]{$\cdot \frac 1L$};

  \begin{scope}[yshift = -80]
  \draw (0, -0.5 ) -- (0, 0.5);
 \draw plot [smooth] coordinates { (1, -0.6) (1.1, 0) (0.95, 0.25) (1, 0.6) };
\draw (0, 0.5)  --(1, 0.6);
\draw (0, -0.5)  --(1, -0.6);
 \draw [-> ] plot [smooth] coordinates { (-0.1, 0.3) (-0.3, 0.4) (-0.2, 0.7) (0.5, 0.8)  (1.2, 0.7) (1.3, 0.4) (1.1, 0.3)};
 \draw  (1.3, 0.4) node [above  right]{$g$};

\draw (0,0) node [left] {$0$} -- (1.1, 0) node [right]{$1$};
 \end{scope}

  \draw [->, line width = 1](4, -2.8) -- (2, -2.8);
  \draw (3, -2.8) node [above]{$H$};

  \begin{scope}[yshift = -80, xshift = 130]
  \draw (0, -0.5 ) -- (0, 0.5);
\draw (0, 0.5)  --(1, 0.5);
\draw (0, -0.5)  --(1, -0.5);
\draw (1, -0.5) -- (1, 0.5);
 \draw [-> ] plot [smooth] coordinates { (-0.1, 0.3) (-0.3, 0.4) (-0.2, 0.7) (0.5, 0.8)  (1.2, 0.7) (1.3, 0.4) (1.1, 0.3)};
 \draw  (1.3, 0.4) node [above  right]{$+1$};
 \draw (0.3, -0.2) node {$\mu$};
\draw (0,0) node [left] {$0$} -- (1, 0) node [right]{$1$};
 \end{scope}

   \draw [->,  line width = 1](5, -1.8) -- (5, -0.8);
  \draw (5, -1.3) node [left]{$F$};

  \draw (5, 0)  node {$\mathbb C$};

   \draw [->, line width = 1](2, 0) -- (4, 0);
\draw (3, 0) node [above]{$\Psi$};

\draw  [->, line width = 1] (1.5, -1.7) -- (4, -0.5);
\draw (2.5, -1.1) node [above ]{$\Theta$};

\end{tikzpicture}
\end{center}
\caption{The commutative diagram: the construction of $\Psi$.}\label{fig-cd}

\end{figure}
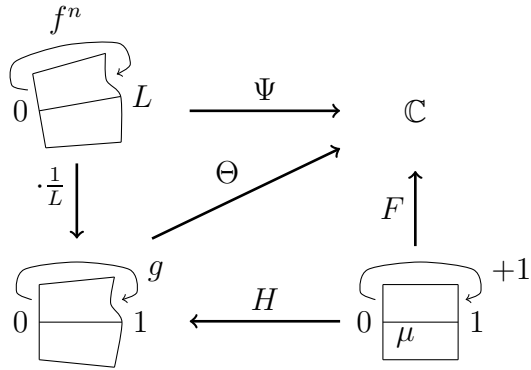

  \smallskip

\noindent
{\sl Step 1.2. Smooth straightening of the fundamental domain.}
\\
Put $Q = [0,1]\times[-i w, i w]$ and consider the following smooth map $H \colon Q \to \bbC$:
$$H(t+is) = (1-t)\cdot is   +  t\cdot g(is).$$
This map takes horizontal segments $[0+is, 1+is]$ to slanted segments $[is, g(is)]$, thus $H^{-1}$  conjugates $g$ to the horizontal shift $z\mapsto z+1$; also, $H(0)=0$.

Note that if $g$ is close to the shift by $1$ in $\Pi_{\eps/l}$ (which is the case if $f$ is close to $R_\alpha$ in $C(\Pi_\eps)$),   then $H$ is a well-defined $C^2$-smooth map inside $Q$, and is close to identity in $C^2(Q)$.
% uniformly on $f, \alpha$.
Indeed, differentiation with respect to $t,s$ yields
$$H'_t = -is + g(is), \qquad H'_s = i(1-t)+itg'(is), $$
$$
H''_{tt} = 0, \quad H''_{ts} = i(-1+g'(is)), \quad H''_{ss} = -tg''(is).
$$
When  $f$ is close to $R_\alpha$ in $\Pi_\eps$,  $g$ is close to the shift by $1$ in $\Pi_{\eps/l}$, thus is $C^2$-close to the shift by $1$ in $Q$.  This implies that $H$ is one-to-one and is close to identity in $C^2(Q)$.

% Note also that if $f$ is conjugate to a rotation as in item \ref{it-conj}, then $g$ is $\kappa$-close to a shift as mentioned above, so $H$ is still quasiconformal. This statement covers the case when $f$ is close to a rotation by zero.

Let $$\mu:=\frac{H_{\bar z}}{ H_z}$$ be the Beltrami differential in the rectangle $Q$ that corresponds to the preimage of the standard conformal structure under $H$.
Extend $\mu$ periodically to the strip $\tilde Q=\{|\Im z|\le  w\}$ so that  $\mu(z+1)=\mu(z)$. Extend $\mu$ by zero to the rest of $\bbC$.
If $f$ is close to $R_\alpha$, then $\|\mu\|_{L_\infty(\bbC)}$ is small.

% Note that $\|\mu\|_{L_{\infty}(\tilde Q)}$ is small uniformly on $\alpha$, because $H$ is close to $z\mapsto lz$ uniformly on $\alpha$.
  \smallskip

\noindent
{\sl Step 1.3: Beltrami equation in $\bbC$}
\\
Consider the solution $F$ to the Beltrami equation $F_{\bar z}/F_z = \mu$ with $F(0)=0, F(1)=1, F(\infty)=\infty$. By Measurable Riemann Mapping Theorem, this equation uniquely defines a quasiconformal homeomorphism $F\colon \bbC \to \bbC$.
Since $\mu$ is $1$-periodic, $F$ conjugates the unit translation to itself.

  \smallskip

\noindent
{\sl Step 1.4: $\Psi(z) =  F  H^{-1} (z/L)$}
\\
 The map $\Theta:=F  H ^{-1}$ is quasiconformal and preserves the standard conformal structure  due to the construction of $F$, thus  is conformal.
 As we have noticed above, $F$ commutes with the shift $z\mapsto z+1$ and $H^{-1}$ conjugates $g$ to this shift, thus the map  $\Theta = FH^{-1}$ conjugates $g$ to $z\mapsto z+1$. %\fixme{rev Y comm 11; also added the picture}

Set $$\Psi = \Theta(z/L)$$ (see Fig.\ref{fig-cd}). Then the map $\Psi$ is conformal and conjugates $f^n$ to to $z\mapsto z+1$. It is defined  in  the curvilinear quadrilateral $R$ bounded by the segment $I = [-iw L, i w L]$, its image $f^n(I)$, and two straight segments joining their endpoints. We can also extend $\Psi$ by the dynamics of $f^n$ and $f^{-n}$ to $f^n(R), f^{-n}(R)$.

Note that the above implies that $F$ is a $C^2$-smooth map because $\Theta$ is holomorphic and $H$ is $C^2$-smooth.

  \smallskip

\noindent
\textbf{Step 2: For real-symmetric $f$, $\Psi$ is real-symmetric (item \eqref{it-real})}

\noindent
Note that in this case, $L$ is real, $H$ preserves the real axis and commutes with the involution $\mathcal I\colon z\mapsto\bar z$. Thus the conformal structure associated with $\mu$ also commutes with $\mathcal I$. We conclude that $F$ commutes with $\mathcal I$.  Since $\Psi = FH^{-1}(z/L)$, this implies the statement.

  \smallskip
\noindent
\textbf{Step 3: if $f$ is close to the rotation, then $\Psi$ is close to linear (item \eqref{it-lin}).}

As we noted above, if $f $ is close to $R_\alpha$, then $H$ is close to the identity in $C^2(Q)$, thus $\|\mu\|_{L^{\infty}}$ is small. This implies that $F$ is close to the identity in a bounded domain $Q$, so $\Psi(z)=FH^{-1}(z/L)$ is close to $z\mapsto z/l$ in $R$.

\smallskip

\noindent
\textbf{Step 4. Analytic dependence on parameter (item \eqref{it-an-depend}).}

\noindent
Suppose that $f$ depends analytically on $\zeta$. Then $L=f^n(0)$ depends analytically on $\zeta$. Hence  $H, \mu$ depend analytically on $\zeta$.   Due to Ahlfors-Bers-Boyarski Theorem \cite[Theorem 11]{Ahl-B}, the solution of the Beltrami equation $F$ depends analytically on $\zeta$ as an element of the Banach space $B_{R,p}$; since it is $C^2$-smooth and we only use $F$ in the bounded rectangle $Q$, we get that $F$ depends analytically on $\zeta$ as a $C^2$ map.

However $H^{-1}$ does not depend analytically on $\zeta$, because the Jacobian $(H^{-1})'_\zeta = -  H'_\zeta (H'_z)^{-1}$ will not satisfy Cauchy-Riemann conditions\footnote{We thank Arnaud Ch{\'e}ritat for this comment.}. Nonetheless, $\Theta = FH^{-1}$ depends analytically on $\zeta$,  due to the following lemma.
\begin{lemma}
 If two $C^2$-smooth maps $g$, $h$ in $\bbC$  depend analytically on  a parameter $\zeta$ and $gh^{-1}$ is a holomorphic map, then it also depends analytically on $\zeta$.
\end{lemma}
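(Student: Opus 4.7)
The plan is to verify the single identity $\partial_{\bar\zeta} F \equiv 0$ for $F := g h^{-1}$ by a short chain-rule computation in Wirtinger variables; the three hypotheses of the lemma each enter at exactly one step. Set $\xi(w,\zeta) := h^{-1}(w,\zeta)$, so that $F(w,\zeta) = g(\xi(w,\zeta), \zeta)$.

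The first ingredient is holomorphy of $F$ in the spatial variable. Writing $g = F\circ h$ and applying the chain rule in $z$ at fixed $\zeta$ (using $F_{\bar w} = 0$) gives
$$g_z = F'(h)\,h_z, \qquad g_{\bar z} = F'(h)\,h_{\bar z}.$$
The second ingredient is analyticity of $g$ in $\zeta$. Applying the chain rule to $F(w,\zeta) = g(\xi,\zeta)$ in $\bar\zeta$ at fixed $w$, and using the Wirtinger identity $\partial_{\bar\zeta}(\bar\xi) = \overline{\xi_\zeta}$ together with $g_{\bar\zeta}=0$, I obtain
$$F_{\bar\zeta} = g_z\,\xi_{\bar\zeta} + g_{\bar z}\,\overline{\xi_\zeta} = F'(h)\bigl[h_z\,\xi_{\bar\zeta} + h_{\bar z}\,\overline{\xi_\zeta}\bigr].$$
The third ingredient --- analyticity of $h$ in $\zeta$ --- finishes the job: differentiating the defining identity $h(\xi(w,\zeta),\zeta) = w$ in $\bar\zeta$ at fixed $w$, and invoking $h_{\bar\zeta}=0$ together with $\partial_{\bar\zeta}(\bar\xi)=\overline{\xi_\zeta}$, one obtains exactly $h_z\,\xi_{\bar\zeta} + h_{\bar z}\,\overline{\xi_\zeta} = 0$. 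Hence $F_{\bar\zeta} \equiv 0$, which is the desired analytic dependence of $F$ on $\zeta$.

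I do not expect any substantive obstacle. The $C^2$ hypothesis on $g, h$ together with the diffeomorphism property of $h$ (so $h_z \ne 0$ and all the above chain rules are justified) takes care of the smoothness issues, and passage from the pointwise Cauchy--Riemann condition $F_{\bar\zeta}=0$ to Banach-valued analyticity is standard. The only real pitfall is bookkeeping in Wirtinger variables --- in particular, carefully distinguishing $\xi_{\bar\zeta}$ from $\overline{\xi_\zeta}$ --- which is exactly the phenomenon behind the preceding footnote's warning that $h^{-1}$ itself fails to be analytic in $\zeta$, even though the specific combination $g\circ h^{-1}$ is.
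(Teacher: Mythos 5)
Your proposal is correct and takes essentially the same route as the paper: the paper differentiates the identity $(gh^{-1})\circ h = g$ in $\zeta$ to get $\frac{d}{d\zeta}(gh^{-1}) = \frac{d}{d\zeta}g - \frac{d}{dz}(gh^{-1})\cdot\frac{d}{d\zeta}h$ and notes that this is complex-linear, which is exactly your Wirtinger-variable verification that $\partial_{\bar\zeta}(gh^{-1})=0$. The only difference is presentational --- you obtain the same cancellation by differentiating $h\circ h^{-1}=\mathrm{id}$ separately instead of differentiating $F\circ h=g$ in $\zeta$ directly --- so there is nothing substantive to add.
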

The proof of the lemma is a straightforward computation: we get $$\frac{d}{d\zeta}gh^{-1} = \frac{d}{d\zeta} g - \frac{d}{dz} (g h^{-1}) \cdot  \frac{d}{d\zeta}h,$$
which is a complex-linear map.
%and since $\frac{d}{d\zeta} g, \frac{d}{d\zeta} h$ satisfy Cauchy-Riemann conditions and so does $\frac{d}{dz} (g h^{-1})$, the statement follows.

We conclude that $\Psi$ depends analytically on $\zeta$.
%This proves the second part of the lemma.

\end{proof}

Renormalization transformations were used by Yoccoz (in the real-symmetric case) and by Risler.
 In particular, Yoccoz used them to reduce Theorem \ref{th-Yoc1} to Theorem \ref{th-Yoc2}. We summarize the main step of the reduction in the following proposition.
  \begin{proposition}{\bf (Yoccoz)}
    \label{prop-Yoc}
    Let $\{s_k\}, s_k\to 0$, be a decreasing sequence of positive numbers. For any positive $\eps, q$ there exists $m$ such that for any $\alpha\in\cB_{\{s_k\}}$, the value of  $\tilde \eps = \eps / \{q_m \alpha\}$ satisfies $$\tilde \eps > q (\Phi(\alpha_m)+1).$$
      For any $\eps', \kappa$ there exists $\nu$  such that for any $\alpha\in  \mathcal B_{\{s_k\}}$, the renormalization transformation $\mathcal R_{\tilde \eps, q_m}$, with $\tilde\eps$, $m$ as above, takes a $\nu$-neighborhood of $R_\alpha$  in $\mathcal D_{\eps}$ to a $\kappa$-neighborhood of the rotation $R_{\alpha_m}$ in $\mathcal D_{\tilde \eps-\eps'}$.

      The chart $\Psi$ can be made uniformly close to linear by diminishing $\nu$.
    \end{proposition}
The first statement is an easy observation on the Yoccos-Brjuno function. The rest of the statement follows from the fact that the chart $\Psi$ is close to linear for $f$ close to the rotation, see item \ref{it-lin} of Proposition \ref{prop-Psi-estim}.  Note that Proposition \ref{prop-Yoc} does not assume real symmetry.

We will outline the reduction of Theorem \ref{th-Yoc1} to Theorem \ref{th-Yoc2} from \cite{Yoccoz2002}.
Use Proposition \ref{prop-Yoc} to guarantee that the circle map $\mathcal R_{\tilde \eps, n} f$ is defined in a strip of width $\tilde \eps-\eps' \gg \Phi(\alpha)+C_0$. Now, Theorem \ref{th-Yoc2} implies that the map $\mathcal R_{\tilde \eps, n} f$ is conjugate to the rotation in the strip of width $\tilde \eps \cdot 2/3$. Since $\Psi$ is close to linear, the initial map $f$ is conjugate to the rotation in the strip of width $\eps/2$.  This implies Theorem \ref{th-Yoc1}. 

We will use a similar argument when deriving Theorem \ref{th-submanif} from the Main Theorem.

\subsection{The choice of $n$ and the exceptional set $K$}
\label{sec-domain}
  For a number $\alpha\in \bbR/\bbZ$,
  we define $n(\alpha):=q_m$ where $q_m $ is the denominator of the first continued fraction convergent of $\alpha$ that satisfies
  $$0<q_m\alpha-p_m<0.01.$$  It is easy to see that $$0<\{\alpha n(\alpha)\}<0.01.$$
  \begin{remark}
    %\fixme{rev Y comm 8}
The function $n(\alpha)$ defined above is locally constant everywhere except at countably many rational points that only accumulate at the rational numbers $p/q$ with $q\le 100$.
  \end{remark}
  \begin{proof}
    The function is locally constant at irrational points. Consider $\alpha_0=p/q$ with $q>100$. 
    Then $n(\cdot)$, if not continuous, has a jump discontinuity at $\alpha_0$. Indeed, note that $p/q$ is a continued fraction convergent of $\alpha\approx p/q$. Now, for $\alpha < p/q$, $\alpha\approx p/q$, the number $p/q$ determines both the continued fraction convergents of $\alpha$ that precede $p/q$ and the location of the points $(R_{\alpha})^s(0), 0\le s\le q,$ inside or outside of $(0,0.01)$. Since at least one of these points belongs to $(0, 0.01)$ (this follows from $q>100$), the inequality $0<\alpha q_l-p_l<0.01$ holds for some $q_l\le q$, and the value of $n(\alpha)$ is determined by this information. The same applies to $\alpha>p/q, \alpha\approx p/q$; thus $n(\alpha)$ is constant on both semi-neighborhoods of $p/q$.

    This leaves only the numbers $p/q$ with $q\le 100$ as possible accumulation points of the discontinuities of $n(\cdot)$, and it is trivial to see that the discontinuities, indeed, accumulate at each of these points.

  \end{proof}

  The points of discontinuity of $n(\alpha)$ will  constitute the set $K$ in Theorem \ref{th-main}. On the set $$ \mathcal T=\{R_\alpha \mid \alpha\in (\bbR/\bbZ)\setminus K\}$$ the function $n(\cdot)$ is continuous and locally constant.

%   For each connected component $J\subset \mathcal T=\{R_\alpha \mid \alpha\in (\bbR/\bbZ)\setminus K\}$, consider its neighborhood $\mathcal U_J$ in $\mathcal D_\eps$ so that neighborhoods of different intervals do not intersect. The choice of $\mathcal U_J$ is described in the Technical lemma below. The union $\mathcal U = \bigcup_J U_{J}$ of these neighborhoods will be the domain of $\mathcal R$.

  Recall that the domain of $\mathcal R$ will be a neighborhood $\mathcal U$ of $ \mathcal T$. We assume that this neighborhood is a union of disjoint neighborhoods of connected components of $\mathcal T$. Then $n(\cdot)$ extends as a continuous locally constant function on $\mathcal U$.

   For $f\in \mathcal U$, we write $n=n(f)$ and use a fundamental domain of $f^n$ in the renormalization construction for $f$ below.

  \subsection{Renormalization operator}

Fix any $\eps\in \bbR^+$.
Here we define the operator $\mathcal R\colon \mathcal D_{\eps}\to \mathcal D_{1.5\eps}$ \footnote{Our main result deals with its restriction $\mathcal R\colon \mathcal D_{\eps}\to \mathcal D_{\eps}$.}.

% Suppose that $f\in \mathcal D_\eps$ is close to a  rotation $R_\alpha$, $\alpha\in\bbR/\bbZ$, with $\alpha\neq 0$ (in particular, $f$ is univalent).
% Consider a segment $I \subset \Pi_{\eps}$ that contains zero. For the choice of $I$, see the Technical lemma below; this segment is close to vertical if $f$ is close to a rotation. Let $R$ be the curvilinear rectangle bounded by $I$, $f^n(I)$, and two straight segments joining their endpoints. Here $n=n(f)$ is defined in Sec.\ref{sec-domain} above. If  $f\in \mathcal D_{\eps}$ is sufficiently close to a rotation, these four curves are simple and bound a domain in $\Pi_{\eps}$. We will assume that $R$ includes $I$, $f^n(I)$ and does not include the straight segments that join endpoints of $I, f^n(I)$;
% %\fixme{rev Y comm 9}
% then  $f^n(R)\cap R = f^n(I)$.

% Consider a biholomorphic map $\Psi \colon R \to \bbC$, $\Psi(0)=0$, that conjugates $f^n$ to the shift by one and is defined on $R$. The Technical lemma below shows that $\Psi$ can be chosen so that it depends analytically on $f$ and is close to the linear expansion $z\mapsto \frac{z}{\{n\alpha\}}$ whenever $f$ is close to a  rotation $R_\alpha$, $\alpha\in\bbR/\bbZ\setminus K$. The map $\Psi$ descends to the map $\tilde \Psi\colon R/f^n \mapsto \bbC/\bbZ$.

% Let $P$ be the first-return map to $R$ under the iterates of $f$.

\begin{definition}
\textbf{Renormalization} of $f\in \mathcal D_{\eps}$ is
$$\mathcal R f := \mathcal R_{2\eps, \,n(f)}\, f|_{\Pi_{1.5\eps}}.$$
\end{definition}

%
% Since the first-return  map $P$ descends to the  continuous, analytic, and univalent map on a subdomain of the annulus $A=R/f^n$, the map $\Psi P \Psi^{-1}$ induces an analytic univalent map  on a subset of $\bbC/\bbZ$.
% If  $f$ is close to a rotation $R_\alpha$, $\alpha\in\bbR/\bbZ$, then the domain of $P$ is close to $R$,  and as we will prove below,  $\Psi$ is close to the linear expansion $z\mapsto \frac{z}{\{n\alpha\}}$; thus the map $\widetilde \Psi P \widetilde \Psi^{-1}$ is well-defined in the strip $\Pi_{1.5\eps}$.

 \begin{remark}
 For a rotation, its renormalization is again a rotation.
 \end{remark}

% The proof of the fact that $\mathcal Rf$ is well-defined as an element of $\mathcal D_{1.5\eps}$  for $f$ close to rotations and the proof of the fact that $\mathcal R$ is a compact analytic operator rely on Proposition \ref{prop-Psi-estim}.
 By Proposition \ref{prop-Psi-estim}, the restriction
 $$\cR:\cD_\eps\to\cD_\eps$$
 is an analytic operator; by an application of Koebe Distortion Principle and considerations of equicontinuity, it is a compact operator on a neighborhood of rotations. 
 As a first step towards proving hyperbolicity of $\cR$, let us formulate the following Technical Lemma, which is a quantitative refinement of Proposition \ref{prop-Psi-estim}:

\subsection{Technical Lemma: chart $\Psi$}
Recall that $l=\{n\alpha\}$; due to the choice of $n$ in Sec. \ref{sec-domain}, we have $l<0.01$.
Recall that $f^n(0)-0 = L$.
Finally, recall that we have chosen $w=2\eps$ in the definition of $\mathcal R$; hence the curvilinear rectangle  $R$ is bounded by the segment $I=[-2i\eps L, 2i\eps L]$, its image  $f^n(I)$, and two straight segments joining their endpoints.   Put $$\tilde R:= R\cap \{|\Im z|<1.5\eps l\}.$$

\begin{techlemma}
\label{lem-Psi-estim}
Let $\eps$ be fixed.

There exists a neighborhood  $\mathcal U \subset \mathcal D_\eps$ of $\mathcal T$ such that
for each $f\in \mathcal U\subset \mathcal D_\eps$, the biholomorphic map $\Psi$ from Proposition \ref{prop-Psi-estim} satisfies the following.
\begin{enumerate}

% \item \label{it-unif-domain} $\Psi$ conjugates $f^n$ to the unit translation $z\mapsto z+1$ and is defined on the quadrilateral $R$.

% \item \label{it-real} If $f$ preserves $\bbR/\bbZ$, then so does $\Psi$.

\item \label{it-lin-ref}
% For each $\alpha$, if $f$ is sufficiently close to $R_\alpha$ in $C(\Pi_{\eps})$, then $\Psi$ is close to $z\mapsto z/l$ in $R$. Moreover,
For $\eps>1$ we have
$$\|\Psi(zL) - z\|_{C(R)} \le c\eps^{1-2/p} \dist_{C(\Pi_{0.1\eps})}(f^n, R_{n\alpha})$$
where $p>2$ and  $c$ are universal constants.

%  \item \label{it-an-depend}

%  Then $\Psi$ depends analytically on $\zeta$ for $\zeta$ in some neighborhood of zero%

 \item \label{it-deriv}
 Consider a family $\zeta\mapsto f_\zeta\in \mathcal D_\eps$, $\zeta\in \bbC^m$, that depends analytically on $\zeta$ with $f_0=R_\alpha$.
 Then
%  Moreover,
  $$\|\Psi'_{\zeta}|_{\zeta=0}\|_{C(R)}<C(\eps) \|(f^n)'_{\zeta}|_{\zeta=0}\|_{C(\Pi_{\eps/2})},$$ and for $\eps>1$,
\begin{equation*}
 \label{eq-zzzeta}
 \|\Psi'''_{zz\zeta}|_{\zeta=0}\|_{C(\tilde R)}< c \eps^{-0.5}l^{-2} \|(f^n)'_{\zeta}|_{\zeta=0}\|_{C(\Pi_{\eps/2})},
\end{equation*}
 where $c$ is a universal constant.

\item \label{it-conj} Suppose that $f$ is analytically conjugate to a rotation,  $f = \xi R_{\alpha} \xi^{-1}$ for some biholomorphic map $\xi\colon \Pi_{\eps/2}\to \bbC/\bbZ$. We do not assume $f\in \mathcal U$.

There exists $\theta=\theta(\eps)$ such that if $f$ is conjugate to a rotation as above and
$$ \|\xi - id\|_{C^1(\Pi_{\eps/2})} < \theta,$$
then $\Psi$ is well-defined for all annuli  maps close to $f$ and satisfies assertions of Proposition \ref{prop-Psi-estim}. Moreover,
$$
\|\Psi(Lz) - z\|_{C(R)} \le c(\eps) \theta.
$$

\end{enumerate}

\end{techlemma}
  Item \eqref{it-lin-ref} is a quantitative version of item \eqref{it-lin} in Proposition \ref{prop-Psi-estim}. Roughly speaking, item \eqref{it-deriv} strengthens this statement.  Namely, \eqref{it-lin-ref} shows that $\Psi$ is close to linear; \eqref{it-deriv} is an infinitesimal version of the same statement, and can be understood as  a bound on the derivative of the mapping $f\mapsto \Psi$ both in the standard metric and in the metric $\sup \|g''\|$ on the tangent space to the image. The latter metric  corresponds to the nonlinearity of $\Psi$. The term $l^{-2}$ in the last inequality in \eqref{it-deriv} is typical for second derivatives of maps that are close to linear expansions $z\mapsto z/l$.

\begin{proof}

Below we use the same notation as in Proposition \ref{prop-Psi-estim}.

$\;$ \\\noindent
\textbf{If $f$ is sufficiently close to $R_\alpha$ for a fixed $\alpha$, then $\Psi$ is close to $z\mapsto z/l$ (item \eqref{it-lin-ref}).}

% As we noted above, if $f $ is close to $R_\alpha$, then $H$ is close to the identity in $C^2(Q)$, thus $\|\mu\|_{L^{\infty}}$ is small. This implies that $F$ is close to the identity in a bounded domain $Q$, so $\Psi(z)=FH^{-1}(z/L)$ is close to $z\mapsto z/l$ in $R$.

Suppose that $\dist_{C(\Pi_{0.1\eps})} (f^n, R_{n\alpha}) =\kappa$.
We will show that $\mu$ is $c\kappa$-close to zero and use an appropriate estimate from  \cite{Ahl-B}. In this step, $c$ is an arbitrary universal constant.

It is easy to prove that $g (z) = f^n(Lz)/L$ is $c\kappa$-close to the shift by $1$ in $C^2(I)$ for $\eps>1$. Indeed,
$$\dist_{C(R/L)}(g', 1) = \dist_{C(R)} ((f^n)', 1) \le c\kappa\eps^{-1},$$
$$\dist_{C(R/L)}(g'', 0) = L \dist_{C(R)} ((f^n)'', 0) \le c\kappa\eps^{-2}.$$
due to Cauchy estimates for $f^n$.
The estimate on $g'$ implies that for any $\eps$, $g$ is $c\kappa$-close to the shift by $1$ on $I = [-2i\eps, 2i\eps]$ because $g(0)=1$. Finally, $g$ is $c\kappa$-close to $z\mapsto z+1$ in $C^2(I)$, thus $$\dist_{C^2(Q)} (H, \text{id}) \le c\kappa, \qquad \|\mu\|_{L_\infty} \le c\kappa.$$
% This implies $$\|\mu\|_{L^{\infty}}<c\kappa.$$ Let us estimate  $\dist_{C} (F, \text{id})$.

Now, Theorem 8 in \cite{Ahl-B} implies that for a $\mu$-conformal homeomorphism $F$ on the plane with $F(0)=0,F(1)=1, F(\infty)=\infty$, we have that
$$\|F-\text{id}\|_{B_{R,p}} \le c\|\mu\|_{\infty}$$
where $c$, $p>2$ are  universal constants and  $B_{R,p}$ is a Banach space of functions in $D_R=\{|z|<R\}$ with the metric
$$\|w\|_{B_{R, p}} = \sup \frac{|w(z_1)-w(z_2)|}{|z_1-z_2|^{1-2/p}} + \left(\int_{D_R} |w|^p dxdy\right)^{1/p}.$$
Choosing $z_1=z, z_2=0$, we get $$\|F(z)-z\| \le c |z|^{1-2/p} \|\mu\|_\infty \le c \eps^{1-2/p} \kappa$$
in $Q = [0,1]\times[-2i\eps, 2i\eps]$ provided that $\eps>1$. Since $\Psi(z) = FH^{-1}(z/L)$, the result follows: $$|\Psi(Lz) - z|<c \eps^{1-2/p} \kappa\text{ in }R\text{ for }\eps>1.$$

% Finally, $$\dist(\Psi, \, z\mapsto z/l)$$ is small if $f$ is close to $R_\alpha$.
% \le c\eps^{1-2/p}\kappa\text{ in }R.$$

\smallskip

\noindent

$\;$ \\\noindent
\textbf{Estimates on the derivative of $\Psi$ with respect to the parameter (item \eqref{it-deriv})}

\noindent
As before, let $c$ denote any universal constant. Let $C(\eps)$ be any constant that depends on $\eps$ only. All functions $f, g, H, \mu, F, \Theta, \Psi$ and a constant $L=f^n(0)-0$ now depend on $\zeta$.

\begin{lemma}
\label{lem-g}
 The map $g (z)= f^n(Lz)/L $ satisfies
 $$\|g'_{\zeta}|_{\zeta=0}\|_{C(I)}\le c \|(f^n)'_{\zeta}|_{\zeta=0}\|_{C(\Pi_{\eps/2})}$$
 and
 $$\|g'_{\zeta}|_{\zeta=0}\|_{C(\Pi_\eps)}\le c \eps^{-1} \max(1,\eps)\|(f^n)'_{\zeta}|_{\zeta=0}\|_{C(\Pi_{\eps/2})},$$ $$  \|g''_{z\zeta}|_{\zeta=0}\|_{C(\Pi_\eps)}\le c \eps^{-1} \|(f^n)'_{\zeta}|_{\zeta=0}\|_{C(\Pi_{\eps/2})}.$$
\end{lemma}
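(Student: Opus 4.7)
The approach is to derive explicit formulas for $\dot g := \partial_\zeta g|_{\zeta=0}$ and $\partial_z\dot g$, and then bound them by a single mean‑value plus Cauchy estimate. Write $g_\zeta(z) = f^n_\zeta(L(\zeta)z)/L(\zeta)$ and recall that at $\zeta=0$, in the natural lift, $f^n(w) = w+L$, so $(f^n)'(w)\equiv 1$ and $\dot L = \partial_\zeta f^n_\zeta(0)|_{\zeta=0} = \dot{f^n}(0)$. Applying the quotient rule to the outer factor $1/L$ and the chain rule to $f^n_\zeta(L(\zeta)z)$, the contribution $z\dot L/L$ from the inner derivative and $-(z+1)\dot L/L$ from the outer derivative cancel, leaving the clean identity
\[
\dot g(z) = \frac{\dot{f^n}(Lz)-\dot{f^n}(0)}{L},\qquad \partial_z\dot g(z) = \partial_z\dot{f^n}(Lz).
\]
Producing this cancellation is the main (and essentially only) computational step.

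Now I reduce all three bounds to a single Cauchy estimate. Since $\dot{f^n}$ is analytic and $1$-periodic on $\Pi_{\eps/2}$, Cauchy's inequality (applied on a disk of radius $\eps/4$ around any point of $\Pi_{\eps/4}$) gives
\[
\|\partial_z\dot{f^n}\|_{C(\Pi_{\eps/4})} \le \frac{c}{\eps}\,\|(f^n)'_\zeta\|_{C(\Pi_{\eps/2})}.
\]
Because $L<0.01$, for every $z$ with $|\Im z|\le\eps$ the segment $[0,Lz]$ lies inside $\Pi_{\eps/4}$, so the mean value inequality applied to $\dot{f^n}$ along this segment yields
\[
|\dot g(z)| \le \frac{|z|}{1}\cdot\sup_{[0,Lz]}|\partial_z\dot{f^n}| \le \frac{c|z|}{\eps}\,\|(f^n)'_\zeta\|_{C(\Pi_{\eps/2})}.
\]

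The three claimed inequalities now follow by inserting the correct bound for $|z|$. On $I=[-2i\eps,2i\eps]$ we have $|z|\le 2\eps$, which cancels the factor $\eps^{-1}$ and gives the first estimate. On (a fundamental domain of) $\Pi_\eps$ we have $|z|\le 1/2+\eps \le 2\max(1,\eps)$, which gives the second estimate. For the third, the identity $\partial_z\dot g(z) = \partial_z\dot{f^n}(Lz)$ together with $Lz\in\Pi_{L\eps}\subset\Pi_{\eps/4}$ and the Cauchy estimate above immediately yields $\|g''_{z\zeta}\|_{C(\Pi_\eps)} \le c\eps^{-1}\|(f^n)'_\zeta\|_{C(\Pi_{\eps/2})}$.

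The only delicate point is the cancellation in the first paragraph: a priori both $\dot{f^n}(Lz)/L$ and the various $\dot L/L$ terms individually are of order $1/L$, which would give a useless estimate. The crucial observation is that these combine into the discrete difference $(\dot{f^n}(Lz)-\dot{f^n}(0))/L$, so that the mean value inequality recovers a factor of $L$ from the argument and converts the $1/L$ blow‑up into the harmless factor $|z|\cdot\|\partial_z\dot{f^n}\|$. Once the identity is in place, the rest is routine.
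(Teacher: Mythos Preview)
Your proof is correct and essentially the same as the paper's: you differentiate $g$ directly and obtain the identity $\dot g(z)=\bigl(\dot{f^n}(Lz)-\dot{f^n}(0)\bigr)/L$, while the paper differentiates $g'_z=(f^n)'(Lz)$ to get $g''_{z\zeta}|_{\zeta=0}=(f^n)''_{z\zeta}(lz)$ and then integrates from $g'_\zeta(0)=0$; both routes are equivalent via the substitution $w=Lz$ and rely on the same Cauchy estimate for $(f^n)''_{z\zeta}$ in terms of $(f^n)'_\zeta$. The cancellation you highlight is exactly what the paper encodes (more tersely) in the observation that the term $(f^n)''(zL)\cdot zL_\zeta$ vanishes at $\zeta=0$ and that $g'_\zeta(0)=0$.
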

\begin{proof}
Since
$ g'(z) = (f^n)'(zL)$, we have
$$ g''_{z\zeta} = (f^n)''(zL) \cdot zL_\zeta +  (f^n)''_{z\zeta}(zL); \qquad  g''_{z\zeta}|_{\zeta=0} = (f^n)''_{z\zeta}|_{\zeta=0}(lz).$$
Due to Cauchy estimates, $\|(f^n)''_{z\zeta}|_{\zeta=0}\|_{C(\Pi_{l\eps})} \le c \eps^{-1} \|(f^n)'_{\zeta}|_{\zeta=0}\|_{C(\Pi_{\eps/2})}$, so
$$\|g''_{z\zeta}|_{\zeta=0} \|_{C(\Pi_\eps)}\le c \eps^{-1} \|(f^n)'_{\zeta}|_{\zeta=0}\|_{C(\Pi_{\eps/2})},$$ and since $g'_{\zeta}(0)=0$, this implies
$$|g'_{\zeta}|_{\zeta=0} |\le c \|(f^n)'_{\zeta}|_{\zeta=0}\|_{\Pi_{\eps/2}}\text{ on }I.$$
$$|g'_{\zeta}|_{\zeta=0} |\le c\max (1, \eps)\eps^{-1} \|(f^n)'_{\zeta}|_{\zeta=0}\|_{C(\Pi_{\eps/2})}\text{ on }\Pi_\eps.$$

\end{proof}

\begin{lemma}
In the above notation,  $$\|\mu'_{\zeta}|_{\zeta=0}\|_{\infty} \le C(\eps) \|(f^n)'_{\zeta}|_{\zeta=0}\|_{C(\Pi_{\eps/2})}. $$ Moreover, $C(\eps)$ is bounded as $\eps \to \infty$.
\end{lemma}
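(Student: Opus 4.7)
The plan is to reduce the estimate for $\mu'_\zeta|_{\zeta=0}$ to the bounds on $g$ supplied by Lemma \ref{lem-g} through a one-step quotient-rule computation. First, I would compute the Wirtinger derivatives of $H$ from the definition $H(t+is) = is + t\bigl(g(is)-is\bigr)$. Using $\partial_z = \tfrac{1}{2}(\partial_t - i\partial_s)$ and $\partial_{\bar z} = \tfrac{1}{2}(\partial_t + i\partial_s)$, a direct calculation gives
\[H_z = \tfrac{1}{2}\bigl[1 + g(is) - is + t(g'(is)-1)\bigr], \qquad H_{\bar z} = \tfrac{1}{2}\bigl[g(is) - is - 1 - t(g'(is)-1)\bigr].\]
At $\zeta = 0$ the map $g$ is the unit translation, hence $H_z \equiv 1$ and $H_{\bar z} \equiv 0$, consistent with the fact that $H$ itself is the identity at $\zeta=0$.

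Next, I would differentiate $\mu = H_{\bar z}/H_z$ in $\zeta$ at $\zeta=0$. Because $H_{\bar z}|_{\zeta=0}=0$ and $H_z|_{\zeta=0}=1$, the quotient rule collapses to
\[\mu'_\zeta|_{\zeta=0}(t+is) \;=\; (H_{\bar z})'_\zeta|_{\zeta=0}(t+is) \;=\; \tfrac{1}{2}\Bigl[\,g'_\zeta|_{\zeta=0}(is) \;-\; t\,g''_{z\zeta}|_{\zeta=0}(is)\,\Bigr].\]
On $Q = [0,1]\times[-2i\eps, 2i\eps]$ the parameter $t$ is bounded by $1$ and the points $is$ range over the segment $I$. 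Applying the first inequality of Lemma \ref{lem-g} to bound $g'_\zeta$ on $I$ and the third inequality of Lemma \ref{lem-g} to bound $g''_{z\zeta}$ on $\Pi_\eps \supset I$ then yields
\[\|\mu'_\zeta|_{\zeta=0}\|_{L_\infty(Q)} \;\le\; c\,(1+\eps^{-1})\,\|(f^n)'_\zeta|_{\zeta=0}\|_{C(\Pi_{\eps/2})}.\]

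Finally, since $\mu$ is extended to $\tilde Q$ by $1$-periodicity and by zero outside $\tilde Q$, this $L_\infty$-bound on $Q$ upgrades to the same bound on all of $\bbC$. One may therefore take $C(\eps) = c\,(1+\eps^{-1})$, which tends to $c$ as $\eps\to\infty$ and is in particular bounded. The proof presents no substantive obstacle: the only ingredients are the two Wirtinger derivatives and the quotient-rule simplification at $\zeta=0$, while the differentiability of $H$ and $\mu$ in the parameter is ensured by the analytic dependence of $f$ on $\zeta$ together with the $C^2$-smoothness of $H$ established in Step 1.2.
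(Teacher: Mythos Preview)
Your proof is correct and follows essentially the same route as the paper: compute $\mu'_\zeta$ by the quotient rule, observe that at $\zeta=0$ it collapses to $(H_{\bar z})'_\zeta$ because $H_{\bar z}|_{\zeta=0}=0$ and $H_z|_{\zeta=0}=1$, and then invoke Lemma~\ref{lem-g}. One small correction: the inclusion $\Pi_\eps \supset I$ is false, since $I=[-2i\eps,2i\eps]$ extends to height $2\eps$ while $\Pi_\eps$ only reaches height $\eps$; however, the bound on $g''_{z\zeta}|_{\zeta=0}$ from Lemma~\ref{lem-g} in fact holds on $I$ as well (the proof there shows $g''_{z\zeta}|_{\zeta=0}(z)=(f^n)''_{z\zeta}|_{\zeta=0}(lz)$, and $lI\subset\Pi_{0.02\eps}$ lies well inside the region where the Cauchy estimate applies), so the argument goes through unchanged.
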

\begin{proof}

Note that
$$
H'_\zeta = t g'_{\zeta}(is), \qquad 2 H''_{\zeta \bar z} = g'_{\zeta} (is) + t g''_{z\zeta}(is).
$$
 Due to Lemma \ref{lem-g},
\begin{equation}
  \label{eq-H-zeta}
%  \|H'_{\zeta}|_{\zeta=0}\|_{C(Q)} \le C(\eps) \|(f^n)'_{\zeta}|_{\zeta=0}\|_{C(\Pi_{\eps/2})}, \\ \|H''_{\zeta \bar z, \zeta=0}\|_{C(Q)} \le  C(\eps) \|(f^n)'_{\zeta}|_{\zeta=0}\|_{C(\Pi_{\eps/2})}
 \max\left(     \|H'_{\zeta}|_{\zeta=0}\|_{C(Q)},\|H''_{\zeta \bar z}|_{\zeta=0}\|_{C(Q)}\right) \le  C(\eps) \|(f^n)'_{\zeta}|_{\zeta=0}\|_{C(\Pi_{\eps/2})}
\end{equation}
where $C(\eps)$ is bounded as $\eps\to \infty$.

\noindent
So
$$\mu'_\zeta = \left(\frac{H'_{\bar z}}{H'_z} \right)'_{\zeta} = \frac{H''_{\zeta \bar z}}{H'_z} - \frac{H'_{\bar z} H''_{z\zeta}}{(H'_z)^2}.$$
The second summand is zero for $\zeta=0$: $H_0(z)=z$ is holomorphic. The first summand is estimated from above by $C(\eps) \|(f^n)'_{\zeta}|_{\zeta=0}\|_{C(\Pi_{\eps/2})} $.

\end{proof}
Note that $\mu$ is infinitely differentiable with respect to $\zeta$ around $\zeta=0$,  and  each derivative is bounded on $\bbC$ (because $\mu$ is periodic in $\tilde Q$ and zero elsewhere). So the following  lemma from \cite{Ahl-B} applies to $\mu$:
\begin{lemma}[Lemma 22, \cite{Ahl-B}]
\label{lem-deriv-Beltrami}
For all $t$ in some open set $\Delta$, suppose that
$$
\mu(t+s) = \mu(t) + a(t) s + |s|\alpha(t,s)
$$
with $\|\alpha(t,s)\|_\infty \le c$ and $\alpha(t,s) \to 0$ almost everywhere as $s\to 0$. Suppose further that the norms $\|a(t+s)\|$ are bounded and that $a(t+s)\to a(t)$  almost everywhere for $s\to 0$. Then the solution $F^{\mu (t)}$ of the equation $F_{\bar z}/F_z = \mu(t)$ with  $F(0)=0, F(1)=1, F(\infty)=\infty$ has a development
$$F^{\mu(t+s)} = F^{\mu(t)} + \theta(t) s + |s| \gamma(t,s)$$
where $\|\gamma(t,s)\|_{B(R,p)}\to 0$ for $s\to 0$.
The derivative $\theta$ satisfies
$$|\theta(z)|\le c (1+|z|)\log(1+|z|) \|a\|_\infty.$$
\end{lemma}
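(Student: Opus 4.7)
The plan is to construct $\theta(z)$ explicitly via the Ahlfors--Bers integral representation for solutions of the Beltrami equation, and then estimate the resulting modified Cauchy kernel. First I would formally differentiate the identity $F^{\mu(t+s)}_{\bar z} = \mu(t+s)\,F^{\mu(t+s)}_z$ in $s$ at $s=0$. The putative derivative $\theta = \frac{d}{ds}F^{\mu(t+s)}|_{s=0}$ is expected to satisfy the linearized Beltrami equation
$$\theta_{\bar z} - \mu(t)\,\theta_z = a(t)\,F^{\mu(t)}_z,$$
together with the normalizations $\theta(0)=\theta(1)=0$ and a growth condition at infinity inherited from $F(\infty)=\infty$.

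Let $S$ denote the Beurling--Ahlfors singular integral operator. Since $\|\mu\|_\infty = k<1$, the operator $I-\mu S$ is invertible on $L^p$ for some $p$ slightly greater than $2$; this is the classical Calder\'on--Zygmund ingredient of Ahlfors--Bers theory. I would solve $(I-\mu S)\omega = a\,F^{\mu}_z$ to obtain $\omega\in L^p$ with $\|\omega\|_p \le c_k\,\|a\|_\infty\,\|F^{\mu}_z\|_p$, and then define
$$\theta(z) = -\frac{1}{\pi}\iint_{\bbC}\omega(\zeta)\,K(z,\zeta)\,dA(\zeta),$$
where the ``doubly subtracted'' Cauchy kernel
$$K(z,\zeta) = \frac{1}{\zeta-z}-\frac{1}{\zeta}-\frac{z}{\zeta(\zeta-1)}$$
is chosen so that $K(0,\zeta)=K(1,\zeta)=0$ and $K(z,\zeta)=O((|z|+|z|^2)/|\zeta|^3)$ as $|\zeta|\to\infty$. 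The identity $\partial_{\bar z}(\zeta-z)^{-1}=\pi\delta_z$ then confirms that $\theta$ solves the linearized equation with the correct normalization.

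The pointwise bound $|\theta(z)|\le c(1+|z|)\log(1+|z|)\|a\|_\infty$ is the main technical obstacle. I would split the integral at $|\zeta|=2|z|$. On the outer region, the asymptotic $K=O(|z|^2/|\zeta|^3)$ combined with H\"older (using $\omega\in L^p$ and its conjugate exponent $p'<2$) gives a contribution of order $|z|\cdot\|\omega\|_p$. On the inner region, the kernel carries a $(\zeta-z)^{-1}$ singularity, and the $L^{p'}$-norm of $K(z,\cdot)$ over $\{|\zeta|\le 2|z|\}$ scales like $|z|^{2/p'-1}\log|z|$, yielding another contribution of order $|z|\log|z|\cdot\|\omega\|_p$. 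Combining the two pieces and using $\|\omega\|_p\le c\|a\|_\infty$ gives the stated logarithmic bound; the $\log$ factor is exactly the borderline Hardy--Littlewood--Sobolev behavior of the modified kernel near $\zeta=z$.

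Finally, to control the remainder $\gamma(t,s)$, I would substitute $\mu(t+s) = \mu(t) + a(t)s + |s|\alpha(t,s)$ into the Neumann-series representation of $F^{\mu(t+s)} - F^{\mu(t)}$ obtained by iterating $\omega\mapsto \tilde a\,F^{\mu}_z + \mu S\omega$ with $\tilde a = \mu(t+s)-\mu(t)$. The term linear in $s$ reproduces $\theta(t)\,s$, while the residual $|s|\gamma(t,s)$ depends linearly on $\alpha(t,s)$ plus an $O(s)$ correction. Since $\|\alpha(t,s)\|_\infty$ is uniformly bounded and $\alpha(t,s)\to 0$ almost everywhere, dominated convergence applied inside the $B(R,p)$-seminorm forces $\|\gamma(t,s)\|_{B(R,p)}\to 0$ as $s\to 0$. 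This last step is routine once the kernel estimate from the previous paragraph is in place, and the hardest part of the entire plan is verifying the sharp $(1+|z|)\log(1+|z|)$ growth of $\theta$.
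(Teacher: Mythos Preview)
The paper does not prove this lemma at all: it is quoted verbatim as Lemma~22 of \cite{Ahl-B} and used as a black box. So there is no ``paper's own proof'' to compare against; what you have written is essentially a sketch of the original Ahlfors--Bers argument, which is the right thing to do if one wants to supply a proof.

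Your outline is sound and follows the classical route: linearize the Beltrami equation, invert $I-\mu S$ on $L^p$ for some $p>2$, represent $\theta$ via a doubly-subtracted Cauchy kernel enforcing $\theta(0)=\theta(1)=0$, and split the kernel estimate into near and far regions. One small comment: in the actual Ahlfors--Bers computation the representation of $\theta$ is usually written after pulling back by $F^{\mu}$ itself (so the integrand involves $a\circ (F^{\mu})^{-1}$ and the Jacobian of $F^{\mu}$), rather than by solving $(I-\mu S)\omega = a\,F^{\mu}_z$ directly; the two are equivalent, but the pulled-back version makes the logarithmic growth estimate cleaner because one is then estimating a standard Cauchy-type integral of an $L^\infty$ function against the kernel $\frac{w(w-1)}{\zeta(\zeta-1)(\zeta-w)}$, and the $\log$ arises transparently from integrating $|\zeta-w|^{-1}$ over a disk of radius $\sim|w|$. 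Your H\"older-based estimate on the inner region will work too, but the exponent bookkeeping is more delicate there; if you carry it out, make sure the $|z|^{2/p'-1}$ factor combines correctly with $\|\omega\|_p$ to give linear-times-log growth rather than a fractional power of $|z|$.
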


Due to our estimate on $\mu'_\zeta$, this lemma implies that $F$ is differentiable with respect to $\zeta\in \bbC$ at zero, and  $F'_\zeta(z) \le C(\eps) \|(f^n)'_{\zeta}\|_{C(\Pi_{\eps/2})}$. Since
\begin{equation}
\label{eq-Theta}
 \Theta'_{\zeta}|_{\zeta=0} = (FH^{-1})'_{\zeta}|_{\zeta=0} = F'_{\zeta}|_{\zeta=0} (z) +  (H^{-1})'_{\zeta}|_{\zeta=0},
\end{equation}
  $$ \Psi'_{\zeta}|_{\zeta=0} = \Theta'_{\zeta}|_{\zeta=0} (z/L) - z L'_{\zeta}|_{\zeta=0}/L^2, \qquad L'_{\zeta}|_{\zeta=0} = (f^n)'_{\zeta}|_{\zeta=0}(0),$$
 we use \eqref{eq-H-zeta} to get $$\|\Psi'_{\zeta}|_{\zeta=0}\|_{C(R)} \le C(\eps, \alpha) \|(f^n)'_{\zeta}|_{\zeta=0}\|_{C(\Pi_{\eps/2})}.$$

 Now estimate $\Psi'''_{zz\zeta}|_{\zeta=0}$ for $\eps>1$. Recall that  $|F'_{\zeta}(z)|$  is estimated in Lemma \ref{lem-deriv-Beltrami} above by $c  (1+\eps)\log(1+\eps)  \|(f^n)'_{\zeta}\|_{C(\Pi_{\eps/2})}$ in the disc $|z|<\eps$ that contains $Q$. So
\begin{equation}
\label{eq-F-zeta}
\|F'_{\zeta}\|_{C(Q)} \le c\eps^{1.5} \|(f^n)'_{\zeta}\|_{C(\Pi_{\eps/2})}.
\end{equation}
Using \eqref{eq-Theta} and \eqref{eq-H-zeta} again, we get
% $$\Theta'_{\zeta} = (FH^{-1})'_\zeta = F'_{\zeta} (H^{-1}(z)) +  (H^{-1})'_\zeta. $$ Thus due to  \eqref{eq-H-zeta}, \eqref{eq-F-zeta},
 $$\Theta'_{\zeta}|_{\zeta=0} \le c\eps^{1.5}  \|(f^n)'_{ \zeta}\|_{C(\Pi_{\eps/2})}$$ for  $\eps>1$.

For small $\zeta$, extend $\Theta$ via iterates of $g$ to the rectangle $\hat Q = [-\eps,\eps]\times[-2i\eps, 2i\eps]$; we need approximately $\eps$ iterates. Each iterate increases the estimate on $\Theta'_\zeta$ by $g'_{\zeta}$. Due to  Lemma \ref{lem-g}, on the rectangle $\hat Q$, we have $|g'_{\zeta} |\le c\|(f^n)'_{ \zeta}\|$. Thus the estimate on $\Theta'_\zeta$ in $\hat Q$ is $$c\eps^{1.5} \|(f^n)'_{ \zeta}\| + c\eps \|(f^n)'_{ \zeta}\|.$$

Now we use Cauchy estimates for $\Theta'_{\zeta}$ in the rectangle  $$\tilde Q = [0,1]\times [-1.5i\eps, 1.5i\eps] \subset \hat Q = [-\eps,\eps]\times[-2i\eps, 2i\eps]$$ to get $$\Theta'''_{\zeta zz}|_{\tilde Q} \le \|(f^n)'_{ \zeta}\|_{C(\Pi_{\eps/2})} \frac {c\eps^{1.5} + c\eps}{\eps^2} < c\eps^{0.5} \|(f^n)'_{ \zeta}\|_{C(\Pi_{\eps/2})}$$  for  $\eps>1$.

Since $\Psi = \Theta (z/L)$, we have $\Psi''_{zz} = L^{-2} \Theta''(z/L)$, thus
\begin{eqnarray*}
  \Psi'''_{zz\zeta}|_{\zeta=0} (z)&=& \Theta'''_{zz\zeta}|_{\zeta=0}(z/l) l^{-2} + \Theta'''(z/L) (z/L)'_\zeta L^{-2} - \Theta''(z/L) 2L'_\zeta/L^3 \\
  &=& \Theta'''_{zz\zeta}|_{\zeta=0}(z/l) l^{-2},\end{eqnarray*}
because $\Theta=\text{id}$ for $\zeta=0$.
So the above estimate implies item \eqref{it-deriv} of the Technical lemma.

\end{proof}

\noindent
\textbf{The case when $f$ is analytically conjugate to a rotation, item \eqref{it-conj}}

Suppose that  $f$ is conjugate to $R_\alpha$ via $\xi$, $|\xi-id|_{C_1(\Pi_{\eps/2})}<\theta$, and $\xi(0)=0$. We will show that $g(z) = f^{n}(Lz)/L$ is $c\theta$-close to the unit translation in $C^2(Q)$, where $c=c(\eps)$ does not depend on $\alpha$. Then the proof of Proposition \ref{prop-Psi-estim} applies to $f$ without any modifications.

Note that $g= f^n(Lz)/L$ is conjugate to $R_{\{n\alpha\}/ L}$ via $\xi(Lz)/L$. The latter conjugacy is $c(\eps) \theta$-close to identity in $C(\Pi_{\eps/(2l)})$. Indeed,
$$|L^{-1}\xi (Lz) - z| = |\frac{\xi(Lz) - \xi(0)}{L} -z | = |z \xi'(\eta) -z| <|z|\theta<\max (1, \eps) l\theta.$$
We conclude that $g$ is $c(\eps)\theta$-close to $R_{\{n\alpha\}/L}$ in $C(\Pi_{\eps/(2l)})$. Note that $\{n\alpha\}/L = \{n\alpha\}/(f^n(0)-0)$, and
$$
|f^n(0)-0| = |\xi R_{\{n\alpha\}}\xi^{-1}(0) - \xi(\xi^{-1} (0))| = |\xi'(\eta)|\, \{n\alpha\}.
$$
Thus the size of the translation is $c(\eps) \theta$-close to $1$. Finally, $g$ is $c(\eps)\theta$-close to the unit translation in $C(\Pi_{\eps/(2l)})$, thus is $c(\eps)\theta$-close to the unit translation in $C^2(Q)$.

Since in Proposition \ref{prop-Psi-estim} we only use  that $g$ is close to the unit translation in $C^2(Q)$, the same arguments as above show that for sufficiently small $\theta$, the map  $\Psi$ is well-defined for diffeomorphisms sufficiently close to $f = \xi R_\alpha \xi^{-1}$  and satisfies assertions of Proposition \ref{prop-Psi-estim}. The estimate $\|\Psi(Lz) - z\|_{C(R)} \le c(\eps) \theta$ is established in the same way as in item \eqref{it-lin-ref}.

\section{Renormalization as an analytic operator of Banach affine manifolds}

\subsection{Domain of definition of $\mathcal R$}

Recall that $P$ is the first-return map to the fundamental domain $R$ under the action of $f$.
\begin{lemma}
  \label{lem-domain1}
 For fixed $\eps$, for each $\alpha\notin K$, there exists a neighborhood $U_\alpha$ of $R_\alpha$ in $\mathcal D_\eps$ such that  for $f\in U_\alpha$, the map $\Psi P \Psi^{-1} $  is a well-defined holomorphic map in $\Pi_{1.5\eps}$.

There exists $\theta=\theta(\eps)$ such that if $f$ is analytically conjugate to the rotation,  $f = \xi  R_{\alpha} \xi^{-1}$, with
$$\xi\colon \Pi_{\eps/2}\to \Pi_{\eps},\qquad  \|\xi - id\|_{C^1(\Pi_{\eps/2})} < \theta,$$
then in some neighborhood of $f$ in  $\mathcal D_\eps$ the map $\Psi P \Psi^{-1} $  is a well-defined holomorphic map in $\Pi_{1.5\eps}$.
Note that we do not assume that $f\in U_\alpha$ here.%\fixme{rev Y comm 12}

If $f$ preserves the real axis, then $\Psi P \Psi^{-1} $ also preserves the real axis.
\end{lemma}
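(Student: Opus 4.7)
The plan is to verify two facts: (i) the preimage $\tilde\Psi^{-1}(\Pi_{1.5\eps})$ lies inside a slight enlargement of the quotient $\tilde R / f^n$, well inside $R/f^n$, and (ii) the first-return map $P$ under iterates of $f$ is well-defined and holomorphic on this preimage, with every intermediate orbit trapped in $\Pi_\eps$. Granted (i) and (ii), $\tilde\Psi P \tilde\Psi^{-1}$ is automatically a holomorphic map $\Pi_{1.5\eps} \to \bbC/\bbZ$, because $P(R) \subset R$ by the definition of a first-return map and $\tilde\Psi$ is biholomorphic on its domain; real-symmetry will then come for free.

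For (i), invoke Technical Lemma items \eqref{it-lin} and \eqref{it-conj}: $\Psi$ is uniformly close to $z \mapsto z/L$ on $R$, with $L = f^n(0)$ close to $l = \{n\alpha\}$. Hence $\tilde\Psi$ descends to a map on the annulus $R/f^n$ that is close to the quotient of $z\mapsto z/l$, whose image is near $\Pi_{2\eps}$. The preimage $\tilde\Psi^{-1}(\Pi_{1.5\eps})$ is therefore a small perturbation of $\tilde R/f^n$, with vertical extent at most $(1.5+o(1))\eps l$, safely inside $R$ whose vertical extent is $2\eps|L|\approx 2\eps l$.

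For (ii), first analyze the model $f = R_\alpha$ with $\alpha \in (\bbR/\bbZ) \setminus K$. Because $n = n(\alpha)$ is chosen so that $l < 0.01$, an elementary analysis of orbits of the rotation (the three-distance theorem) shows the first-return time to $R$ under $R_\alpha$ is finite and takes only finitely many values on $\tilde R$; by construction of $K$ these values are locally constant in $\alpha$ throughout $\mathcal T$. For the pure rotation, orbit segments preserve imaginary parts and so remain inside $\{|\Im w| < 1.5\eps l\} \subset \Pi_\eps$. For $f$ in a sufficiently small neighborhood of $R_\alpha$ in $\mathcal D_\eps$ the return combinatorics are structurally stable: iterates of $f$ differ from those of $R_\alpha$ by an arbitrarily small $C^0$ error, so the orbit of any point in a slight enlargement of $\tilde R$ first returns to $R$ at the same time and stays inside $\Pi_\eps$ throughout. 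Hence $P$ is piecewise a fixed iterate of $f$, holomorphic on each piece, and after descent to the annulus becomes a continuous, analytic, univalent map. The conjugate case $f = \xi R_\alpha \xi^{-1}$ reduces to this one by working in the $\xi$-coordinate: the $C^1$-bound $\|\xi - \mathrm{id}\|_{C^1(\Pi_{\eps/2})} < \theta$ ensures that $\xi^{-1}$ displaces $\tilde R$ by a controlled amount, so the same return combinatorics apply, and we do not need $f$ itself to be close to $R_\alpha$.

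Real-symmetry is immediate: when $f$ preserves $\bbR/\bbZ$, $L$ is real so $I$ is purely imaginary, making both $I$ and $f^n(I)$ symmetric under $z\mapsto\bar z$; thus $R$ and $\tilde R$ are real-symmetric, $P$ is a composition of iterates of the real-symmetric $f$, and $\Psi$ is real-symmetric by Technical Lemma \eqref{it-real}. The main obstacle is part (ii): establishing the structural stability of the return combinatorics and the uniform trapping of the finite orbit segment in $\Pi_\eps$. The bound $l<0.01$ built into the definition of $n(\alpha)$ is what leaves the necessary vertical room in the strip, and the exclusion of the discontinuity set $K$ is what guarantees local constancy of the combinatorial return data on $\mathcal T$.
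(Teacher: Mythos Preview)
Your proposal is correct and follows essentially the same approach as the paper: both arguments use Technical Lemma items \eqref{it-lin} and \eqref{it-conj} to show $\Psi$ is close to $z\mapsto z/l$, both verify that the first-return map is well-defined on the relevant substrip of $R$ by perturbing from the rotation (or from the conjugate-to-rotation case), and both obtain real-symmetry from item \eqref{it-real}. The only cosmetic difference is that you work backwards from $\tilde\Psi^{-1}(\Pi_{1.5\eps})$ on the quotient $R/f^n$, whereas the paper works forward on $\Pi_{1.9l\eps}\cap R$ and then extends to $\Pi_{1.5\eps}$ using that $\Psi$ conjugates $f^n$ to the unit translation; these are the same mechanism expressed in quotient versus periodicity language.
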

\begin{proof}
If $f$ is sufficiently close to $R_\alpha$, then the first-return map $P$ to $R$ is well-defined and univalent in $\Pi_{1.9l\eps}\cap R$.
Proposition \ref{prop-Psi-estim}  shows that if $f$ is close to $R_{\alpha}$, then   $\Psi$ is close to the linear map  $z\mapsto z/l$ in $R$. So $\Psi P \Psi^{-1}$ is defined in the rectangle $\Psi (\Pi_{1.9 l\eps}\cap R)$ of width $1$ and height approximately $1.9 \eps$. It extends to a  well-defined holomorphic map in the strip $\Pi_{1.5\eps}$ because the map $\Psi$ conjugates $f^n$ to the unit translation.

If $f=\xi R_\alpha \xi^{-1}$ and $\xi$ is $\theta$-close to identity for sufficiently small $\theta$, then the first-return map $P$ is automatically well-defined in $R$ except $0.1 \eps l$-neighborhoods of its borders. Same holds for any map that is sufficiently close to $f$.  Further, for any map close to $f$, $\Psi$ is well-defined in $R$ due to item \eqref{it-conj} of the Technical lemma, and $c\theta/L$-close to the linear map $z\mapsto z/l$. So for all maps in a neighborhood of $f$ in $\mathcal D_\eps$,  $\Psi P \Psi^{-1} $  defines a holomorphic map in a rectangle $\Psi (\Pi_{1.9 l\eps}\cap R)$ of width $1$ and height approximately $1.9 \eps$. As above, it extends to  a  well-defined holomorphic map in the strip $\Pi_{1.5\eps}$.

If $f$ preserves the real axis, then $\Psi$ preserves the real axis due to item \eqref{it-real} of Proposition \ref{prop-Psi-estim}, thus  $\Psi P \Psi^{-1} $ preserves the real axis.

\end{proof}
With $U_\alpha$ as above, let us define
$$\mathcal U := \cup U_\alpha,\;\alpha\in\mathcal T\subset \mathcal D_\eps.$$

Recall that we define the renormalization operator in the following way:
$$\mathcal R f := \widetilde \Psi P \widetilde \Psi^{-1}|_{\Pi_{1.5\eps}},$$
where the map  $\widetilde \Psi\colon R/f^n \mapsto \bbC/\bbZ$ is induced by the map $\Psi\colon R\to \bbC$ constructed in Proposition \ref{prop-Psi-estim}, and $P$ is the first-return map to the  domain $R$ under the action of $f$.

\begin{lemma}
\label{lem-domain2}
  In the previous notation, the following holds:
\begin{enumerate}
\item $\mathcal R$ is a real-symmetric complex-analytic operator $\mathcal R \colon \cU\to \mathcal D_{1.5\eps};$

\item There exists $\theta=\theta(\eps)$ such that if $f\in \mathcal D_\eps$ is analytically conjugate to the rotation,  $f = \xi R_{\alpha} \xi^{-1}$, with
$$\xi\colon \Pi_{\eps/2}\to \Pi_{\eps},\qquad  \|\xi - id\|_{C^1(\Pi_{\eps/2})} < \theta,$$
then for some neighborhood $\mathcal U_f$ of $f$, $\mathcal R\colon \mathcal U_f \to \mathcal D_{1.5\eps}$ is  a real-symmetric complex-analytic operator.

  \item For $\alpha\in\cT$, the differential $$(\mathcal R)'|_{R_\alpha} \colon T_{R_{\alpha}}\mathcal D_{\eps} \mapsto T_{\mathcal R(R_{\alpha})}\mathcal D_{\eps}$$
  of the restriction $\mathcal R \colon \mathcal U \to \mathcal D_\eps$ to $\mathcal D_\eps$  is a compact linear operator.
\end{enumerate}
\end{lemma}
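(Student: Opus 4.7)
The plan is to package items (1) and (2) together by combining Lemma~\ref{lem-domain1} with the real-symmetry and analytic-dependence clauses of the Technical Lemma, and to deduce item (3) from the observation that $\cR$ actually factors through the compact inclusion $\cD_{1.5\eps}\hookrightarrow\cD_\eps$.

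For items (1) and (2), well-definedness of $\cR f$ as an element of $\cD_{1.5\eps}$ has already been established in Lemma~\ref{lem-domain1}, in both the neighborhood-of-$\cT$ scenario and the neighborhood-of-$\xi R_\alpha\xi^{-1}$ scenario. Real-symmetry is immediate: when $f\in\cD_\eps^\RR$, the segment $I$ and the quadrilateral $R$ are symmetric under complex conjugation, the first-return map $P$ commutes with conjugation, and Technical Lemma item~\eqref{it-real} ensures that $\Psi$, and hence $\tilde\Psi$, is real-symmetric; therefore $\tilde\Psi P\tilde\Psi^{-1}$ preserves $\bbR/\bbZ$. For complex-analyticity, the point is that on a sufficiently small neighborhood of any $R_\alpha\in\cT$ (respectively of any $f=\xi R_\alpha\xi^{-1}$ in case (2)), the combinatorics of the first return to $R$ is locally stable, so $P$ can be written as a locally constant iterate $f^{N(f)}$ (at worst as finitely many such branches on locally constant subregions of $R$). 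Thus $P$ depends analytically on $f$; Technical Lemma item~\eqref{it-an-depend} gives analytic dependence of $\Psi$, hence of $\Psi^{-1}$ and $\tilde\Psi$; and composition and restriction to $\Pi_{1.5\eps}$ preserve analyticity. Therefore $\cR\colon\cU\to\cD_{1.5\eps}$ is a real-symmetric complex-analytic operator, and the same argument applied locally around a $\xi R_\alpha\xi^{-1}$ yields (2).

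For item~(3), item~(1) supplies a complex-analytic map $\cR\colon\cU\to\cD_{1.5\eps}$, whose Fréchet differential at $R_\alpha$ is a bounded linear operator
$(\cR)'|_{R_\alpha}\colon T_{R_\alpha}\cD_\eps\to T_{\cR(R_\alpha)}\cD_{1.5\eps}$.
The operator whose compactness we must verify is obtained by post-composing this bounded linear map with the natural inclusion $\cD_{1.5\eps}\hookrightarrow\cD_\eps$, i.e.\ the map sending $g\in\cD_{1.5\eps}$ to its restriction $g|_{\Pi_\eps}\in\cD_\eps$. This inclusion is compact by Montel's theorem: a norm-bounded subset of $\cD_{1.5\eps}$ consists of uniformly bounded holomorphic maps on $\Pi_{1.5\eps}$, whose restrictions to the relatively compact subdomain $\Pi_\eps$ form a normal family, hence a precompact subset of $\cD_\eps$ in the sup norm. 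The composition of a bounded operator with a compact one is compact, yielding the claim.

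The only genuinely nontrivial step is the assertion of locally constant combinatorics of the first-return map $P$ on $\cU$; once this stability is granted, every remaining step reduces to functorial bookkeeping of the properties already established in the Technical Lemma and Lemma~\ref{lem-domain1}, together with Montel's theorem for the compactness of the inclusion.
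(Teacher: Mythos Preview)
Your argument is correct and follows essentially the same route as the paper's proof: items (1) and (2) are deduced from Lemma~\ref{lem-domain1} together with the analytic-dependence and real-symmetry clauses of the Technical Lemma, and item (3) from factoring the differential through the compact restriction $\cD_{1.5\eps}\hookrightarrow\cD_\eps$. Your version is in fact slightly more explicit than the paper's, which does not spell out the locally constant combinatorics of $P$ or invoke Montel's theorem by name for the compactness of the inclusion.
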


\begin{proof}
  The first two statements follow from Lemma~\ref{lem-domain1} and item \eqref{it-an-depend} of Proposition \ref{prop-Psi-estim}. The third statement follows since the differential $(\mathcal R)'|_{R_\alpha} \colon T_{R_{\alpha}}\mathcal D_{\eps} \mapsto T_{\mathcal R (R_{\alpha})}\mathcal D_{1.5\eps}$ is a bounded linear operator, and the restriction of a vector field to a smaller strip $T_{\mathcal R(R_{\alpha})}\mathcal D_{1.5\eps} \mapsto T_{\mathcal R(R_{\alpha})}\mathcal D_{\eps} $ is a compact operator.
\end{proof}

\section{Infinitesimal expansion/contraction of the differential $\mathcal R'$}

\subsection{Unstable direction of $\mathcal R'$}

The tangent space to the Banach affine manifold $\mathcal D_{\eps}$ at $R_{\alpha}$ is the space of analytic vector fields in $\Pi_{\eps}$.
It is the sum of a (complex) one-dimensional subspace $V_1$ generated by a unit vector field $d/dz$
and a (complex) codimension-1 closed linear subspace $V_0$ of vector fields with zero average, $\int_{\bbR/\bbZ}v \, dz=0$. Note that $\mathcal R$ takes a rigid rotation to a rigid rotation (but with a different rotation number), thus the real one-dimensional space formed by rotations is invariant under $\mathcal R$.

This implies that $V_1$ is invariant under $\mathcal R'|_{R_\alpha}$, $\alpha\in \bbR$.
\begin{lemma}
\label{lem-expand}
For all real $\alpha$,  $\mathcal R'|_{R_\alpha}$ expands on $V_1$.
\end{lemma}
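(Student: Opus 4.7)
The approach is to exploit that $\mathcal R$ takes rotations to rotations (noted in the remark immediately after the Definition of $\mathcal R$), so $\mathcal R'|_{R_\alpha}$ restricted to $V_1$ maps the one-dimensional curve of rotations through $R_\alpha$ to the curve of rotations through $\mathcal R R_\alpha$. Writing $\mathcal R R_{\alpha+\tau} = R_{\beta(\tau)}$ and identifying $d/dz \in V_1$ with $\partial/\partial\alpha$, the expansion claim reduces to showing $|\beta'(0)| > 1$.

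I compute $\beta(\tau)$ explicitly from the construction in Section \ref{sec-def}. Since $R_\alpha \in \mathcal T$, the function $n(\cdot)$ is locally constant near $\alpha$; write $n = n(\alpha) = q_m$, $p = p_m$, and $l := q_m \alpha - p_m = \{n\alpha\}$, with $m$ taken even so that $l > 0$ (the odd case is symmetric). For $f_\tau = R_{\alpha + \tau}$, one has $L(\tau) = f_\tau^n(0) = n(\alpha+\tau) - p = l + n\tau$, and by item (3) of the Technical Lemma applied to a rotation, $\Psi_\tau$ is exactly the linear map $z \mapsto z/L(\tau)$.

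Next I analyze the first-return map $P_\tau$. On the fundamental domain $R_\tau$ it is piecewise (different points of $R_\tau$ return after different numbers of iterations of $f_\tau$), but the various pieces are translations differing from one another by iterates of $f_\tau^n$, i.e.\ by integer multiples of $L(\tau)$. Hence on the quotient cylinder $R_\tau/f_\tau^n$, $P_\tau$ descends to a single rigid rotation of a circle of circumference $L(\tau)$. A convenient representative of this shift comes from the branch $k = q_{m+1}$: for even $m$, the value $r(\tau) = q_{m+1}(\alpha+\tau) - p_{m+1}$ lies in $(-L(\tau), 0)$ for small $\tau$, so $\beta(\tau) = 1 + r(\tau)/L(\tau)$. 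Differentiating at $\tau = 0$ and invoking the classical identity $q_m p_{m+1} - q_{m+1} p_m = (-1)^m$ gives
\[
\beta'(0) = \frac{q_{m+1} L - r\, q_m}{L^2}\bigg|_{\tau=0} = \frac{q_{m+1}(q_m\alpha - p_m) - (q_{m+1}\alpha - p_{m+1}) q_m}{l^2} = \frac{(-1)^m}{l^2}.
\]
Since $l = \{n\alpha\} < 0.01$ by the defining property of $n(\alpha)$, we obtain $|\beta'(0)| = 1/l^2 > 10^4 \gg 1$.

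The main step requiring care is the third one: verifying that the a priori piecewise first-return on $R_\tau$ does descend to a single well-defined rotation on the quotient, and identifying a convenient representative shift independent of the branch. Once this is in place, the derivative computation is a direct consequence of the standard determinant identity for successive convergents, and the factor $1/l^2$ is just the classical derivative of the Gauss map expressed in the present coordinates.
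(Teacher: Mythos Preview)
Your proof is correct and follows essentially the same approach as the paper: compute $\beta$ explicitly from the first-return time $q_m+q_{m+1}$ (equivalently, your branch $k=q_{m+1}$ on the quotient), differentiate using the identity $q_m p_{m+1}-q_{m+1}p_m=(-1)^m$, and conclude $|\beta'(0)|=1/l^2>10^4$. The only cosmetic discrepancy is your remark ``with $m$ taken even so that $l>0$ (the odd case is symmetric)'': in the paper $n(\alpha)=q_m$ is \emph{defined} so that $0<q_m\alpha-p_m<0.01$, so $l>0$ automatically and there is no separate odd case to handle.
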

\begin{proof}
Let  $\mathcal R R_\alpha=R_\beta$. Let us compute $\beta$.
For $f=R_\alpha$, $R$ intersects the real axis on $[0, f^n(0)] = [0,\{n\alpha\}]$ where $n=q_m$ for some $m$. The next return of the orbit of zero under $R_\alpha$ to this interval is $\{(q_m+q_{m+1})\alpha\} $. Thus $P$ takes zero to $(q_m+q_{m+1})\alpha - (p_m+p_{m+1}) $, and $\mathcal R R_\alpha$ takes zero to
 $$\beta=\frac{(q_m+q_{m+1})\alpha - (p_m+p_{m+1})}{q_m\alpha-p_m}  = \frac{q_{m+1}\alpha-p_{m+1}}{q_m\alpha-p_m} \mod 1.$$
 The derivative of the latter expression with respect to $\alpha$ is  $$\frac{q_{m+1}(q_m\alpha-p_m)}{(q_m\alpha-p_m)^2} - q_m\frac{(q_{m+1}\alpha-p_{m+1})}{(q_m\alpha-p_m)^2} = \frac{-q_{m+1}p_m +q_mp_{m+1}}{(q_m\alpha-p_m)^2}  = \frac{\pm 1}{ (q_m\alpha-p_m)^2}.$$
 Since the denominator is $l^2 \le 10^{-4}$, the result follows.
\end{proof}

\subsection{Stable subspace for $\mathcal R'$}
The key lemma is the following.
\begin{lemma}
\label{lem-curve}
For any $\alpha$ that satisfies the Brjuno condition, any vector field $v\in V_0\subset T\mathcal D_\eps$ is tangent to a  curve $\zeta\mapsto f_\zeta, f_0=R_\alpha$, $\zeta\in \bbC$, such that all $f_\zeta$ are analytically conjugate to $R_\alpha$ in $\Pi_{\eps/2}$.
\end{lemma}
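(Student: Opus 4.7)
The plan is to construct $f_\zeta$ as a one-parameter family of conjugates $f_\zeta = h_\zeta \circ R_\alpha \circ h_\zeta^{-1}$, where $h_\zeta = \mathrm{id} + \zeta u$ and the vector field $u$ solves the cohomological equation $u \circ R_\alpha - u = v$. The infinitesimal computation
$$\left.\frac{d}{d\zeta}\right|_{\zeta=0} (h_\zeta \circ R_\alpha \circ h_\zeta^{-1})(z) = u(R_\alpha(z)) - u(z)$$
shows that this is the natural construction: solving the cohomological equation for $u$ exactly realizes $v$ as the tangent to a curve of conjugates of $R_\alpha$, and the Brjuno hypothesis on $\alpha$ is precisely what makes the underlying small-divisor problem tractable in a sub-strip of $\Pi_\eps$.

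\emph{Step 1 (Fourier solution).} Expand $v(z) = \sum_{k\neq 0} \hat v_k e^{2\pi i k z}$; the zero-average condition $v \in V_0$ is $\hat v_0 = 0$, and analyticity of $v$ on $\Pi_\eps$ gives exponential decay $|\hat v_k| \lesssim e^{-2\pi|k|\eps}$. Define
$$u(z) := \sum_{k\neq 0} \frac{\hat v_k}{e^{2\pi i k\alpha}-1}\, e^{2\pi i k z},$$
so $u$ formally satisfies $u(z+\alpha)-u(z) = v(z)$.

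\emph{Step 2 (Brjuno estimate).} The classical small-divisor estimates behind Yoccoz's Theorem~\ref{th-Yoc2} imply that, for Brjuno $\alpha$, the series for $u$ converges uniformly on compacts and defines a bounded holomorphic function on the narrower strip $\Pi_{\eps'}$ with $\eps' \geq \eps - \frac{1}{2\pi}\Phi(\alpha) - C_0$ for a universal constant $C_0$. Under the hypothesis $\Phi(\alpha)<C$ and the Main Theorem's freedom to take $\eps$ sufficiently large, we arrange $\eps' > \eps/2$. Then for $|\zeta|$ sufficiently small the map $h_\zeta = \mathrm{id}+\zeta u$ is a biholomorphism from $\Pi_{\eps/2}$ onto a neighborhood of $\bbR/\bbZ$, and $f_\zeta := h_\zeta \circ R_\alpha \circ h_\zeta^{-1}$ is a well-defined family, analytic in $\zeta$, conjugate to $R_\alpha$ in $\Pi_{\eps/2}$ by construction, with tangent $v$ at $\zeta=0$ by the computation above.

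The main obstacle is the convergence of $u$ in a strip wide enough to contain $\Pi_{\eps/2}$; this is precisely the classical small-divisor problem, and the Brjuno condition on $\alpha$ is exactly the sharp hypothesis ensuring a bounded solution with controlled loss of domain. A subsidiary bookkeeping point is that each $f_\zeta$ must lie in $\cD_\eps$ rather than only in $\cD_{\eps/2}$: this is handled by carrying out the construction above with the strip parameter inflated slightly beyond the nominal $\eps$ at the outset (permissible since the Main Theorem works for $\eps$ sufficiently large), so that even after the Brjuno loss in Step 2 the resulting $f_\zeta$ is analytic and bounded on all of $\Pi_\eps$.
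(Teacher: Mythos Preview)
Your approach is the same as the paper's: reduce to the linear cohomological equation $u(z+\alpha)-u(z)=v(z)$, solve it by Fourier series, and set $f_\zeta=(\mathrm{id}+\zeta u)\circ R_\alpha\circ(\mathrm{id}+\zeta u)^{-1}$. The paper records exactly this reduction and then proves the needed analyticity of $u$ as a separate lemma (Lemma~\ref{lem-tangent}).

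The one substantive discrepancy is in your Step~2. You invoke ``the classical small-divisor estimates behind Yoccoz's Theorem~\ref{th-Yoc2}'' and quote a domain loss of $\tfrac{1}{2\pi}\Phi(\alpha)+C_0$. That is the loss for the \emph{nonlinear} conjugacy problem; it is both the wrong citation and a gross overestimate for the \emph{linear} equation at hand. The paper instead carries out the elementary direct estimate: since $|e^{2\pi i k\alpha}-1|\asymp \|k\alpha\|\ge \|q_n\alpha\|\asymp 1/q_{n+1}$ for $q_n\le|k|<q_{n+1}$, one only needs $\tfrac{\log q_{n+1}}{q_n}<2\pi\delta$ for large $n$ to get $u$ analytic on $\Pi_{\eps-\delta}$. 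For Brjuno $\alpha$ the terms $\tfrac{\log q_{n+1}}{q_n}$ tend to zero, so this holds for \emph{every} $\delta>0$; the paper takes $\delta=0.1\eps$ and obtains $u\in\cD_{0.9\eps}$, valid for all $\eps>0$. Your version, by contrast, forces $\eps>\tfrac{1}{\pi}\Phi(\alpha)+2C_0$, which the lemma as stated does not assume. Replacing your Step~2 with this direct computation fixes the argument and removes the dependence on large $\eps$.
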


After we prove this lemma, we are going to show that $\mathcal R$ contracts on  such families $f_\zeta$.

Note that the curve
$$
f_\zeta = (\text{id} + \zeta h) R_\alpha (\text{id}+\zeta h)^{-1}
$$
is tangent to $v$ at $\zeta=0$ if and only if the vector field $h$ satisfies
\begin{equation}
    \label{hom-eq}
    h(z+\alpha)-h(z)=v(z).
\end{equation}
So Lemma \ref{lem-curve} is implied by the following lemma.

\begin{lemma}
\label{lem-tangent}
Suppose that $\alpha, \eps$ satisfy
\begin{equation}
  \label{condition-qn}
  \frac{\log q_{k+1}}{q_{k}}  <2\pi 0.05\eps
\end{equation}
for sufficiently large $k$.
For each analytic vector field $v$ in $\Pi_{\eps}$ with zero average, there exists a unique analytic  vector field $h$ in $\Pi_{0.9\eps}$ such that $h(0)=0$ and
(\ref{hom-eq}) holds for $h$, $v$.
Moreover,  $$\|h\|_{C(\Pi_{0.9\eps})}<c(\alpha,\eps)\|v\|_{C(\Pi_\eps)}\text{ for some constant }c(\alpha, \eps).$$

\end{lemma}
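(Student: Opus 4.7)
The natural approach is via Fourier series on $\bbR/\bbZ$. Since $v$ is an analytic $1$-periodic function on $\Pi_\eps$ with zero average, expand $v(z)=\sum_{n\neq 0}\hat v_n e^{2\pi i n z}$; shifting the integration contour inside $\Pi_\eps$ gives the standard decay $|\hat v_n|\le\|v\|_{C(\Pi_\eps)} e^{-2\pi|n|\eps}$. Seeking $h(z)=\hat h_0+\sum_{n\neq 0}\hat h_n e^{2\pi i n z}$, equation \eqref{hom-eq} forces
$$\hat h_n=\frac{\hat v_n}{e^{2\pi i n\alpha}-1},\qquad n\neq 0,$$
and the constant $\hat h_0$ is then uniquely pinned down by the normalization $h(0)=0$. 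Existence and uniqueness thus reduce to showing that this formal series converges absolutely on $\Pi_{0.9\eps}$ with the claimed bound.

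The main estimate couples the trivial $|e^{2\pi i n\alpha}-1|\ge 4\,\mathrm{dist}(n\alpha,\bbZ)$ with the classical best-approximation bound $\mathrm{dist}(n\alpha,\bbZ)\ge\mathrm{dist}(q_k\alpha,\bbZ)\ge 1/(2q_{k+1})$ valid for all $0<|n|<q_{k+1}$. Grouping by blocks $q_k\le|n|<q_{k+1}$, for such $n$ one has
$$|\hat h_n|\,e^{2\pi|n|\cdot 0.9\eps}\le\tfrac12\,\|v\|_{C(\Pi_\eps)}\,q_{k+1}\,e^{-0.2\pi\eps|n|}.$$
Summing the resulting geometric tail contributes at most $C(\eps)\,\|v\|\,q_{k+1}\,e^{-0.2\pi\eps q_k}$ from the $k$-th block. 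The hypothesis \eqref{condition-qn}, equivalently $q_{k+1}\le e^{0.1\pi\eps q_k}$ for large $k$, squeezes this below $C(\eps)\,\|v\|\,e^{-0.1\pi\eps q_k}$; since $q_k$ grows at least as fast as the Fibonacci sequence, the sum over $k$ converges and yields $\|h\|_{C(\Pi_{0.9\eps})}\le c(\alpha,\eps)\|v\|_{C(\Pi_\eps)}$. Finitely many small-$k$ blocks (where \eqref{condition-qn} might fail) are absorbed into $c(\alpha,\eps)$.

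Uniqueness is immediate: if $h_1,h_2$ both satisfy \eqref{hom-eq} with $h_i(0)=0$, then $w=h_1-h_2$ is a $1$-periodic analytic function on $\Pi_{0.9\eps}$ invariant under $R_\alpha$; density of orbits of an irrational rotation forces $w$ to be constant on $\bbR/\bbZ$, hence everywhere by analytic continuation, so $w\equiv 0$. The only genuinely delicate step is the arithmetic bookkeeping in the middle paragraph --- matching the loss in the strip width ($\eps\to 0.9\eps$), which costs $e^{+2\pi\cdot 0.1\eps|n|}$, against the small-divisor loss $q_{k+1}\le e^{2\pi\cdot 0.05\eps\,q_k}$ permitted by \eqref{condition-qn}. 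The factor $2\pi\cdot 0.05\eps$ is chosen precisely so that these two competing exponentials leave a definite negative exponent in $q_k$, which is the standard small-divisor threshold calculation.
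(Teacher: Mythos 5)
Your proposal is correct and takes essentially the same route as the paper's proof: Fourier expansion of $v$, the explicit coefficients $\hat h_n=\hat v_n/(e^{2\pi i n\alpha}-1)$ with the constant term fixed by $h(0)=0$, and a small-divisor lower bound obtained from the best-approximation inequality $\dist(n\alpha,\bbZ)\ge \tfrac{1}{2q_{k+1}}$ for $|n|<q_{k+1}$ combined with hypothesis \eqref{condition-qn}. Your block-by-block bookkeeping over $q_k\le|n|<q_{k+1}$ is just a slightly more explicit form of the paper's estimate $|e^{2\pi i k\alpha}-1|>e^{-2\pi|k|\cdot 0.09\eps}$ for large $|k|$, so nothing further is needed.
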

In particular, for any $\eps$, the inequality \eqref{condition-qn} will hold for any  $\alpha\in \mathcal B$  for sufficiently large $k$, since the Brjuno function $\Phi_0(\alpha)$ is finite for $\alpha\in \mathcal B$.

\begin{proof}
We will achieve $h(0)=0$ by shifting the argument.

Note that if a function $f$ is analytic in the strip of width $\eps$, then its Fourier coefficients decrease exponentially fast:
$$|a_n| \le  \frac{\|f\|_{C(\Pi_\eps)}}{e^{2\pi \eps |n|}}.$$ The converse is also true: if $|a_k|<ce^{-2\pi k\eps}$ for some $c$, then the function is analytic in any strip of width smaller than $\eps$.

Suppose that $v$ has Fourier coefficients $a_k$; note that $a_0=0$ because $v$ has zero average. Then  $h$ satisfies (\ref{hom-eq}) if and only if it has Fourier coefficients $b_k  = a_k / (e^{2\pi i k\alpha}-1)$ for $k\neq 0$. Denominators are non-zero because $\alpha$ is irrational and $k\neq 0$.

To prove analyticity of $h$ in the strip of width $0.9\eps$, we will prove that
\begin{equation}
\label{eq-estim}
|e^{2\pi i k \alpha}-1|>e^{-2\pi |k| 0.09\eps}
\end{equation}
for sufficiently large  $|k|$. Clearly, this implies the statement.

Note that $$\frac{1}{\pi}<\frac{|e^{2\pi i k \alpha}-1|}{\dist (k\alpha, 1)}<1.$$  So it is sufficient to estimate  $\dist( k \alpha, 1)$. We will only consider $k>0$; the case $k<0$ is symmetric.

Recall that the approximants $p_n/q_n$ correspond to closest returns to zero for $n\alpha\mod 1$:
$$\dist (k\alpha, 1)>\dist (q_n\alpha, 1)\text{ for }q_n +1 \le k< q_{n+1}.$$
% \fixme{check new ineq on k}
Thus it is sufficient to prove that
$$\dist( q_n \alpha, 1)>e^{-2\pi q_{n} 0.08\eps}.$$

The left-hand side is between $\frac{1}{q_{n+1}}$ and $\frac{1}{2q_{n+1}}$. Thus it suffices to prove $2q_{n+1} < e^{2\pi q_n 0.08\eps}$ for sufficiently large $n$.
But this is implied by the inequality $$\frac{\ln q_{n+1}}{q_n}<2\pi 0.05\eps$$ for sufficiently large $n$.
We have proved \eqref{eq-estim}, thus analyticity of $h$ in $\Pi_{0.9\eps}$.

Now let us estimate $\|h\|_{\Pi_{0.9\eps}}$. Recall that $$|a_n| \le  \|v\|_{C(\Pi_\eps)}e^{-2\pi \eps |n|}$$ and $h(z) = \sum_k \frac{a_k}{e^{2\pi i k\alpha} -1} e^{2\pi i kz}$. Thus for $|\Im z| < 0.9 \eps$,
$$
 \|h\|_{\Pi_{0.9\eps}}<\|v\|_{\Pi_\eps} \sum_k  \frac{e^{-2\pi 0.1\eps |k|}}{|e^{2\pi i k\alpha} -1|}   =
%  =   \sum_k \sum_{n\neq 0, n\in [q_{k}, q_{k+1}]} e^{-2\pi \eps |n|} q_{k+1} e^{2\pi 0.9\eps |n|}  =\\= \|v\|_{\Pi_\eps}  \sum_k \sum_{n\neq 0, n\in [q_{k}, q_{k+1}]}q_{k+1} e^{-2\pi0.1\eps |n|} <
c(\alpha, \eps) \|v\|_{\Pi_\eps}.
$$
Here $c(\alpha, \eps)$ is finite due to \eqref{eq-estim}.
% \equiv \sum_k\sum_{n\neq 0, n\in [q_{k}, q_{k+1}]} q_{k+1} e^{-2\pi0.1\eps |n|} $$ is finite due to (\ref{condition-qn}) for sufficiently large $k$.

\end{proof}

\subsection{Contraction on $V_0$ for large $\eps$}

Note that $\sup_{\Pi_\eps} |v''|$ is a norm on $V_0$; we denote it by $\|v\|_2$. Note also that this norm is equivalent to $\|v\|$; indeed, one can easily prove
$$\|v\| \le (\max (1, \eps))^2\|v''\|\text{ and }\|v''\| \le \frac {1}{2\pi (\min (1, \eps))^2}\|v\|.$$

 For a real $\alpha$, consider the linear operator $$\mathcal L_\alpha \colon h \to h(z+\alpha)-h(z)$$ on the space $T\mathcal D_{0.9\eps}$. Define the linear operator $\mathcal M_\alpha \colon T\mathcal D_{\eps} \to T\mathcal D_{0.9\eps}$ that takes $v$ to $h$ provided by Lemma \ref{lem-tangent}. This operator is ``inverse'' to $\mathcal L_\alpha$: $\mathcal L_\alpha \mathcal M_\alpha v$ is a restriction of $v$ to $\Pi_{0.9\eps}$.
%  If $\eps$ is sufficiently large, $\mathcal M_\alpha$ contracts, and
Clearly, $\|\mathcal L_\alpha\|\le 2$; in the  assumptions of the previous lemma, $$\|\mathcal M_\alpha\|_{C(\Pi_{0.9\eps})}<c(\alpha, \eps)\|v\|_{C(\Pi_{\eps})}.$$

Roughly speaking, we will prove that the operator $\mathcal M_\beta \mathcal R \mathcal L_\alpha$ contracts, where $\beta$ is such that $\mathcal R R_\alpha = R_\beta$.   However we will work with derivatives rather than operators themselves.

% Assume that $\mathcal R'v = \lambda v$ for $v\in V_2$. We will prove that for sufficiently large $\eps$, we have $\lambda<0.5$.

The following lemma gives an explicit form of $(\mathcal M_\beta \mathcal R \mathcal L_\alpha)'$.
\begin{lemma}
Let $\alpha$ satisfy assumptions of Lemma \ref{lem-tangent}, let $v\in V_0$, $h := \mathcal M_\alpha v$, $\xi_\zeta := id + \zeta h$, and $f_\zeta := \xi_\zeta R_\alpha \xi_\zeta^{-1}$. Put $R_\beta=\mathcal R (R_\alpha)$. Then
 $$\mathcal R' v = \mathcal L_\beta \mathcal Q \mathcal M_{\alpha} v,$$ where
$$
\mathcal Q h:= \Psi'_{\zeta}|_{\zeta=0} (lz) + h (lz)/l,
$$
and $\Psi=\Psi_\zeta$ corresponds to  $f_\zeta$.
\end{lemma}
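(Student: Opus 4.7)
The plan is to apply the chain rule to $\mathcal R f_\zeta = \Psi_\zeta \circ P_\zeta \circ \Psi_\zeta^{-1}$ at $\zeta = 0$ and reorganize the result into $\mathcal L_\beta \mathcal Q h$.

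First I would identify $P_\zeta$ in a workable form. Since $f_\zeta = \xi_\zeta R_\alpha \xi_\zeta^{-1}$, every iterate satisfies $f_\zeta^k = \xi_\zeta R_{k\alpha}\xi_\zeta^{-1}$. The proof of Lemma \ref{lem-expand} shows that on the sub-region of $R$ where $P_0 = f_0^{q_m+q_{m+1}}$ the translation amount is $s := (q_m+q_{m+1})\alpha - (p_m+p_{m+1}) = l\beta$. Hence on that piece $P_\zeta = \xi_\zeta R_s \xi_\zeta^{-1}$, and by analyticity of $\mathcal R$ (Lemma \ref{lem-domain2}) the formula I derive there determines $\mathcal R'v$ everywhere.

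Next I compute three infinitesimal quantities at $\zeta = 0$. From $\xi_\zeta = \mathrm{id} + \zeta h$ and $\xi_\zeta \circ \xi_\zeta^{-1} = \mathrm{id}$, one has $(d/d\zeta)|_{\zeta=0}\xi_\zeta = h$ and $(d/d\zeta)|_{\zeta=0}\xi_\zeta^{-1} = -h$. Chain rule on $\xi_\zeta R_s \xi_\zeta^{-1}$ then yields $\dot P_0(z) = h(z+s) - h(z)$. Differentiating $\Psi_\zeta \circ \Psi_\zeta^{-1} = \mathrm{id}$ at $\zeta = 0$, together with $\Psi_0(z) = z/l$, produces $(d/d\zeta)|_{\zeta=0}\Psi_\zeta^{-1}(w) = -l\,\dot\Psi_0(lw)$. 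Assembling the chain rule for $\mathcal R f_\zeta(w) = \Psi_\zeta(P_\zeta(\Psi_\zeta^{-1}(w)))$ at $\zeta=0$, using $\Psi_0'=1/l$, $P_0(z)=z+s$, $P_0'=1$, gives
\begin{equation*}
\mathcal R' v(w) = \dot\Psi_0(lw+s) - \dot\Psi_0(lw) + \tfrac{1}{l}\bigl[h(lw+s) - h(lw)\bigr].
\end{equation*}
Substituting $s = l\beta$ regroups this as $\bigl[\dot\Psi_0(l(w+\beta)) + h(l(w+\beta))/l\bigr] - \bigl[\dot\Psi_0(lw) + h(lw)/l\bigr] = \mathcal Q h(w+\beta) - \mathcal Q h(w) = \mathcal L_\beta \mathcal Q \mathcal M_\alpha v(w)$, which is the claim.

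The main obstacle is justifying the local identification $P_\zeta = \xi_\zeta R_s \xi_\zeta^{-1}$ together with the passage from one piece to the global vector field: $P_\zeta$ is not a single iterate of $f_\zeta$ on $R_\zeta$, since already for $R_\alpha$ one sub-region of $R$ returns at time $q_{m+1}$ and another at $q_m + q_{m+1}$, the two pieces being stitched by one $f^n$-iterate. What rescues the calculation is that $\mathcal R f_\zeta \in \mathcal D_{1.5\eps}$ is a well-defined analytic element by Lemma \ref{lem-domain2}, so the tangent vector $\mathcal R' v$ computed on the convenient piece automatically extends (one can separately verify consistency with the other piece via the $\Psi_\zeta$-equivariance $\Psi_\zeta(f_\zeta^n(z)) = \Psi_\zeta(z)+1$, which forces $\mathcal Q h$ to be $1$-periodic). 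Beyond this bookkeeping, everything is a routine chain-rule calculation.
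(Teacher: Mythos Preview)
Your proof is correct but takes a more computational route than the paper's. The paper observes directly that since $f_\zeta$ is conjugate to $R_\alpha$ via $\xi_\zeta$, the renormalization $\mathcal R f_\zeta$ is conjugate to $R_\beta$ via $\hat\xi_\zeta := \Psi_\zeta \circ \xi_\zeta(l\,\cdot)$; differentiating the relation $\mathcal R f_\zeta = \hat\xi_\zeta R_\beta \hat\xi_\zeta^{-1}$ at $\zeta=0$ gives $\mathcal L_\beta\bigl[(\hat\xi)'_\zeta|_{\zeta=0}\bigr]$, and one line of chain rule shows $(\hat\xi)'_\zeta|_{\zeta=0} = \dot\Psi_0(lz) + h(lz)/l = \mathcal Q h$. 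This entirely avoids the piecewise structure of the first-return map $P_\zeta$ that you had to address and then patch by analyticity. Your direct chain-rule computation on $\Psi_\zeta P_\zeta \Psi_\zeta^{-1}$ reaches the same answer and is valid, but the paper's conjugacy viewpoint is shorter and also makes transparent why $\mathcal Q h$ is the infinitesimal conjugacy for the renormalized map, which is precisely how it is used in the subsequent contraction estimate.
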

\begin{proof}
Since $h = \mathcal M_\alpha v$, $f_\zeta$ is tangent to $v$, so  $$\mathcal R'v = (\mathcal R f_\zeta)'_{\zeta}|_{\zeta=0}.$$

Since $f_\zeta$ is analytically conjugate to $R_\alpha$ via $\xi$, $\mathcal R f_\zeta$ is conjugate to the rotation $R_\beta=\mathcal R R_\alpha$ via $\hat \xi = \Psi \xi (lz)$. Note that
$$(\hat \xi_\zeta)'_{\zeta}|_{\zeta=0} = \Psi'_{\zeta}|_{\zeta=0} (lz) + h (lz)/l = \mathcal Q h.$$
Finally, since $\mathcal R f_\zeta = \hat \xi R_\beta \hat \xi^{-1} $, we have
$$\mathcal R'v = (\mathcal Rf_\zeta)'_{\zeta}|_{\zeta=0} = \mathcal Qh(z+\beta)-\mathcal Qh(z) = \mathcal L_{\beta} \mathcal Q h  =  \mathcal L_\beta \mathcal Q \mathcal M_\alpha v.$$
\end{proof}

\begin{lemma}
\label{lem-Q}
If $\eps$ is larger than a universal constant, then
 $$\|(\mathcal Qh)''\|_{C(\Pi_{0.9 \eps})}\le  0.1 \|h''\|_{C(\Pi_{0.9\eps})}.$$
\end{lemma}
\begin{proof}
We have
$$
(\mathcal Qh)'' = l^2\Psi'''_{zz\zeta}|_{\zeta=0}(lz) + l h''(lz).
$$
The second summand is estimated by $l\|h''\|_{C(\Pi_{0.9\eps})}$. Further, for $\eps>1$,
$$\|\Psi'''_{zz\zeta}|_{\zeta=0} \|_{C(\tilde R)}\le c \eps^{-0.5}l^{-2}\|(f^n)'_{\zeta}|_{\zeta=0}\|_{C(\Pi_{\eps/2})}$$
due to the Technical lemma, see item \eqref{it-deriv}; note that $z\in \Pi_{0.9\eps}, \Re z \in [0,1]$ implies $lz \in \tilde R$. Now, at $\zeta=0$,
$$(f^n)'_{\zeta} = \xi'_\zeta(z+n\alpha)-\xi'_{\zeta}(z) =  h(z+n\alpha)-h(z) = \{n\alpha\} h'(\nu).$$
Since $h$ is 1-periodic,  integrals of $h'$ over circles $\bbR/\bbZ + i\theta$ equal zero; so $\|h'\|_{C(\Pi_{\eps/2})} \le \|h''\|_{C(\Pi_{\eps/2})}$. Finally,  $$\|\Psi'''_{zz\zeta}|_{\zeta=0}\|_{C(\tilde R)} \le c \eps^{-0.5}l^{-1}\|h''\|_{C(\Pi_{\eps/2})}.$$
So for sufficiently large $\eps$, the statement holds.
\end{proof}

\begin{theorem}
  \label{th:infcont}
%  Suppose that $\alpha$ satisfies $\log q_{n+1}/q_n \to 0$.
 For  any $\alpha\in \mathcal B$,  for $\eps$ is larger than a universal constant, the differential of the operator $\mathcal R\colon \mathcal D_\eps\to \mathcal D_\eps$ at $R_\alpha$ contracts on the space of vector fields with zero average: for all $N$, for $v\in V_0$,
 $$
\|(\mathcal R^N)'v \|< C (0.1)^N  \|v\|,
$$
where $C$  depends on $\eps, \alpha$ only.

\end{theorem}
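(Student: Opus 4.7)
The plan is to iterate the infinitesimal identity $\mathcal R' v = \mathcal L_\beta\,\mathcal Q\,\mathcal M_\alpha v$ established just above, and read off a clean product formula for $(\mathcal R^N)'v$ in which the contraction factor $0.1$ from Lemma~\ref{lem-Q} accumulates multiplicatively.

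Set $\alpha_0:=\alpha$ and define $\alpha_{j+1}$ by $\mathcal R R_{\alpha_j}=R_{\alpha_{j+1}}$; denote by $l_j$, $\Psi^{(j)}$, $\mathcal L_j:=\mathcal L_{\alpha_j}$, $\mathcal Q_j$ the associated objects. Put $h_0:=\mathcal M_\alpha v$ and consider the one-parameter family $f^{(0)}_\zeta:=(\mathrm{id}+\zeta h_0)R_{\alpha_0}(\mathrm{id}+\zeta h_0)^{-1}$, which is tangent to $v$ at $\zeta=0$ and analytically conjugate to $R_{\alpha_0}$ for each small $\zeta$. Its renormalization $f^{(j)}_\zeta:=\mathcal R^j f^{(0)}_\zeta$ is then conjugate to $R_{\alpha_j}$ via $\hat\xi^{(j)}_\zeta(z):=\Psi^{(j-1)}_\zeta\bigl(\hat\xi^{(j-1)}_\zeta(l_{j-1}z)\bigr)$. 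Differentiating in $\zeta$ at $\zeta=0$, using $\hat\xi^{(j-1)}_0=\mathrm{id}$ and $\Psi^{(j-1)}_0(z)=z/l_{j-1}$, yields the recursion $h_{j+1}=\mathcal Q_j h_j$ together with $(\mathcal R^j)'v=\mathcal L_j h_j$, and hence
\[
(\mathcal R^N)'v \;=\; \mathcal L_{\alpha_N}\,\mathcal Q_{N-1}\,\mathcal Q_{N-2}\cdots \mathcal Q_0\,\mathcal M_\alpha v.
\]
Crucially, $\mathcal M$ enters only at the initial step, so only the Brjuno-type hypothesis for $\alpha$ itself is required; the applicability of Lemma~\ref{lem-Q} at each $\alpha_j$ is automatic because $l_j<0.01$ is built into the definition of $n(\alpha_j)$.

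Next I estimate this composition in the seminorm $\|w\|_2:=\sup_{\Pi_\eps}|w''|$, which, as noted in the paragraph preceding Lemma~\ref{lem-Q}, is equivalent to the supremum norm on $V_0$ with constants depending only on $\eps$. By Lemma~\ref{lem-Q}, for $\eps$ sufficiently large each $\mathcal Q_j$ satisfies $\|\mathcal Q_j H\|_2\le 0.1\|H\|_2$ uniformly in $j$, so iterating gives $\|\mathcal Q_{N-1}\cdots\mathcal Q_0 h_0\|_2\le (0.1)^N\|h_0\|_2$. Lemma~\ref{lem-tangent} bounds $\|h_0\|_{C(\Pi_{0.9\eps})}\le c(\alpha,\eps)\|v\|$, and Cauchy estimates on the slab between $\Pi_{0.9\eps}$ and $\Pi_{\eps/2}$ convert this into $\|h_0\|_2\le C_1(\alpha,\eps)\|v\|$. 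Since $(\mathcal L_{\alpha_N}H)''=\mathcal L_{\alpha_N}(H'')$, $\|\mathcal L_{\alpha_N}\|\le 2$, and $\mathcal L_{\alpha_N}h_N\in V_0$, the equivalence of norms yields $\|(\mathcal R^N)'v\|\le C(\alpha,\eps)(0.1)^N\|v\|$, which is the theorem.

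The main bookkeeping obstacle is that the ambient strips of the operators in the composition do not all coincide: $h_0$ is defined only on $\Pi_{0.9\eps}$, whereas the final bound is sought on $\Pi_\eps$. This is reconciled by a careful reading of the proof of Lemma~\ref{lem-Q}: the identity $(\mathcal Q h)''(z)=l^2\Psi'''_{zz\zeta}(lz)+l\,h''(lz)$ shows that both terms are controlled whenever $lz\in\tilde R$, that is, whenever $z\in\Pi_{1.5\eps}$; because $l<0.01$, the argument $lz$ always lands deep inside any strip on which $h''$ is controlled. One in fact obtains the stronger bound $\|(\mathcal Q h)''\|_{C(\Pi_{1.5\eps})}\le 0.1\,\|h''\|_{C(\Pi_{\eps/2})}$, and monotonicity of the supremum under the inclusion $\Pi_\eps\subset\Pi_{1.5\eps}$ then transfers the bound onto $\Pi_\eps$ after each step of the iteration.
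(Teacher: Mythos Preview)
Your proof is correct and follows the same approach as the paper: factor $(\mathcal R^N)'=\mathcal L_{\alpha_N}\mathcal Q_{N-1}\cdots\mathcal Q_0\mathcal M_\alpha$, use the contraction of each $\mathcal Q_j$ in the $\|\cdot\|_2$-norm from Lemma~\ref{lem-Q}, and absorb the single $\mathcal M_\alpha$ and $\mathcal L_{\alpha_N}$ into the constant via Lemma~\ref{lem-tangent} and the equivalence of norms. Your final paragraph on the strip bookkeeping (that the rescaling $z\mapsto lz$ inside $\mathcal Q$ lands the argument deep inside any strip on which $h''$ is controlled, so the iteration can be run on $\Pi_\eps$ even though $h_0$ lives only on $\Pi_{0.9\eps}$) makes explicit a point the paper leaves implicit, and your remark that $\mathcal M$ appears only once---so the Diophantine-type hypothesis is needed only for $\alpha$ itself---is a useful clarification.
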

\begin{proof}
% First, consider $m$ such that $\log q_{n+1}/q_n < 2\pi0.05  \eps$ for $n>m$. Let  $R_\beta:= \mathcal R^m R_\alpha$; then $\beta$, $\eps$  satisfy assumpltions of Lemma \ref{lem-tangent}. This reduces the statement to the case when $\alpha, \eps$ satisfy assumptions of Lemma \ref{lem-tangent} (because $\|(\mathcal R^m)'\|$ is bounded).
We assume that $\eps$ is sufficiently large so that it satisfies assumptions of Lemma \ref{lem-Q}. Since $\alpha\in \mathcal B$, assumptions of Lemma \ref{lem-tangent} are satisfied.

Since $\|\mathcal L_\alpha v\|_2\le 2 \|v\|_2$,   $\|\mathcal M_\alpha v\|_{C(\Pi_{0.9\eps})} <c(\alpha, \eps)\|v\|_{C(\Pi_\eps)}$, and norms $\|\cdot \|$ and $\|\cdot\|_2$ are equivalent, Lemma \ref{lem-Q} implies that
$$(\mathcal R^N)' = \mathcal L_{\beta_n} \mathcal Q_{N} \dots \mathcal Q_1 \mathcal M_{\alpha}$$
satisfies
$$
\|(\mathcal R^N)'v \|< C (0.1)^N  \|v\|
$$
where $C$ depends on $\eps, \alpha$ only.

\end{proof}
This implies  uniform contraction on $V_0$ for any $\alpha\in \mathcal B$.

\section{Stable foliation of $\cR$}

\subsection{Maps conjugated to the rotation lie in the strong stable foliation of $\cR$}

Let us begin with proving the macroscopic version of Theorem~\ref{th:infcont}:
\begin{theorem}
  \label{th:contraction}
%  Suppose that $\alpha$ satisfies $\log q_{n+1}/q_n\to 0$.
Fix $\eps, C, \nu, \mu$.  Suppose that $f\in \mathcal D_{\eps}$,  $\rot (f) = \alpha\in \mathcal B$, $\Phi(\alpha)< C$, the map  is analytically conjugate to the rotation $R_\alpha$,
 $$
 f=\xi R_\alpha \xi^{-1},
 $$
where $\xi(0)=0$, $\xi$ is defined in $\Pi_\nu$, and $\xi(\Pi_{\nu})$ contains $B_{\mu}(0)$.

If $f$ is $\theta(\mu, \nu, C)$-close to $R_{\alpha}$, then $\mathcal R^n f$ are defined and $$\dist (\mathcal R^n f, \mathcal R^n R_{\alpha})\to 0\text{  as }n\to +\infty$$ at a geometric rate.

\end{theorem}

% Note that the condition ``$\xi(\Pi_\eta)$ contains a horizontal  strip around $\bbR/\bbZ$'' is equivalent to ``$\xi^{-1}(\bbR/\bbZ)$ is inside $\Pi_{\eta}$.''
\begin{proof}
  If $f$ is $\theta=\theta(\mu, C)$-close to rotation, then the union of images of $B_{\mu}(0)$ under several first iterates of $f$ contains the strip $\Pi_{\mu/2}$. Thus
  $$\xi(\Pi_{\nu})\supset \Pi_{\mu/2}.$$

Since $f_\zeta$ is conjugate to $R_\alpha$ via $\xi$, then  $\mathcal R f_\zeta$ is conjugate to the rotation $R_\beta=\mathcal R R_\alpha$ via $\hat \xi = \Psi \xi (lz)$.
Informally, since $\Psi(z)\approx z/l$, the map $\xi\mapsto \hat \xi$ is a rescaling, thus contracts to $\text{Id}$.  This implies the statement.

More specifically, we start with making several renormalizations $$\mathcal R \colon \mathcal D_\eps \mapsto \mathcal D_{1.5\eps},\; \mathcal R \colon \mathcal D_{1.5\eps} \mapsto \mathcal D_{1.5^2\eps},\cdots$$ which are well-defined due to Lemma \ref{lem-domain2}. Since all the maps $\Psi_k$ (the change of coordinates $\Psi$ for the $k$-th iterate of $\cR$) used in these renormalizations are close to linear, after $N$ renormalizations, the map $\mathcal R^N f$ will be conjugate to a rotation $\mathcal R^N R_\alpha$ by a map $\tilde \xi$ that is defined in  $\Pi_{1.5^N \nu} $ and satisfies  $\tilde \xi(\Pi_{1.5^N \nu}) \supset \Pi_{1.5^N\mu/2}$. If
$f$ is sufficiently close to $R_\alpha$ and $N$ is sufficiently large, this implies that   $\tilde \xi'$ is close to $1$ in $\Pi_{\eps}$.

Thus we have reduced the general case to the case when $\eps$ is larger than a given universal constant and  $$\Pi_{\eps/3} \subset \xi(\Pi_{\eps/2}) \subset \Pi_{\eps}\text{  and }0.5<|\xi'|<2\text{ in }\Pi_{\eps/2}.$$

Note that the number of renormalizations $N$ we have to make only depends on $\mu, \nu$. The distance between $f$ and rotations required to have almost linear $\Psi_k$ only depends on $N$ and $\Phi(\alpha)$, due to the item \eqref{it-lin-ref} of the Technical Lemma and the lower bound on the lengths of fundamental intervals. Thus the required distance between $f$ and rotations only depends on $\mu, \nu, C$.

Further, we show that  $|\xi''/\xi'|_{C(\Pi_{\eps/2})}$ (nonlinearity of $\xi$) decreases provided that  $\xi $ is defined in a thick strip as above (thus close to identity) and $\eps$ is sufficiently large.

The direct computation shows that
\begin{equation}
 \label{eq-nonlin-xi}
 \frac{\hat \xi''}{\hat \xi'} = \left. \frac{\Psi''}{\Psi'}\right|_{\xi(lz)} \cdot \xi'(lz) \cdot l + l\left. \frac{\xi''}{\xi'}\right|_{lz}.
\end{equation}
Let us estimate the first summand. In the estimates below, let $c$ denote any universal constant (not necessarily the same constant as in the Technical Lemma).
For $\eps>1$, due to the Technical Lemma (item \ref{it-lin-ref}),
\begin{equation}\label{eq-Psi-lin}
 \|\Psi(zL) - z\|_{C(R)} \le c\eps^{1-2/p} \dist_{C(\Pi_{0.1\eps})}(f^n, R_{n\alpha}).
 \end{equation}
 The following lemma relates the latter distance to the nonlinearity of $\xi$.

  \begin{lemma}
  If $f$ is analytically conjugate to a rotation $R_\alpha$ via $\xi\colon \Pi_{\eps/2}\to \Pi_\eps$, and $\xi^{-1}$ is defined in $\Pi_{0.1\eps}$, then
$$  \dist_{C(\Pi_{0.1\eps})}(f^n, R_{n\alpha}) \le  \{n\alpha\} \cdot c \|\xi''/\xi'\|_{C(\Pi_{0.5\eps})}.$$
 \end{lemma}
  \begin{proof}
Since $\xi$ commutes with the unit translation, the quantities $|\xi'-1|$, $|(\xi^{-1})'-1|$ are bounded on each circle $\bbR/\bbZ+i\theta$ by $c\|\xi''/\xi'\|$, where $c$ in a universal constant. So for  $z\in \Pi_{0.1\eps}$, we have
\begin{eqnarray*}
  |f^n(z) - z - \{n\alpha\}| &=  |\xi(\xi^{-1}(z)+\{n\alpha\}) - \xi(\xi^{-1}(z + \{n\alpha\}))|\\
  & = |\xi'(\tau)| \cdot |\xi^{-1}(z)+\{n\alpha\} - \xi^{-1}(z + \{n\alpha\})|\\
  &= |\xi'(\tau)| \cdot |-(\xi^{-1})'(\lambda) \{n\alpha\}+\{n\alpha\}| \\
  & \le \{n\alpha\} \cdot c \|\xi''/\xi'\|_{C(\Pi_{0.5\eps})}.
\end{eqnarray*}
\end{proof}

Now it remains to estimate $\Psi''/\Psi'$ using our estimate \eqref{eq-Psi-lin} on $\Psi-z/l$. This part of  the argument resembles the end of the proof of item \eqref{it-deriv} in the Technical Lemma.

Extend $\Psi$ to the domain  $\hat R = [-\eps l, \eps l] \times [-1.5i\eps l, 1.5i\eps l]$ using iterates of $f^n$. We need approximately $\eps$ forward and backward  iterates. Since each iterate $f^{nk}$ is conjugate to $R_{nk\alpha}$ via $\xi$,  in the new rectangle, we estimate using the previous lemma and \eqref{eq-Psi-lin}
$$\dist_{\hat R} (\Psi, z\mapsto z/l)< (c\eps^{1-2/p}+c\eps)  \|\xi''/\xi'\|_{C(\Pi_{0.5\eps})}.$$
Now, Cauchy estimates in $$\tilde R=R\cap \Pi_{l\eps} \subset \hat R = [-\eps l, \eps l] \times [-1.5i\eps l, 1.5i\eps l]$$ imply that
 $$\|\Psi''\|_{C(\tilde R)} \le (c\eps^{1-2/p}+c\eps)l^{-2}\eps^{-2}  \|\xi''/\xi'\|_{C(\Pi_{0.5\eps})},$$
 $$\|\Psi' -1/l\|_{C(\tilde R)} \le (c\eps^{1-2/p}+c\eps)l^{-1}\eps^{-1}\, \|\xi''/\xi'\|_{C(\Pi_{0.5\eps})}.$$

Thus $l\cdot \Psi''/\Psi'$ in $\tilde R$ can be estimated by $c\eps^{-1}   \|\xi''/\xi'\|_{C(\Pi_{0.5\eps})}. $

As mentioned above, we assume that $$0.5<|\xi'(lz)|<2.$$
% Also, for $\xi$,  Cauchy estimates imply  $|\xi'(lz)|<\eps^{-1} \sup |\xi(lz)|< c$ on the domain $\{z | \xi(lz) \in \Pi_{\eps}\}\supset \Pi_\eps$.
% Thus the first summand can be estimated by $$c\eps^{-1}   \|\xi''/\xi'\|_{C(\Pi_{0.5\eps})}.$$

% We conclude that for sufficiently large $\eps$, $$|\Psi''/\Psi'| \xi'(lz)l\leq 0.1  \left\|\frac{\xi''}{\xi'}\right\|_{C(\Pi_{\eps/2})}.$$
Finally, using \eqref{eq-nonlin-xi} we get
 $$
\left\|\frac{\hat \xi''}{\hat \xi'}\right\|_{C(\Pi_{\eps/2})}\le  0.1\left\|\frac{\xi''}{\xi'}\right\|_{C(\Pi_{\eps/2})}
$$
 if $\eps$ is larger than a universal constant.
Now Lemma \ref{lem-domain2} implies that all iterates of $\mathcal R$ are defined on $f$. The above estimate shows that $\dist (\mathcal R^n f, \mathcal R^n R_{\alpha})\to 0$  as $n\to +\infty$ at a geometric rate.
\end{proof}

\subsection{Strong stable manifolds of rotations $R_\alpha$ with $\alpha\in\cB$ consist of conjugacy classes}
Let $W^s(R_{\alpha})\subset \mathcal D_\eps$ be the stable manifold of $R_\alpha$ under the action of $\mathcal R$.
%$$
%W^s(R_\alpha) = \{f\in \mathcal D_\eps \mid \mathcal R^n f \text{ are defined for all }n>0 \text{ and } dist(\mathcal R^n f, \mathcal R^n R_\alpha)\to 0 \}.
%$$
The next theorem holds the key to linking Risler's and Yoccoz's theorems. Let $C_0$ be the same universal constant as in Yoccoz's theorem.
\begin{theorem}
  \label{th:ourconj}
Suppose that $\alpha$ is a Brjuno number. Let $\cV\ni R_\alpha$ be a connected analytic submanifold in $\mathcal D_{\eps}$ such that $\cV\subset W^s(R_\alpha)$. If $f\in \cV$ is sufficiently close to $R_\alpha$,  then $f$ is analytically conjugate to $R_\alpha$ by a conformal change of coordinate $\xi $, $f = \xi R_\alpha \xi^{-1}$, in some open substrip of $\Pi_\eps$.

% Moreover, this strip always contains $\Pi_{\eps/3}$, and if $\eps>\frac{1}{2\pi}\Phi(\alpha)+C_0$, then for any $\hat \eps<\eps- \frac{1}{2\pi}\Phi(\alpha)-C_0$, for  $f$ sufficiently close to $R_\alpha$, $\xi$ is  defined in $\Pi_{\hat \eps}$.
 
\end{theorem}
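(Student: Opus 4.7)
The plan is to iterate $\cR$ on $f$ until the strip of definition becomes wide enough for Yoccoz's Theorem \ref{th-Yoc1} to produce an analytic conjugacy of $\cR^N f$ to a rotation, and then unwind the $N$ renormalizations to transport this conjugacy back to $f$ itself. Two ingredients drive the argument: the operator $\cR\colon \cD_\eps\to\cD_{1.5\eps}$ widens the strip at each step, so that formally after $N$ iterations we work in $\Pi_{1.5^N\eps}$; and the hypothesis $\cV\subset W^s(R_\alpha)$ guarantees that $f_n := \cR^n f$ converges to $R_{\alpha^{(n)}}$, where $\alpha^{(n)}$ is the rotation number of $\cR^n R_\alpha$ (an iterated Gauss-type shift of the continued fraction of $\alpha$, by the computation in Lemma \ref{lem-expand}).

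\textbf{Iteration in widening strips.} Lemma \ref{lem-domain2} applies verbatim to every width $1.5^k\eps$, so $\cR$ extends analytically to $\cR\colon \cU_{\alpha^{(k)}} \subset \cD_{1.5^k\eps}\to \cD_{1.5^{k+1}\eps}$ near each $R_{\alpha^{(k)}}$. Analytic continuity of each of these extensions then transports closeness of $f_k$ to $R_{\alpha^{(k)}}$ in $\cD_{1.5^k\eps}$ into closeness of $f_{k+1}$ to $R_{\alpha^{(k+1)}}$ in $\cD_{1.5^{k+1}\eps}$. Inductively, for any fixed $N$, by taking $f$ sufficiently close to $R_\alpha$ in $\cD_\eps$ I can make $f_N$ arbitrarily close to $R_{\alpha^{(N)}}$ in $\cD_{1.5^N\eps}$. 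Moreover, since $\alpha$ is Brjuno and the Brjuno property descends under the Gauss map via $\Phi(\alpha)=\log(1/\alpha)+\alpha\Phi(G\alpha)$, every $\alpha^{(n)}$ is Brjuno with finite $\Phi(\alpha^{(n)})$.

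\textbf{Yoccoz at the top, descent to the bottom.} Choose $N$ large enough that $1.5^N\eps$ comfortably exceeds $\tfrac{1}{2\pi}\Phi(\alpha^{(N)})+C_0$, and take $f$ close enough to $R_\alpha$ so that $f_N$ enters the Yoccoz neighborhood of $R_{\alpha^{(N)}}$. Theorem \ref{th-Yoc1} then supplies a conformal real-symmetric $h_N$ with $h_N\circ f_N\circ h_N^{-1} = R_{\alpha^{(N)}}$ on $\Pi_{1.5^N\eps/2}$. Now I descend one level at a time. Given $g$ with $\cR g = \tilde\Psi_g P_g \tilde\Psi_g^{-1}$ and a conformal conjugacy of $\cR g$ to a rotation $R_\gamma$, composition with $\tilde\Psi_g$ gives a conformal conjugacy of the first-return map $P_g$ on the cylinder $R/g^{n(g)}$ to $R_\gamma$; lifting to $R\subset \Pi$ and propagating by the iterates $g^j(R)$ for $0\le j< n(g)$ extends this to a conformal conjugacy of $g$ itself to $R_{\rho(g)}$ on an open sub-annulus of $\Pi$ around $\bbR/\bbZ$. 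Applying this descent successively from level $N$ down to level $0$ yields the desired conformal $\xi$ with $f = \xi R_\alpha \xi^{-1}$ on an open substrip of $\Pi_\eps$.

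\textbf{Main obstacle.} The crux is controlling the strip of definition through the $N$ descents. At level $k$ the descent loses strip height roughly proportional to $\log(1/l_k)$, where $l_k=\{n_k\alpha^{(k)}\}$ is the width of the fundamental domain, and the total loss summed over all $N$ descents is essentially a partial Yoccoz-Brjuno sum along the orbit of $\alpha$. The Brjuno assumption ensures convergence of this sum, while the exponentially large starting width $1.5^N\eps/2$ from Yoccoz dominates the total loss, so that a substrip of definite positive width around $\bbR/\bbZ$ survives back in $\Pi_\eps$. Making this telescoping estimate quantitative—matching the widening provided by $\cR$ against the Brjuno-controlled loss at each descent step—is the technical heart of the proof, and mirrors the passage from Theorem \ref{th-Yoc2} to Theorem \ref{th-Yoc1} in \cite{Yoccoz2002}.
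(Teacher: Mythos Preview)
Your overall strategy---iterate $\cR$ to widen the strip, apply Yoccoz at a high level, then descend through the renormalization charts---is natural, and indeed echoes Risler's original argument. But there is a genuine gap at the step where you invoke Theorem~\ref{th-Yoc1}. That theorem, as stated, applies to an \emph{analytic circle diffeomorphism}: a map that preserves $\bbR/\bbZ$ and hence has a well-defined rotation number. Your $f_N=\cR^N f$ is only an element of $\cD_{1.5^N\eps}$ close to $R_{\alpha^{(N)}}$; nothing in your setup guarantees that $f_N$ (or $f$) has any invariant circle at all, so you cannot feed $f_N$ directly to Yoccoz. The phrase ``conformal real-symmetric $h_N$'' makes this explicit: real symmetry is an input hypothesis of Yoccoz's result, not something you have for a general $f\in\cV$.

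The paper's proof handles exactly this issue, and by a different mechanism. It uses the stable-manifold hypothesis only to check that the forward/backward orbit $\{f^k(0)\}$ is injective, which makes $(f,k\alpha)\mapsto f^k(0)$ a holomorphic motion over $\cV$. The $\lambda$-lemma then produces an $f$-invariant quasicircle $\gamma_f$ homotopic to $\bbR/\bbZ$, on which $f$ has rotation number $\alpha$. Uniformizing the two complementary cylinders bounded by $\gamma_f$ yields genuine analytic \emph{circle} diffeomorphisms $g_\pm$; Yoccoz now applies legitimately to these, and the resulting conjugacies $\phi_\pm=h_\pm\circ\Xi_\pm$ glue along $\gamma_f$ by conformal removability of quasicircles. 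So the missing ingredient in your argument is precisely the construction of an invariant curve before Yoccoz can be invoked; the paper supplies it via holomorphic motions rather than by iterating $\cR$ and descending.
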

\begin{proof} 
  Let $V=\{k\alpha\mid k\in \bbZ\}\subset \bbR/\bbZ$.
  Consider  a map $\mathbf P$ on $W^s(R_\alpha)\times V$ defined by  $\mathbf P(f, k\alpha): = f^k(0)$.
  This map has the following properties:

\begin{itemize}
 \item $\mathbf P$ is defined on $ W^s(R_\alpha)\times V$.
 \item $\mathbf P$ is identical on $\{R_\alpha\}\times V$.
 \item $\mathbf P(\cdot, a)$ is holomorphic on $f$ when $a\in V$ is fixed.
 \item $\mathbf P(f, \cdot)$ is injective for each fixed $f$.
\end{itemize}
The last property holds true because otherwise $f^k(0)=f^m(0)$ for some $k,m,$ and since $f$ is univalent in $\Pi_\eps$, we have that $f^{|k-m|}(0)=0$. This implies that all high renormalizations of $f$ have a fixed point at $0$ and cannot be close to  $\mathcal R^n(R_\alpha)$, which contradicts $f\in W^s(R_\alpha)$.

It thus follows that $\mathbf P$ is a holomorphic motion over $\cV$. By the $\lambda$-lemma of Man\~e-Sad-Sullivan \cite{MSS} (see \cite{BR} for the infinite-dimensional version),
it extends to the holomorphic motion of the closure of the orbit of $0$ by $f\in\cV$. Since for $R_\alpha$ this orbit is the circle, we see that 
$$\gamma_f:=\overline {\{f^n(0), n\in \bbZ\}}$$ is a quasicircle which is 
homotopic to $\bbR/\bbZ$ in $\Pi_\eps$. The map $f|_{\gamma_f}$ is quasiconformally conjugate to $R_\alpha|_\TT$ and thus has rotation number $\alpha$.

Take an open cylinder between $\gamma_f$ and the upper boundary of $\Pi_\eps$. Construct a holomorphic mapping  $\Xi_+$ to a standard cylinder $\{z\in \bbC/\bbZ \mid 0<\Im z<a\}$ with $\Xi_+(0)=0$. Extend $\Xi_+$ to the boundaries by the Carath\'eodory principle. Now $f$ is conjugate via $\Xi_+$ to a map $g_+=\Xi_+ f \Xi_+^{-1}$ that is analytic above $\bbR/\bbZ$, continuous up to $\bbR/\bbZ$, and preserves $\bbR/\bbZ$. Due to the Schwarz Reflection Principle, we can  extend $g_+$ to a full neighborhood of $\bbR/\bbZ$, hence  $g_+$ is analytic in this neighborhood.

We now apply Yoccoz's Theorem~\ref{th-Yoc1} to conjugate $g_+$ to $R_\alpha$:
$$h_+\circ g_+ \circ h^{-1}_+=R_\alpha,$$
where $h_+$ is conformal in some strip around $\bbR/\bbZ$,  and $h_+(0)=0$.

Let us now repeat the above procedure in the open cylinder between $\gamma$ and the {\it lower} boundary of $\Pi_\eps$. In the same way we obtain maps $\Xi_-$ and $h_-$.
We see that $$\phi_\pm\equiv h_\pm\circ \Xi_\pm$$ conjugate $f$ to $R_\alpha$ on their respective domains of definition. Moreover, they  extend homeomorphically to the invariant quasicircle $\gamma_f$, mapping it to $\TT$. Since both of them map $0$ to $0$, they coincide on $\gamma_f$.
As quasicircles are conformally removable, $\phi_\pm$ glue along $\gamma_f$ to an analytic map defined in a strip around $\gamma_f$.

% Now analyze the width of this strip. We will need the following estimate.

% close to  $\Pi_{\eps/2}$. If $\eps> \Phi(\alpha)+C_0+0.1$, this analytic map is defined in the strip of width $\hat\eps=\eps-\frac{1}{2\pi}\Phi(\alpha)-C_0-0.1$.
%
% ; if $f$ was close to $R_\alpha$, then $\Xi_+$ is close to identity and  $g^+$ is close to $R_\alpha$.
%  If $\eps> \Phi(\alpha)+C_0+0.1$, then $g^+$ is defined in a strip of width approximately $\eps$, so due to Theorem \ref{th-Yoc2}, $h_+$ is defined in $\Pi_{ \eps-\Phi(\alpha) - C_0}$.
 \end{proof}
\subsection{Thickness of the band}
We will prove that the conjugacy from Theorem \ref{th:ourconj} is defined in a uniformly thick strip.
    We need a uniform estimate on $\dist(\gamma_f, \bbR/\bbZ)$ depending on $\dist(f, R_\alpha)$, where $\gamma_f = \overline {\{f^n(0)\}}$. Recall that
    $$\mathcal B_{C} = \{\alpha \in \mathcal B \mid \Phi(\alpha)<C\}.$$
    \begin{lemma}
   \label{lem-curves}
        For $M$, $\eps$ fixed, for each $a$ there exists $b$ with the following property.

        Suppose that $f\in \mathcal D_{\eps}$ is an analytic map such that $f|_{\gamma_{f}}$ has rotation number $\alpha, \Phi(\alpha)<C$, and $f$ lies in the stable manifold $W^s(R_\alpha)$.
%         ; and suppose also that the corresponding invariant circles $\gamma_{f_j}\subset\Pi_{\eps/2}$.

        Then $|f-R_\alpha|<b$ implies that  $\gamma_{f}$ parametrized by $\xi_f$ is $a$-close to $\bbR/\bbZ$ in $C$ metric.
    \end{lemma}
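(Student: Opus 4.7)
The strategy is a contradiction argument using compactness together with the Bers--Royden extension of the holomorphic motion $\mathbf P$ built in the proof of Theorem~\ref{th:ourconj}. Suppose the conclusion fails. Then we can find $a_0>0$ and sequences $\alpha_n\in\cB_C$, $f_n\in W^s(R_{\alpha_n})\subset\mathcal D_\eps$ with $|f_n-R_{\alpha_n}|_{\Pi_\eps}\to 0$, yet the parametrizations $\xi_{f_n}\colon\bbR/\bbZ\to\gamma_{f_n}$ produced in Theorem~\ref{th:ourconj} (as the restrictions of the glued map $\phi^{-1}$) satisfy $\|\xi_{f_n}-\mathrm{id}\|_{C(\bbR/\bbZ)}\ge a_0$. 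Passing to subsequences, we may assume $\alpha_n\to\alpha_\infty\in\bbR/\bbZ$ and $f_n\to R_{\alpha_\infty}$ uniformly on $\Pi_\eps$.

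Next, I would invoke the Bers--Royden version of the $\lambda$-lemma to extend $\mathbf P$ to a holomorphic motion of a full neighborhood of $\bbR/\bbZ$. Because $\mathbf P(R_{\alpha_n},\cdot)=\mathrm{id}$ and $\mathbf P$ is holomorphic in $f$, the Beltrami coefficient of the quasiconformal extension of $\xi_{f_n}$ on a fixed strip around $\bbR/\bbZ$ has $L^\infty$-norm bounded by a universal multiple of $|f_n-R_{\alpha_n}|_{\Pi_\eps}$, and therefore tends to $0$. By the standard compactness theorem for quasiconformal maps whose Beltrami coefficients go to zero, we can extract a further subsequence along which $\xi_{f_n}\to\xi_\infty$ uniformly on $\bbR/\bbZ$, with $\xi_\infty$ conformal on a strip around $\bbR/\bbZ$ and $\xi_\infty(0)=0$. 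Passing to the limit in the conjugacy identity $\xi_{f_n}\circ R_{\alpha_n}=f_n\circ\xi_{f_n}$, together with $\alpha_n\to\alpha_\infty$ and $f_n\to R_{\alpha_\infty}$, gives $\xi_\infty\circ R_{\alpha_\infty}=R_{\alpha_\infty}\circ\xi_\infty$.

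If $\alpha_\infty\notin\bbQ$, the only analytic self-map of a strip around $\bbR/\bbZ$ commuting with $R_{\alpha_\infty}$ and fixing $0$ is the identity; hence $\xi_\infty\equiv\mathrm{id}$ on $\bbR/\bbZ$, contradicting $\|\xi_\infty-\mathrm{id}\|_{C(\bbR/\bbZ)}\ge a_0$.

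The main obstacle is the case $\alpha_\infty=p/q\in\bbQ$, because a conformal map commuting with $R_{p/q}$ need not be the identity. I would rule this out in two steps. First, the quantitative continued-fraction estimate $|\alpha-p/q|\ge c(C,q)$ for $\alpha\in\cB_C$ (coming from the requirement $\log q_{n+1}/q_n<2\pi C$ at the step where $q_n=q$, which forces $|\alpha-p/q|\gtrsim e^{-Cq}/q$) eliminates $\alpha_\infty=p/q$ with $q\le 100$, since this lower bound is uniformly positive for bounded $q$. For $q>100$, I would iterate the renormalization operator: by the uniform hyperbolicity guaranteed by Theorem~\ref{th-main}(\ref{it-hyp}), $\mathcal R^k f_n\in W^s(R_{\mathcal R^k\alpha_n})$ remains close to $R_{\mathcal R^k\alpha_n}$ at a geometric rate, while the Gauss-like action on rotation numbers $\alpha_n\mapsto\mathcal R^k\alpha_n$ shifts the continued fraction of $\alpha_n$ so that after finitely many steps the accumulation point of $\mathcal R^k\alpha_n$ is either irrational or a rational with denominator $\le 100$, both already handled. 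Translating a small $\xi_{\mathcal R^k f_n}-\mathrm{id}$ back via the (uniformly controlled) renormalization charts then gives the required smallness of $\xi_{f_n}-\mathrm{id}$, completing the contradiction.
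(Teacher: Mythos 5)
Your proposal takes a genuinely different route from the paper, and it has a genuine gap at its quantitative core. The paper's proof is direct and elementary: it covers the orbit closure $\gamma_f$ by the dynamically defined rectangles $R_n=\Psi_1^{-1}\cdots\Psi_n^{-1}([0,1]\times[-\eps,\eps])$, chooses $n$ so that $\mathrm{diam}\,R_n<a/2$ for every rotation with $\alpha\in\cB_C$, notes that the same holds for nearby $f$ because the charts $\Psi_k$ are close to linear (item (3) of the Technical Lemma), and then uses that the finitely many iterates $f^k$, $k\le N(n)$, have derivative close to $1$, so each arc $f^k(R_n)\cap\gamma_f$ lies in the $a$-neighborhood of the corresponding arc of $\bbR/\bbZ$. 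No holomorphic motions, no compactness, and -- crucially -- no input about the size or structure of $W^s(R_\alpha)$ is needed.

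The gap in your argument is the step asserting that the Beltrami coefficient of the extended motion satisfies $\|\mu\|_\infty\le c\,|f_n-R_{\alpha_n}|_{\Pi_\eps}$. The motion $\mathbf P$ of Theorem~\ref{th:ourconj} is defined only over a connected \emph{analytic submanifold} $\cV\subset W^s(R_{\alpha})$; the hypotheses of Lemma~\ref{lem-curves} give you only $f\in W^s(R_\alpha)$, with no analytic family through $f$, so that machinery is not directly available. Even granting a suitable $\cV\ni f$, the $\lambda$-lemma (Bers--Royden) bounds the dilatation through the hyperbolic metric of the base, so your estimate requires the base of the motion to contain a ball around $R_{\alpha_n}$ of radius bounded below \emph{uniformly in $\alpha\in\cB_C$}. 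That uniformity is not known at this point in the paper -- it is exactly what Lemma~\ref{lem-charts} and Theorem~\ref{th:leafs} establish later, and they use Lemma~\ref{lem-curves} to do so; your route therefore risks circularity, and without the uniform base the Beltrami bound, the quasiconformal compactness step, and hence the whole contradiction scheme are unsupported. A secondary, non-fatal point: the rational-limit case you labor over is vacuous. For any $p/q$, if $\alpha\in\cB_C$ and $|\alpha-p/q|<1/(2q^2)$ then $p/q$ is a convergent and $\log q_{m+1}/q_m\le c(C)$ forces $|\alpha-p/q|\ge c(C,q)>0$; positivity for the single fixed denominator $q$ of the putative limit already excludes $\alpha_\infty\in\bbQ$, with no need for uniformity in $q$. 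So your renormalization detour for $q>100$ is unnecessary -- which is fortunate, since as sketched (shifting continued fractions and ``translating back via uniformly controlled charts'') it is not justified.
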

    \begin{proof}
% Since each $f_j$ is infinitely renormalizable, the orbit of $0$ lies in the intersection of the successive lifts of the annulus $\Pi_\eps$ by the linearizing coordinate corresponding to the $k$-th renormalization. This, together with  straightforward distortion estimates, implies that the set of iterates $\{f_j^l(0),\;0\leq l\leq q_n\}$ converges to the curve $\gamma_{f_j}$ geometrically fast in $n$. The claim readily follows.

Let $\Psi_1, \Psi_2, \dots$ be the charts that correspond to $f$ and its successive renormalizations. For each $n$, consider a small rectangle $R_n=\Psi^{-1}_1\dots \Psi^{-1}_n ([0,1]\times [-\eps, \eps])$. Since $f$ is infinitely renormalizable, the finite number $N(n)$ of images of $R_n$ under $f$ forms a strip $S_n$ in $\Pi_{\eps}$ that contains the orbit of zero.

Choose $n$ such that for any rigid rotation $R_\alpha$ with $\Phi(\alpha)<C$, we have $\mathrm{diam}\, R_n< a/2$. Then for sufficiently small $b$, we have the same inequality for $f$ whenever $|f-R_\alpha|<b$, $\rho( f )= \alpha, \Phi(\alpha)<C$, because $\Psi_k$, $k\le n$, are close to linear  (see item \eqref{it-lin-ref} of the Technical Lemma) and the lengths of the fundamental intervals used in the first $n$ renormalizations are bounded from below whenever $\Phi(\alpha)<C$. Also, for  sufficiently small $b$, all $(f^k)'$, $k\le N(n)$, are close to $1$ in $\Pi_{\eps/2}$ whenever $|f-R_\alpha|<b$. So each arc $f^{k} (R_n) \cap \gamma_f$ of $\gamma_f$  belongs to the $a$-neighborhood of the corresponding arc of $\bbR/\bbZ$ for $R_\alpha$. The claim follows.

    \end{proof}
This implies a stronger version of Theorem \ref{th:ourconj} above. Let $$\hat \eps : = \eps - \frac {1}{2\pi} C -C_0$$ where $C_0$ is  a constant from Yoccoz's theorem \ref{th-Yoc2}. Let $\mathcal U_\kappa(g)$ denote the $\kappa$-negihborhood of $g$ in $\mathcal D_\eps$.

\begin{lemma}
\label{lem-charts}
 In the assumptions of Theorem  \ref{th:ourconj} above, for $C$ fixed and for any  $\eps$ such that $\hat \eps > 0.1$,  there exists $\kappa>0$ such that if $\Phi(\alpha)<C$ and $f\in \mathcal V \cap \mathcal U_\kappa (R_\alpha)\subset \mathcal D_\eps$,
%  then $\xi_f$ is defined in $\Pi_{0.4\eps}$.
%  If $\hat \eps >0.1$,
 then $\xi_f$ is defined on $\Pi_{\hat \eps-0.1}$.

%  If $\eps$ is sufficiently large, we have  $|(\xi_f^{-1})'-1|_{\Pi_{\eps/3}}<0.1$.
\end{lemma}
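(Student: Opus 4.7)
The strategy is to quantify the construction in the proof of Theorem~\ref{th:ourconj}. Given $\delta>0$ to be specified, use Lemma~\ref{lem-curves} to choose $\kappa_1(\delta)$ so small that for every $f\in\cV\cap\cU_{\kappa_1}(R_\alpha)$ with $\alpha\in\cB_C$, the invariant quasicircle $\gamma_f=\overline{\{f^k(0)\}}$ lies within $\delta$ of $\bbR/\bbZ$ in the $C^0$-metric. Consequently the upper cylinder $C^+_f$ between $\gamma_f$ and $\{\Im z=\eps\}$ is a topological annulus of conformal modulus at least $\eps-\delta$, and its uniformization $\Xi_+\colon C^+_f\to\{0<\Im z<a_+\}$, normalized by $\Xi_+(0)=0$, satisfies $a_+\ge \eps-\delta$. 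A symmetric construction yields $\Xi_-$ and $a_-\ge \eps-\delta$.

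Next, I would show that as $\delta\to 0$ the uniformizations $\Xi_\pm$ tend to the identity uniformly on compact subsets. This follows from Carath\'eodory kernel convergence combined with the observation that $\gamma_f$ is a \emph{uniform} quasicircle: its quasiconformal dilatation is controlled by the dilatation of the holomorphic motion constructed in the proof of Theorem~\ref{th:ourconj}, which in turn depends only on the size of a neighborhood $\mathcal V$ over which the motion is defined. Shrinking $\kappa$ further, one can therefore ensure $\|\Xi_\pm-\mathrm{id}\|_{C^0}<\delta'$ on any prescribed compact region, for any prescribed $\delta'>0$. Setting $g_\pm:=\Xi_\pm\circ f\circ\Xi_\pm^{-1}$, which preserves $\bbR/\bbZ$ and extends analytically to $\Pi_{a_\pm}$ by Schwarz reflection, this produces $g_\pm$ arbitrarily close to $R_\alpha$ in the sup-norm on $\Pi_{a_\pm}$, uniformly over $\alpha\in\cB_C$.

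For the first statement, apply Yoccoz's Theorem~\ref{th-Yoc1} (with the remark providing a threshold $\kappa_{\mathrm{Yoc}}$ uniform over $\{\Phi(\alpha)<C\}$) to $g_\pm$ on $\Pi_{a_\pm}$: for $\kappa$ sufficiently small, the closeness hypothesis is satisfied and one obtains real-symmetric conformal conjugacies $h_\pm$ of $g_\pm$ to $R_\alpha$ defined on $\Pi_{a_\pm/2}$. Choosing $\delta<0.2\eps$ guarantees $a_\pm/2>0.4\eps$. The maps $\phi_\pm:=h_\pm\circ\Xi_\pm$ conjugate $f$ to $R_\alpha$ on $C^\pm_f$ and glue across the conformally removable quasicircle $\gamma_f$ exactly as in the proof of Theorem~\ref{th:ourconj}. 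The inverse $\xi_f=\phi^{-1}$ is then defined on $\Pi_{a_+/2}\cap\Pi_{a_-/2}\supset\Pi_{0.4\eps}$. For the second statement, assuming $\hat\eps>0.1$, instead choose $\delta<0.1$ (possible by the uniform quasicircle estimate) and apply the quantitative Theorem~\ref{th-Yoc2}: since $a_\pm>\eps-\delta>\tfrac{1}{2\pi}\Phi(\alpha)+C_0$, the theorem produces $h_\pm$ on $\Pi_{a_\pm-\Phi(\alpha)/(2\pi)-C_0}\supset\Pi_{\hat\eps-\delta}\supset\Pi_{\hat\eps-0.1}$, and the glued inverse $\xi_f$ is defined on $\Pi_{\hat\eps-0.1}$.

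The main technical obstacle is the quantitative uniform convergence $\Xi_\pm\to\mathrm{id}$ as $\gamma_f\to\bbR/\bbZ$, with constants uniform in $\alpha\in\cB_C$. The naive Carath\'eodory argument is not quantitative; the fix is to use that $\gamma_f$ is a quasicircle with dilatation controlled by $\kappa$ (via the $\lambda$-lemma) and that its $C^0$-distance to $\bbR/\bbZ$ is controlled by Lemma~\ref{lem-curves}. Together with standard distortion estimates for conformal maps of quasidisks whose boundaries converge in Hausdorff distance, this yields the required uniform $C^0$ estimate on $\Xi_\pm-\mathrm{id}$; all subsequent steps are routine applications of Yoccoz's theorems and the removability gluing already used in Theorem~\ref{th:ourconj}.
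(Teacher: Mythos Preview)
Your proposal is essentially correct and follows the same route as the paper: use Lemma~\ref{lem-curves} to make $\gamma_f$ close to $\bbR/\bbZ$, show the uniformizations $\Xi_\pm$ are close to the identity, deduce that $g_\pm$ is close to $R_\alpha$ on a tall strip, and then apply Yoccoz's Theorems~\ref{th-Yoc1} and~\ref{th-Yoc2} to obtain $h_\pm$ on strips of the desired width before gluing.

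Two small differences are worth noting. First, for the closeness of $\Xi_\pm$ to the identity the paper simply cites an external estimate (Lemma~2.1 of \cite{BBY3}), whereas you propose to derive it from Carath\'eodory kernel convergence together with quasicircle distortion bounds coming from the $\lambda$-lemma. Both work, but the paper's reference only needs the $C^0$-closeness of $\gamma_f$ to $\bbR/\bbZ$ and gives the estimate directly, so the quasicircle machinery is not actually required at this step. Second, the paper is careful to claim $\Xi_\pm\approx\mathrm{id}$ only on the region $0.1\eps<|\Im z|<\eps$, i.e.\ away from $\gamma_f$; it then uses the Maximum Principle (plus real symmetry of $g_\pm$ after Schwarz reflection) to propagate the smallness of $g_\pm-R_\alpha$ to the full strip. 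Your formulation asks for $\Xi_\pm\to\mathrm{id}$ ``uniformly on compact subsets,'' which near $\gamma_f$ is both harder to justify and unnecessary. Restricting the estimate on $\Xi_\pm$ to a subannulus bounded away from $\gamma_f$ and invoking the Maximum Principle for $g_\pm$ is the cleaner way to close the argument.
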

\begin{proof}
Note that our assumptions imply $\rot (f|_{\gamma_f}) =\alpha$ because $\mathcal V$ is connected.
  Using the previous lemma, we may, for each $a$, choose $\kappa$ so that $\gamma_f$ is $a$-close to $\bbR/\bbZ$ whenever $f\in \mathcal V \cap \mathcal U_\kappa(R_\alpha)$.

In the proof of Theorem  \ref{th:ourconj}, the chart $\xi_f$ was constructed as a composition of uniformizing maps $\Xi_\pm$ and maps $h_{\pm}$ provided by Yoccoz's theorem.

  For every $v>0$ there exists $a>0$ such that assuming that $\gamma_f$ is $a$-close to $\TT$ then $\Xi_\pm$ is $v$-close to the identity in $0.1\eps< |\Im z|< \eps$ (see e.g. Lemma~2.1 of \cite{BBY3}). This implies that if $a$ is small, then $g_{\pm} = \Xi_{\pm} f \Xi_{\pm}^{-1}$ in the proof of Theorem  \ref{th:ourconj} is uniformly close to identity in $0.2\eps< |\Im z|< \eps$, and thus in $\Pi_\eps$ by the Maximum Principle.

  Recall that $h_{\pm}$ is the analytic conjugacy of a circle diffeomorphism $g_{\pm} $ to a rotation. Due to  Yoccoz's theorem \ref{th-Yoc2}, $h_{\pm}$ are defined in  $\Pi_{\hat \eps}$ for  $\hat \eps > 0$.

%   Also, we have uniform estimates on $\dist_{C_1}(h_{\pm}, id)$.

%   \hrule

%   TODO: Yoccoz did not prove that the chart is $C^1$-close to identity in the strip when $g$ is close to rotation.

%   \hrule

  Since $\Xi_{\pm}$ is close to identity in $0.1\eps< |\Im z|< \eps$, this implies that $\xi_f$ is defined in
%   $\Pi_{0.4\eps}$ and in
  $\Pi_{\hat \eps-0.1}$ as required.

% The above implies the estimate $|(\xi_f^{-1})'-1|_{\bbR/\bbZ\pm i \eps/3}<0.1$, thus  $|(\xi_f^{-1})'-1|_{\Pi_{\eps/3}}<0.1$  by the maximum principle.

\end{proof}

\subsection{Leafs of the stable foliation of $\cR$}
To complete our discussion of the renormalization picture, it remains to construct the stable foliation of $\cR$.

\begin{theorem}
  \label{th:leafs}
  For every $C$, for each $\eps>c_1 C+c_2$ where $c_1, c_2$ are universal constants, there exists $\kappa>0$ such that the following holds. For every $\alpha, \Phi(\alpha)<C$, consider $\mathcal R \colon \mathcal D_\eps \to \mathcal D_\eps$ and take the connected component $\cV_\alpha$ of  the intersection
  $W^s(R_\alpha)\cap \mathcal U_\kappa(R_\alpha)$. Then $\cV_\alpha$  is a  complex analytic submanifold of $\cD_\eps$ of codimension one. Moreover, $\cV_\alpha$ is an analytic graph over a hyperplane.
\end{theorem}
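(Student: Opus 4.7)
The plan is to apply a Hadamard--Perron stable manifold theorem along the renormalization orbit $\mathcal{O}_\alpha:=\{R_{\alpha_n}\}_{n\ge 0}$ with $R_{\alpha_n}=\cR^n R_\alpha$. Along this orbit the tangent space $T_{R_{\alpha_n}}\cD_\eps = V_0\oplus V_1$ is $\cR'$-invariant, since $\cR$ sends rotations to rotations. By Lemma \ref{lem-expand}, $\cR'$ expands $V_1$ by at least $l_n^{-2}\geq 10^4$ at each orbit point; by Lemma \ref{lem-Q}, $\cR'$ contracts $V_0$ by a factor of at most $0.1$ per iterate in the norm $\|\cdot\|_2$. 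For the Hadamard--Perron argument I need uniformity over $\alpha\in\cB_C$ of these hyperbolicity constants together with the small-divisor constants $c(\alpha_n,\eps)$ of Lemma \ref{lem-tangent}: the latter I would bound by estimating the series $\sum_k e^{-2\pi(0.1\eps)|k|}/|e^{2\pi ik\alpha_n}-1|$ in terms of $\Phi(\alpha_n)$ and then controlling $\Phi$ along the orbit via the renormalization identity $\Phi(\alpha)=\log(1/\alpha)+\alpha\,\Phi(G(\alpha))$.

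With uniform hyperbolicity in place I would run the standard graph transform in the Banach affine manifold $\cD_\eps$. In affine coordinates at $R_{\alpha_n}$, decompose $f-R_{\alpha_n}=s+u\in V_0\oplus V_1$ and seek the local stable leaf at $R_{\alpha_n}$ as the graph of an analytic map $\sigma_n\colon B^{V_0}_\kappa\to V_1$ with $\sigma_n(0)=0$ and $d\sigma_n(0)=0$, subject to $\cR(\mathrm{graph}\,\sigma_n)\subset\mathrm{graph}\,\sigma_{n+1}$. This translates into a fixed-point equation for the sequence $(\sigma_n)$ whose Perron operator contracts at rate at most $0.1\cdot 10^{-4}$ on a suitable Banach space of bounded analytic sections; the fixed point is unique, and analyticity of the resulting $\sigma_0$ in its argument follows from the analytic implicit function theorem, using analyticity and compactness of $\cR$ (Lemma \ref{lem-domain2}). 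Set $\mathcal{W}_\alpha:=\mathrm{graph}(\sigma_0)$. By construction, $\mathcal{W}_\alpha$ is a codimension-one complex analytic submanifold of $\cD_\eps$ passing through $R_\alpha$, is an analytic graph over the hyperplane $R_\alpha+V_0$, and is contained in $W^s(R_\alpha)$.

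Finally I would identify $\mathcal{W}_\alpha$ with $\cV_\alpha$ on a (possibly smaller) neighborhood. One inclusion is automatic: $\mathcal{W}_\alpha\cap U_\kappa(R_\alpha)$ is a connected subset of $W^s(R_\alpha)\cap U_\kappa(R_\alpha)$ containing $R_\alpha$, so $\mathcal{W}_\alpha\cap U_\kappa(R_\alpha)\subset\cV_\alpha$. For the reverse inclusion, the uniqueness part of the Hadamard--Perron theorem applies: any $f\in W^s(R_\alpha)\cap U_\kappa(R_\alpha)$ whose forward orbit stays in the hyperbolic neighborhood must have its unstable component $u_0$ determined by $s_0$ through $u_0=\sigma_0(s_0)$, since the expanding iterates would otherwise force $\cR^n f$ to escape a neighborhood of $R_{\alpha_n}$; thus $f\in\mathcal{W}_\alpha$. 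Combining, $\cV_\alpha=\mathcal{W}_\alpha$ locally, which yields all conclusions of the theorem. The main obstacle will be the uniformity over $\cB_C$ of the hyperbolicity constants and of the radius $\kappa$ on which the Perron fixed point is defined analytically; both reduce to the uniform small-divisor estimates sketched in the first paragraph.
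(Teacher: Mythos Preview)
Your plan is a legitimate alternative, but it diverges sharply from the paper's argument, and one step of your sketch is shaky.

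\textbf{What the paper does.} The paper does \emph{not} run a non-autonomous Hadamard--Perron along the forward orbit of $R_\alpha$. Instead it approximates $\alpha$ by the periodic continued fractions $\alpha_n=[\overline{a_0,\dots,a_{n-1}}]$, so that $R_{\alpha_n}$ is a genuine hyperbolic periodic point of $\cR$ and the classical stable-manifold theorem yields $\cV_n=W^s_{\mathrm{loc}}(R_{\alpha_n})$. The crucial step is then geometric: by Theorem~\ref{th:ourconj} (which rests on Yoccoz's Theorem~\ref{th-Yoc1}) every $f\in\cV_n$ is analytically conjugate to $R_{\alpha_n}$ with the conjugacy $\xi_f$ defined on a uniformly thick strip (Lemma~\ref{lem-charts}). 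The resulting bound $|\xi_f'-1|<0.1$ gives a cone condition showing $\cV_n$ is a graph over $V_0$ on a $\kappa$-neighborhood with $\kappa$ \emph{independent of $n$}; Theorem~\ref{th:contraction} rules out relative boundary inside $U_\kappa(\mathcal T)$. One then passes to a limit $\cV_n\to\cV_\alpha$. So the uniformity over $\cB_C$ is extracted from Yoccoz's theorem, not from small-divisor estimates along the orbit.

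\textbf{Where your plan wobbles.} You propose to bound each $c(\alpha_n,\eps)$ by $\Phi(\alpha_n)$ and then control $\Phi$ along the renormalization orbit via $\Phi(\alpha)=\log(1/\alpha)+\alpha\,\Phi(G(\alpha))$. But this identity gives $\Phi(G(\alpha))=(\Phi(\alpha)-\log(1/\alpha))/\alpha$, which can be arbitrarily large even for $\alpha\in\cB_C$; iterating, $\Phi$ along the $\cR$-orbit can blow up roughly like $1/l$ and is not uniformly bounded. So that particular route to uniformity fails. The fix is already implicit in Theorem~\ref{th:infcont}: in the factorization $(\cR^N)'|_{V_0}=\mathcal L_{\beta_N}\,\mathcal Q_N\cdots\mathcal Q_1\,\mathcal M_\alpha$ the operators $\mathcal M$ and $\mathcal L$ telescope, so only the single constant $c(\alpha,\eps)$ at the \emph{base} point enters the $N$-step contraction. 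If you set up the graph transform with this telescoped product (or in an adapted norm using the conjugating operators $\mathcal M_{\alpha_n}$), you avoid needing $c(\alpha_n,\eps)$ along the orbit; the uniformity over $\cB_C$ then reduces to bounding $c(\alpha,\eps)$ for $\Phi(\alpha)<C$, which does hold for $\eps$ large depending on $C$.

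\textbf{Trade-offs.} Your route, if carried through, is more self-contained dynamically: it proves Theorem~\ref{th:leafs} without invoking Yoccoz. The paper's route uses Yoccoz as an input but gets two things at once: the uniform graph domain $\kappa$ and, essentially for free, the identification of the leaf with a conjugacy class (needed anyway for the Main Theorem). If you pursue your approach you will still need Theorem~\ref{th:ourconj} afterwards, and you will have to supply uniform second-order bounds on $\cR$ along the orbit for the nonlinear graph transform, which the paper never needs.
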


\begin{proof}
  Similar statements exist in the literature, as general corollaries of uniform hyperbolicity of
  compact operators in infinite dimensions; see, for instance, Lemma~VII.1 of \cite{Mane1981}.
  However, we were not able to find a
general  theorem which would fit our needs, so we will supply a proof starting from the stable/unstable manifold theory for periodic orbits of $\cR$.

  Let us write $\alpha=[a_0,a_1,\ldots]$ and set $\alpha_n$ equal to the continued fraction obtained by repeating $a_0,a_1,\ldots,a_{n-1}$ periodically.
 There exists $n_0\in\NN$ such that for all $n\geq n_0$ we have
  \begin{equation}
    \label{eq:phibound}
|\Phi(\alpha_n)|<2|\Phi(\alpha)|+c
    \end{equation}
    with a universal $c$; this is evident for $\Phi_0$ and thus holds for $\Phi$, the universal constant taking care of the difference between the functions.
%     \fixme{reread}
  For $\eps$ sufficiently large, $R_{\alpha_n}$ is a hyperbolic periodic point of $\cR$ due to Theorem \ref{th:infcont}. So it has an analytic local stable submanifold $$\cV_n:= W^s_\text{loc}(R_{\alpha_n})$$
  of codimension one (see e.g. \cite{Vand,ElBi} for an infinite-dimensional version of Hadamard-Perron Theorem).
  Now we will construct $\cV_\alpha$ as a limit of $\cV_n$.
    We will prove that submanifolds $\cV_n$ are graphs over one and the same open domain in $V_0$; extracting a converging subsequence for $\alpha_n\to \alpha$, we will find a stable submanifold at $\alpha$ as a limit of $\cV_n$.

% From now on, we require that for $a = \hat\eps/2$, the constant $\kappa$ in the statement of Theorem \ref{th:leafs} satisfies $\kappa< b$. So all curves $\gamma_f$ for all $f\in \cV_n\cap U_{\kappa}(\mathcal T)$ belong to $\Pi_{\hat\eps/2}$.
%  Let us set $\hat\eps=\eps-\frac{1}{2\pi}\Phi(\alpha)-C_0$, as in Theorem~\ref{th-Yoc2}.

\begin{lemma}
\label{lem-graph}
For any $C$, for each  $\eps>c_1 C+c_2$ where $c_1, c_2$ are universal constants,  there exists $\kappa$ such that for any $\alpha, \Phi(\alpha)<C$,  the intersection of any $\cV_n$ with $\kappa$-neighborhood of rotations $U_{\kappa}(\mathcal T)$ is a graph over some subdomain of $V_0$ whenever defined.
\end{lemma}
\begin{proof}
Recall that $\cV_n$ is a codimension-1 analytic submanifold in $\mathcal D_\eps$. Due to Theorem   \ref{th:ourconj}, $\cV_n$ consists of maps that are analytically conjugate to $R_{\alpha_n}$ by conjugacies $\xi_f$.

As above, for a map $f\in\cV_n$, let us denote by $\gamma_f$ its invariant analytic circle containing $0$, and let $\xi_f$ be the analytic linearization with $\xi_f(0)=0$. %\fixme{rev Y rem 19: moved the paragraph here}

Due to Lemma \ref{lem-charts}, for sufficiently large $\eps$ and small $\kappa$,   whenever $f\in \mathcal U_\kappa(\mathcal T)$,  $\xi_f$ is defined on $\Pi_{\hat \eps-0.1}$ and $ \xi_f(\Pi_{\hat \eps-0.1}) \subset \Pi_\eps$. Here  $\hat \eps$ is determined as above, for $\alpha_n$ instead of $\alpha$. Due to \eqref{eq:phibound}, for sufficiently large $\eps$, this implies $|\xi_f'-1|<0.1$ on $\bbR/\bbZ$.

Recall that the tangent space to $\cV_n$ at $R_\alpha$ coincides with  $$\{v\in T\mathcal D_\eps \mid \int_{\bbR/\bbZ} v dz =0\}.$$ Similarly, the tangent space to $\cV_n$ at $f$ may only contain vector fields $$\{v\in T\mathcal D_\eps \mid \int_{\bbR/\bbZ} v d(\xi_f^{-1}) =0\};$$ since it must be codimension 1, it coincides with this space. Note that this tangent space never intersects the open cone $$\{v \in T\mathcal D_\eps \mid \sup_{\Pi_{\eps}}|v(z)  - v(0)| < 0.1 |v(0)| \}$$ around the complex space generated by a vector field $d/dz$; this follows from the estimate on $\xi_f'$.  Thus $\cV_n$ projects to the space $V_0$ along the vector field $d/dz$.%\fixme{comm 10 rev X}

% Since $\phi_f$ moves holomorphically with $f\in\cV_n$, so does the unstable direction of $\cR$ at $f$, given by
%   $\phi_f'(x+\alpha_n)\frac{d}{dx}.$ For some $\kappa$, if $f$ is $\kappa$-close to rotations, then $\gamma_f$ is close to $\bbR/\bbZ$, thus $\phi_f$ are close to identity uniformly on $\alpha$. By Yoccoz's Theorem~\ref{th-Yoc2}, there exists $\kappa$ such that all conjugacies $\phi_f$ for $f\in \cV_n \cap U_{\kappa} (\mathcal T)$ are defined in an annulus $\Pi_{\hat\eps}$.

%   WE ALSO NEED TO KNOW ABOUT CURVES HERE

%   Hence all unstable directions lie in a compact cone.
%   Thus for a sufficiently small $\kappa>0$ and large enough $n\geq n_0$, the submanifold $\cV_n$ is a graph  over some subdomain of $V_0$.
\end{proof}

  Consider a point $f$ on a relative boundary of $\cV_n$ (that is, $f$ is in a closure of $\cV_n$, but $\cV_n$ is not a local submanifold at $f$). We will prove that it cannot belong to $\cU_\kappa(\mathcal T)$. This will imply the statement of the theorem.

  \begin{lemma}
  Suppose that for $C$ fixed,  $\eps>c_1 C +c_2$ where $c_1, c_2$ are universal constants, and $\kappa$ is sufficiently small.
   Then for any $\alpha, \Phi(\alpha)<C$, the relative boundary of $\cV_n$ does not intersect $\cU_\kappa(\mathcal T)$ (i.e. it belongs to its boundary).
  \end{lemma}
  \begin{proof}
%   Fix $\kappa$ such that $|f-R_\alpha|<\kappa$ implies $\dist (\gamma_f, \bbR/\bbZ)<\eps/5$; it exists due to Lemma \ref{lem-curves}.

    Assume the contrary.  Take $f$ on the relative boundary of $\cV_n$ that belongs to $\mathcal U_\kappa(\mathcal T)$. Take a sequence $g_k \to f$, $g_k \in \cV_n$. Due to Theorem \ref{th:ourconj}, all $g_k$ are analytically conjugate to $R_{\alpha_n}$ via maps $\xi_k$; let $\gamma_k = \xi_k(\bbR/\bbZ)$.

    Recall that $\xi_k$ are defined in $\Pi_{\hat\eps-0.1}$ due to Lemma \ref{lem-charts}.
%     \fixme{$\hat \eps $ that corresponds to  $2C$, not $C$}

%     Recall that $\xi_k$ above $\gamma_k$ was constructed in Theorem \ref{th:ourconj} as the composition of the map  $\Xi^{\pm}_k$ that uniformize domains above and below $\gamma_{g_k}$ and conjugacies from Yoccoz's theorem. Note that each map $(\Xi^{\pm}_k)^{-1}$ is defined in $\Pi_{4\eps/5}$ (because the modulus of the annulus above $\gamma_{g_k}$ is at least $\eps-\eps/5$), and its image belongs to $\Pi_{\eps}$. Thus we may extract a convergent sequence from $(\Xi^{\pm}_k)^{-1}$. We will assume $\Xi_k^{\pm} \to \Xi^{\pm}$.
%     Recall that $\xi_{k}^{-1}$ is defined in $(\Xi_k^{+})^{-1}(\Pi_{2\eps/5}) \cup (\Xi_k^{-})^{-1}(\Pi_{2\eps/5})$ due to Yoccoz theorem. Since $\Xi_{k}^{\pm}$ converge, all $\xi_{k}^{-1}$ are defined in a common strip  close to  $(\Xi^{+})^{-1}(\Pi_{2\eps/5}) \cup (\Xi^{-})^{-1}(\Pi_{2\eps/5})$.
    This enables us to extract a convergent subsequence from $\xi_k^{-1}$ and conclude that $f$ is analytically conjugate to $R_\alpha$  via $\xi$ such that $\xi$ is defined in $\Pi_{\hat \eps-0.1}$.

%     All of $\xi_k^{-1}$ are defined in $\Pi_{\eps/2}$ due to Lemma \ref{lem-charts}. Thus  we can extract a converging subsequence $\xi_{k_n}^{-1} \to \xi^{-1}$, and obtain $f = \xi R_\alpha \xi^{-1}$ in $\Pi_{\eps/2}$ by passing to the limit.

    If $\kappa $ was chosen sufficiently small to satisfy assumptions of  Theorem \ref{th:contraction}, then we get a contradiction, because $\mathcal R^{nk} f$ then converge to $\mathcal R^{nk} R_{\alpha_n} = R_{\alpha_n}$ as $k\to \infty$ geometrically fast, thus $f\in W^s(R_{\alpha_n})$, and is not on a boundary.
% The curves $\gamma_{g_k}$ do not belong to $\Pi_{\eps/2}$, so $dist (\gamma_{g_k}, \bbR/\bbZ)> \eps/2$.
%    We conclude that either $\phi_k(\Pi_{\eps-B(\alpha)})$  do not contain one and the same horizontal strip, or $\phi_k $ are not bounded (then their images also do not contain a horizontal strip due to Grotch argument). \fxfatal{MORE DETAILS}.
%
%    Finally, this means that the curves $\phi_k^{-1}(\bbR/\bbZ)$ tend to the boundary of $\Pi_{\eps-B(\alpha)}$. This implies the statement.
  \end{proof}

Hence all points on the relative boundaries of $\mathcal V_n$ do not belong to $U_{\kappa}(\mathcal T)$.  Hence all  $\mathcal V_n$ are graphs over the same open ball $B_{\kappa}$ in $V_0$. Extracting a convergent subsequence, we find an analytic submanifold $\cV_\alpha\ni R_{\alpha}$ that is a limit of $\cV_n$, and is an analytic graph over $B_\kappa$.

For each $f\in \cV_\alpha\cap U_{\kappa}(\mathcal T)$, there exists a sequence $f_n\in \cV_n, f_n\to f$.
Same arguments as in the previous lemma show that $f$  satisfies assumptions of Theorem~\ref{th:contraction}, and thus lies in $W^s(R_\alpha)$.

% Note that the corresponding conjugacies $\phi_n = \xi_{f_n}^{-1}$ are defined in $\Pi_{\eps/2}$.
%, and their images contain one and the same horizontal strip.
%(otherwise $\gamma_{f_n}$ do not belong to $\Pi_{(\eps-B(\alpha))/2}$ which contradicts Lemma \ref{}).
% Thus we may extract a converging sequence from $\phi_n$. We conclude that

This proves Theorem   \ref{th:leafs}.

  \end{proof}

\subsection{Conjugacies in thin strips}

Here we will prove that if a map is close to $R_\alpha$ and analytically conjugate to it in a thin strip, then it  belongs to $\mathcal V_\alpha$ (thus is conjugate to $R_\alpha$ in a thick strip). This is based on Theorem \ref{th:leafs} and  strengthens Theorem \ref{th:contraction} above.

\begin{lemma}
\label{lem-conj}
In assumptions of Theorem \ref{th:leafs},  let $f$ be  analytically conjugate to  $R_\alpha$ in some open substrip of $\Pi_{\eps/3}$ and sufficiently close to $R_{\alpha}$. Then $f\in \mathcal V_\alpha$.
\end{lemma}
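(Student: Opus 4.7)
\emph{Plan.} The strategy is to use Theorem~\ref{th:contraction} to place $f$ into the stable set $W^s(R_\alpha)$, and then invoke the graph structure of $\cV_\alpha$ established in Theorem~\ref{th:leafs}, together with the expansion of $\cR'$ on the unstable direction $V_1$ from Lemma~\ref{lem-expand}, to identify $f$ with its image on $\cV_\alpha$. For the first step, I would write $f = \xi R_\alpha \xi^{-1}$ on some substrip $\Pi_\nu \subset \Pi_{\eps/3}$, normalizing by a shift so that $\xi(0)=0$. Since for a Brjuno rotation the conjugacy of $f$ to $R_\alpha$ is unique up to post-composition with a rotation, continuous dependence of the normalized conjugacy on $f$ yields $\|\xi - \mathrm{id}\|_{C^1(\Pi_{\nu/2})} \to 0$ as $f \to R_\alpha$ in $\cD_\eps$. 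In particular $\xi(\Pi_{\nu/2}) \supset B_{\nu/4}(0)$ once $f$ is close enough to $R_\alpha$, so the hypotheses of Theorem~\ref{th:contraction} hold with $(\nu/2, \nu/4)$ in place of $(\nu,\mu)$. Consequently $\cR^n f$ is defined for all $n$ and $\dist(\cR^n f, \cR^n R_\alpha) \to 0$ geometrically, which says $f \in W^s(R_\alpha)$.

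For the second step, Theorem~\ref{th:leafs} realizes $\cV_\alpha$ as an analytic graph over some open ball $B_\kappa \subset V_0$, with the projection along the one-complex-dimensional unstable direction $V_1$. When $f$ is sufficiently close to $R_\alpha$, the zero-average part $v_0$ of $f-R_\alpha$ lies in $B_\kappa$, so there is a unique $\tilde f \in \cV_\alpha$ with the same $V_0$-projection as $f$, and the difference $d := f - \tilde f$ is a complex constant in $V_1$. Both $f$ and $\tilde f$ lie in $W^s(R_\alpha) \cap U_\kappa(R_\alpha)$ ($f$ by Step~1, $\tilde f$ by construction of $\cV_\alpha$), hence $\|\cR^n f - \cR^n \tilde f\| \to 0$. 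On the other hand, Lemma~\ref{lem-expand} applied at each $R_{\alpha_k}$ shows that $\cR'|_{R_{\alpha_k}}$ preserves $V_1$ and expands it by a factor $\ge l_k^{-2} \ge 10^4$; linearizing $\cR^n(f)-\cR^n(\tilde f)$ along the orbit $R_\alpha \to R_{\alpha_1} \to \cdots$, the $V_1$-component of the linearized difference has norm at least $(10^4)^n |d|$, incompatible with the vanishing of the actual difference unless $d=0$. This forces $f = \tilde f \in \cV_\alpha$.

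The principal technical obstacle is making the last linearization rigorous along the non-periodic orbit $\{R_{\alpha_k}\}$: one has to bound the nonlinear remainder terms uniformly while coupling the uniform expansion on $V_1$ with the uniform contraction on $V_0$ from Theorem~\ref{th:infcont}. The content of this step is a Hadamard--Perron style local-stable-manifold uniqueness statement (in the spirit of the infinite-dimensional stable manifold theorems \cite{Vand, ElBi} cited in the proof of Theorem~\ref{th:leafs}): any nonzero $V_1$-component of $f - \tilde f$ would be amplified exponentially by the linearized dynamics and so eject the orbit of $f$ from every fixed neighborhood of the orbit of $R_\alpha$, contradicting $f \in W^s(R_\alpha)$.
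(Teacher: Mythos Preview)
Your approach differs fundamentally from the paper's, and Step~2 in particular has a genuine gap that you yourself flag as the ``principal technical obstacle''.

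The paper never shows $f\in W^s(R_\alpha)$ at all. Instead: using that $\cV_\alpha$ is a graph over $V_0$, let $g=f+\lambda\in\cV_\alpha$ be the point with the same $V_0$-projection as $f$, and suppose $\lambda\neq 0$. Now $g$ --- not $f$ --- has a linearizing chart $\xi_g$ on $\Pi_{0.4\eps}$ with $|\xi_g'-1|<0.05$ on $\Pi_{\eps/3}$, by Lemma~\ref{lem-charts}. Set $G_t:=\xi_g\circ(f+t)\circ\xi_g^{-1}$; then $G_\lambda=R_\alpha$ and $|\partial_t G_t-1|<0.1$, so $\dist_{C(\Pi_{\eps/3})}(G_0,R_{\alpha-\lambda})<0.1|\lambda|$. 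But $G_0$ is conjugate via $\xi_g$ to $f$, hence analytically conjugate to $R_\alpha$ on a substrip of $\Pi_{\eps/3}$; an elementary averaged-displacement (rotation number) argument then forces $\lambda=0$. All quantitative input comes from $\xi_g$, so the closeness threshold is independent of the unspecified substrip on which $f$ is linearizable.

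Regarding your argument: in Step~1 the closeness required by Theorem~\ref{th:contraction} depends on the fixed parameters $\nu,\mu$, hence on the width of the substrip, so at best you obtain a substrip-dependent threshold; the asserted convergence $\xi\to\mathrm{id}$ as $f\to R_\alpha$ also needs a normal-family justification that you do not supply. More seriously, Step~2 asks for local uniqueness of the stable manifold along the \emph{non-periodic} orbit $\{\cR^k R_\alpha\}$. The references you invoke treat fixed or periodic points; passing to a non-autonomous sequence of base points requires uniform cone-field or graph-transform estimates that are nowhere established --- indeed, the paper explicitly sidesteps this in Theorem~\ref{th:leafs} by approximating $\alpha$ with periodic $\alpha_n$ and taking limits. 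Your linearized bound on $V_1$ does not by itself control the nonlinear difference $\cR^n f-\cR^n\tilde f$, since each application of $\cR$ reinjects a $V_0$-component which feeds back into $V_1$ at the next step. This can in principle be made rigorous, but it is substantial hyperbolic-dynamics machinery, whereas the paper's argument is a few lines.
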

  The proof is contained in \cite[p.91-92]{Risler}, but we include it for the sake of completeness.
\begin{proof}
 Recall that Theorem \ref{th:leafs} shows that $\mathcal V_\alpha$ is an analytic graph over a $\kappa$-ball in the hyperplane $V_0$ where $\kappa$ only depends on $\Phi(\alpha)$ and $\eps$. Thus if $f$ is $\kappa$-close to $R_\alpha$, then  there exists $g=f+\lambda \in  \mathcal V_\alpha$, $\lambda\neq 0$. Consider the family $f+t$ in the chart $\xi_g = \xi_{f+\lambda}$: take
$$G_t := \xi_g \circ (f+t)\circ \xi_g^{-1},$$ then we have $G_\lambda=R_\alpha$, and $G_0$ is conjugate to $f$.

However, due to Lemma \ref{lem-charts}, $\xi_g$ is defined in $\Pi_{\hat \eps-0.1}$; for sufficiently large $\eps$, this implies $$|\xi_g' -1|<0.05\text{ in }\Pi_{\eps/3},$$ thus $$\left|\frac {\partial}{\partial t} G_t -1\right| <0.1\text{ in }\Pi_{\eps/3}.$$ Since $G_\lambda =R_\alpha$, we have $$\dist ( G_0, R_{\alpha-\lambda}) < 0.1 |\lambda| \text{ in }\Pi_{\eps/3}.$$ Clearly, this implies that $G_0$ cannot be conjugate to $R_\alpha$ in any substrip of $\Pi_{\eps/3}$.
The contradiction shows that $f\in \mathcal V_\alpha$.
% \fixme{We can do $\eps/2$ instead $\eps/3$ here}
\end{proof}

\section{Proof of the Main Theorem}
Items 1 and 2 of the Main Theorem (renormalization operator as a real-symmetric complex-analytic operator near $\mathcal T$ with compact derivative) follow from Lemma \ref{lem-domain2}.
% Uniform hyperbolicity for $\alpha\in \mathcal B_C$ and sufficiently large $\eps$ follows from Theorem \ref{th:infcont}.

Unstable direction is described in Lemma \ref{lem-expand}.
Theorem \ref{th:leafs} shows that for $\eps>c_1 C +c_2$ where $c_1, c_2$ are universal constants, the local stable manifold $\mathcal V_\alpha$ at $R_\alpha, \alpha\in \mathcal B_C$, is an analytic  graph over a hyperplane.   Theorem \ref{th:ourconj} together with Lemma \ref{lem-charts}  shows that a germ of $\mathcal V_\alpha$ only contains diffeomorphisms that are analytically conjugate to $R_\alpha$, and the conjugacy is defined on  $\Pi_{\tilde\eps-0.1}$, which contains $\Pi_{0.4\eps}$ for large $\eps$.
Due to Lemma \ref{lem-conj}, if $f$ is analytically conjugate to $R_\alpha$ on a substrip of $\Pi_{\eps/3}$ and sufficiently close to $R_\alpha$, then $f \in \mathcal V_\alpha$.

This concludes the proof of the Main Theorem.
\section{Risler's theorem}
\label{sec-conclusion}
Let us explain how Theorem \ref{th-submanif}  (and thus also Risler's theorem) follows from the Main Theorem.

 Fix $\eps$ and a sequence $\{s_k\}$. Using Proposition \ref{prop-Yoc}, find  $m=m(\eps, \{s_k\})$ such that for any $\alpha\in  \mathcal B_{\{s_k\}}$, we have $\tilde \eps = \eps \cdot \{q_m\alpha\} \gg c_1 \Phi(\alpha_m)+c_2$ where $c_1, c_2$ are as in the statement of the Main Theorem.

%  Indeed, this proposition  guarantees that the proportion $\tilde \eps : \Phi(\alpha_n)$ can be made arbitrarily large and the order $K$ of the required renormalization only depends on $ \{s_k\}, \eps$.

% Since the map $\Phi_K$ used in this renormalization is uniformly close to linear,
Moreover, the Proposition provides us with $\kappa'$ such that if $\rot (f) = \alpha\in\mathcal B_{\{s_k\}}$ and $f$ is $\kappa'$-close to rotations, then $\mathcal R_{\tilde\eps, q_m} f$ is $\kappa$-close to rotations in $\mathcal D_{\tilde \eps -\eps'}$ where $\eps'$ is small and $\kappa$ is as in the Main Theorem.

% Let the assumption of Theorem \ref{th:leafs} hold true for all $\eps>\tilde \eps$, $\tilde \eps = \tilde \eps(C)$. We also assume $\tilde \eps \gg \Phi(\alpha)$.

% Consider the renormalization operator $\tilde {\mathcal R }$ that takes $ \mathcal D_\eps$ to $\mathcal D_{\tilde \eps}$;  to construct it, we apply the sequence of operators $\mathcal R \colon \mathcal D_{1.5^m\eps} \mapsto \mathcal D_{1.5^{m+1}\eps}$ constructed in Lemma \ref{lem-domain2} for $m=0,1, 2, \dots$. Let $\tilde {\mathcal R}(R_\alpha) = R_{\beta}$.

Let an analytic submanifold $\mathcal W_\alpha\subset \mathcal D_\eps$ at $R_\alpha$ be formed by maps such that $\mathcal R_{\tilde\eps, q_m} f \in \mathcal V_{\alpha_m} \subset \mathcal D_{\tilde \eps-\eps'}$. Due to Lemma \ref{lem-charts}, all  $g \in \mathcal V_{\alpha_m}$ that are $\kappa$-close to $R_{\alpha_m}$ are analytically conjugate to $R_{\alpha_m}$ in the strip of width $$\tilde \eps-\eps'-\Phi(\alpha_m)-C_0-0.1 \gg 0.7 \tilde \eps;$$ for sufficiently large $\tilde \eps$, this implies
$$\|\xi_g'-1\|_{C(\Pi_{0.7\tilde  \eps})}<0.1.$$ Therefore, $f\in \mathcal W_\alpha\cap \mathcal U_{\kappa'}(R_\alpha)$ implies that $f$ is analytically conjugate to $R_\alpha$ in the strip formed by images of $\Psi^{-1}(\Pi_{0.7\tilde \eps})$ under $f$. Since the chart $\Psi$  is close to linear for $f$ close to rotations, this strip contains $\Pi_{0.6\eps}$, and the conjugacy $\xi_f$ satisfies $$\|\xi_f'-1\|_{C(\Pi_{0.6\eps})}<0.2.$$ So $f\in \mathcal W_\alpha\cap \mathcal U_{\kappa'}(\mathcal T)$ implies that the conjugacy is in $\cD_{0.6\eps}$.

The converse is proved in the same way as in Lemma \ref{lem-conj}. This concludes the proof of Theorem \ref{th-submanif}.

Now let us  prove the real-symmetric version, Corollary~\ref{cor-submanif1}. Clearly,  the set of circle diffeomorphisms with fixed rotation number $\alpha\in \mathcal B$ coincides with the  graph of a continuous map $$f\mapsto a: \rot(f+a)=\alpha .$$ The Main Theorem shows that the germ of an  analytic submanifold $\mathcal V_\alpha^{\bbR}$ at $R_\alpha$ is formed solely by circle diffeomorphisms.  Hence this germ coincides with the germ of the set of diffeomorphisms with rotation number $\alpha$. %\fixme{rev Y comm 2 (germ).}

As for Corollary~\ref{cor-tongue}, Yoccoz's Theorem~\ref{th-Yoc1} implies that there exists $\delta_{\{s_k\}}$ such that if $\alpha\in\cB_{\{s_k\}}$ and $a\in[0,\delta_{\{s_k\}})$, for every $(\mu,a)\in\mathbb A_\alpha$ the analytic diffeomorphism of the circle  $F_{\mu,a}$ is conformally conjugate to the rigid rotation $R_\alpha$. Theorem~\ref{th:contraction} implies that $$\{F_{\mu,a}\;|\;(\mu,a)\in\mathbb A_\alpha\}= W^s(R_\alpha)\cap \{F_{\mu,a}\;|\;\mu\in[0,1),\;a\in[0,\delta_{\{s_k\}})\},$$
and the claim readily follows.
Corollary~\ref{cor-disk} is similarly straightforward.

\bibliographystyle{amsalpha}
\bibliography{biblio}

 \end{document}